\documentclass{m2an}
%
\usepackage{etoolbox,tikz,pgfplots,array,hyperref,amssymb,thmtools,xcolor,mathtools,enumitem,bm}
\hypersetup{
    colorlinks=true,
    citecolor=blue,
    linkcolor=blue
    }
\pdfpagesattr{ /CropBox [0 40 555 760] }
\usepackage[a4,center]{crop}
\pgfplotsset{compat=1.17}
\usepackage[capitalise,nameinlink,noabbrev]{cleveref}

\newtheorem{theorem}{Theorem}[section]

\newtheorem{lemma}[theorem]{Lemma}

\newtheorem{assumption}[theorem]{Assumption}
\newtheorem{problem}[theorem]{Problem}
\newtheorem{experiment}[theorem]{Experiment}

\raggedbottom

\numberwithin{equation}{section}
\crefname{subsection}{Subsection}{Subsections}
\crefformat{equation}{(#2#1#3)}
\crefformat{align}{(#2#1#3)}
\crefformat{enumi}{(#2#1#3)}
\crefname{figure}{Figure}{Figures}
\newcommand{\ddt}{\frac{\dd}{\dd\mathrm{t}}}

\crefname{theorem}{Theorem}{Theorems}
\crefname{definition}{Definition}{Definitions}
\crefname{lemma}{Lemma}{Lemmas}
\crefname{assumption}{Assumption}{Assumptions}
\crefname{problem}{Problem}{Problems}
\crefname{experiment}{Experiment}{Experiments}

\def\phm{\phantom{-}}
\newcommand{\pt}{\partial_t}
\newcommand{\eps}{\varepsilon}

\newcommand{\ds}{\,\textup{d}s}
\newcommand{\dx}{\,\textup{d}x}
\newcommand{\dy}{\,\textup{d}y}
\newcommand{\dt}{\,\textup{d}t}

\newcommand{\dd}{\mathop{}\!\mathrm{d}}

\renewcommand{\ddt}{\frac{\dd}{\dd\mathrm{t}}}
\renewcommand{\dt}{\,\textup{d}t}
\newcommand{\con}{\hookrightarrow}
\newcommand{\longweak}{\relbar\joinrel\rightharpoonup}
\newcommand{\com}{\mathrel{\mathrlap{{\mspace{4mu}\lhook}}{\hookrightarrow}}}
\newcommand{\ifs}{{\text{if }}}

\newcommand{\bbR}{\mathbb{R}}

\renewcommand{\div}{\operatorname{div}}
\renewcommand{\epsilon}{\varepsilon}
\def\norm #1{\left\|#1\right\|}
\let\originalleft\left
\let\originalright\right
\renewcommand{\left}{\mathopen{}\mathclose\bgroup\originalleft}
\renewcommand{\right}{\aftergroup\egroup\originalright}

\renewcommand{\rho}{\varrho}

\newcommand{\R}{\bbR}
\renewcommand{\tilde}{\widetilde}

\newcommand{\N}{\mathbb{N}}
\newcommand{\ring}{\mathring}
\newcommand{\inv}{(-\Delta)^{-1}}
\newcommand{\p}{\partial}
\renewcommand{\bar}[1]{\mkern 1.5mu\overline{\mkern-1.5mu#1\mkern-1.5mu}\mkern 1.5mu}

\def\Th{\mathcal{T}_h}
\def\Itau{\mathcal{I}_\tau}
\def\Vh{\mathcal{V}_h}
\def\Vhm{\tilde{\mathcal{V}}_h}
\def\dtau{d^{n+1}_\tau}
\usepackage[misc]{ifsym}
\mathcode`\;="603B
\mathcode`\,="613B
\newcolumntype{M}[1]{>{\centering\arraybackslash}m{#1}}
\allowdisplaybreaks
\begin{document}
\title{Analysis and discretization of the Ohta--Kawasaki equation with forcing and degenerate mobility}
\author{Aaron Brunk}\address{Institute of Mathematics, Johannes Gutenberg University, Mainz, Germany}
\author{Marvin Fritz}\address{Computational Methods for PDEs, Johann Radon Institute for Computational and Applied Mathematics, Linz, Austria}
%
%
\begin{abstract} The Ohta--Kawasaki equation models the mesoscopic phase separation of immiscible polymer chains that form diblock copolymers, with applications in directed self-assembly for lithography. We perform a mathematical analysis of this model under degenerate mobility and an external force, proving the existence of weak solutions via an approximation scheme for the mobility function. Additionally, we propose a fully discrete scheme for the system and demonstrate the existence and uniqueness of its discrete solution, showing that it inherits essential structural-preserving properties. Finally, we conduct numerical experiments to compare the Ohta--Kawasaki system with the classical Cahn--Hilliard model, highlighting the impact of the repulsion parameter on the phase separation dynamics. \end{abstract}
\subjclass{35A01, 35A02, 35D30, 35Q92.}
\date{\hphantom{.}}
\keywords{Ohta--Kawasaki equation; Nonlocal Cahn--Hilliard equation; Existence of weak solutions; Galerkin approximation; Fully discrete scheme; Structure-preserving method}
\mbox{}\vspace{-1cm}

\maketitle

\section{Introduction}
Diblock copolymers are linear polymer chains made up of two bonded diblocks of mutually repellent polymer species.
At lower temperatures, a combination of such diblock copolymers tends to separate into discrete phases because of their repulsion.
However, due to the connection between the two polymer diblocks, phase separation occurs on the polymer length scales, resulting in a process known as microphase separation \cite{leibler1980theory}.
Depending on the composition of the diblock copolymer, this results in the development of periodic nanoscale patterns in the form of lamellae, spheres, cylinders, and gyroids; see \cite{abetz2005phase,bates2000block,hamley1998physics}. 
Directed self-assembly of diblock copolymers is one of the most promising advancements in the fabrication of nanoscale devices at an economical price. 
Such self-assembly procedures are researched in the semiconductor industry as a promising means of expanding the capabilities of standard lithographic techniques to build more fine-scale patterned structures for electronic circuits; see \cite{chen2020directed}.
In addition, it is scalable and cost-effective for mass production and has been used in the production of photonic metamaterials \cite{stefik2015block}, membranes for viral filtering \cite{yang2008virus}, and functional materials \cite{kim2010functional}.

In this study, the Ohta--Kawasaki model for the microphase separation of diblock copolymers is investigated.
First presented in \cite{ohta1986equilibrium}, the Ohta--Kawasaki model is part of the density functional theory derived from the self-consistent field theory model. Moreover, it belongs to a class of nonlocal Cahn--Hilliard models and is well accepted in the scientific literature, see \cite{Choksi2009,Choksi2011,Choksi2012,qin2013evolutionary,Fredrickson2005,Cao2022,muller2018continuum,ohta1986equilibrium,Choksi2003,fritz2022time,zou2022fully}. While the Ohta--Kawasaki model introduces a nonlocal term in the free energy to capture repulsive interactions, other nonlocal Cahn--Hilliard models frequently incorporate nonlocal diffusion for long-range interactions in place of standard diffusion, a feature particularly relevant in applications such as cell biology; see, e.g., \cite{fritz2018unsteady,fritz2023tumor}. Furthermore, there are also variants of the Cahn--Hilliard model that are nonlocal-in-time in contrast to the nonlocal-in-space Ohta--Kawasaki model, see, e.g., \cite{fritz2021equivalence}. 

Numerous investigations, see \cite{van_den_Berg_2017,Choksi2012,Choksi2009,Choksi2011}, have shown that the Ohta--Kawasaki model successfully portrays periodic patterns of the diblock copolymer microphase separation that correspond to those found in experiments. In the previous works, it has been exploited that in the case of a constant mobility, the system can be transformed into the Cahn--Hilliard equation with an additional term on the right-hand side. However, it was stated \cite{xia2007local,gomez2011provably} that degenerate mobility in phase-field models can more accurately describe the physics of phase separation, as pure phases must have vanishing mobility. In this work, we study the mathematical analysis of the Ohta--Kawasaki model with degenerating mobilities. In addition, external forces are allowed our analysis, which has also not been done for the Ohta--Kawasaki model, see \cite{Ebenbeck21} for the Cahn-Hilliard case with one-sided mobility. As the external force disrupts the mass conservation of the model, the non-locality becomes more difficult, especially for the numerical discretization of the model.

In \cref{Sec:Mod}, we introduce the Ohta--Kawasaki model and derive it from an underlying energy formulation. In addition, we introduce the functionals and parameters involved.
In \cref{Sec:Analysis}, we present the complete analysis of the model. We begin in \cref{Sec:Pre} with various well-known analytical results that we require in the following subsections to prove the existence of weak solutions.
In \cref{Sec:Nondeg}, we prove the existence of solutions in the non-degenerate case by using the Galerkin method and discretizing the PDE in space. We derive an energy result and pass to the limit in the formulation.
In \cref{Sec:Deg}, we consider the case of degenerate mobility and introduce a further regularization of the PDE. In \cref{Sec:Num}, we present a fully discrete scheme for the Ohta--Kawasaki system. We establish the existence and uniqueness of a discrete solution that preserves key structural quantities of the model, such as mass balance and energy dissipation. We show several simulations to highlight the microphase separation in \cref{Sec:NumIll} when comparing different values of the crucial repulsion parameter including the case of the standard Cahn--Hilliard equation. Finally, we mention some conclusions to the work in \cref{Sec:Concl}.
\section{Mathematical model } \label{Sec:Mod}

We model the microphase separation of a block copolymer mixture under the influence of a substrate chemical field. In this regard, we consider diblock copolymers, which are block copolymers consisting of two chemically distinct polymer species. We refer to one of the two species as phase $A$ and the other as phase $B$.  Given some bounded domain $\Omega \subset \mathbb{R}^{d}$ in dimension $d\in \{2,3\}$,  the normalized segment densities of the monomers $A$ and $B$ inside the domain are denoted by $\phi_A$,$\phi_B:\Omega\to[0,1]$. They satisfy the incompressibility constraint $\phi_A+\phi_B = 1$ and are governed by the generalized mass conservation equation.
We define their difference $\phi=\phi_A-\phi_B:\Omega \to [-1,1]$ that represents the normalized local density difference between the two phases and, in this sense, the value of $\phi$ at $x \in \Omega$ indicates the local composition of the mixture, taking a value of $\phi(x) = 1$ if locally the mixture is purely phase $A$, $\phi(x) = -1$  for purely phase $B$, and $\phi(x) \in  (-1,1)$ when interpolating between the pure phases. 

Based on a microforce balance \cite{gurtin1996generalized}, the phase-field $\phi$ is governed by the partial differential equation
\begin{equation} \label{Eq:Mass}
\pt \phi = \div(m(\phi) \nabla\mu) + f(\phi),\end{equation}
where $\mu$ is given as the G\^ateaux derivative of the underlying energy $\mathcal{E}$, $m$ a mobility definition depending on the phase field $\phi$ such as $m(\phi)=\frac14 (1-\phi^2)^2$, and $f$ a forcing that may depend on $\phi$.
The Ohta--Kawasaki energy functional $\mathcal{E}$ accounts for the effects of the substrate chemical and is defined as, see \cite{ohta1986equilibrium},
	\begin{equation} 
 \label{Eq:Energy}
 \mathcal{E}(\phi)=\int_\Omega \Psi(\phi) + \frac{\eps^2}{2} |\nabla \phi|^2 +\frac{\kappa}{2} (\phi-\bar\phi)(-\Delta)^{-1} (\phi-\bar\phi) \, \dd x,
	\end{equation}
where $\Psi$ is a double-barrier or double-well potential such as $\Psi(\phi)=\frac14 (1-\phi^2)^2$. The parameter $\varepsilon$ describes the strength of repulsion between the two polymer phases, while $\kappa$ describes the strength of the nonlocal effect provided by the bonding of the polymer chain. In particular, $\kappa$ measures the strength of the repulsion between bubbles and favours small domains, therefore, the larger $\kappa$ is, the more bubbles the assembly has. The mass average $$\bar\phi:=\frac{1}{|\Omega|} \int_\Omega \phi(x) \dx$$ represents the ratio between the masses of the two polymer phases in the overall mixture. 

As the chemical potential $\mu$ is defined as the first variation of the Ohta--Kawasaki energy \eqref{Eq:Energy} with respect to $\phi$, we calculate its derivative as follows:
$$\begin{aligned}
\int_\Omega \mu \xi \dx &=\frac{\dd}{\dd\theta}\bigg\vert_{\theta=0}  \int_\Omega \! \Psi(\phi+\theta \xi) + \frac{\eps^2}{2} |\nabla(\phi+\theta \xi)|^2  + \frac{\kappa}{2} (\phi+\theta\xi-\bar{\phi+\theta\xi})(-\Delta)^{-1} (\phi+\theta \xi-\bar{\phi+\theta\xi}) \dx 
\\ &= \int_\Omega \! \big( \Psi'(\phi) - \eps^2 \Delta \phi\big) \xi  +\kappa (\xi-\bar\xi) (-\Delta)^{-1} (\phi -\bar\phi) \big) \dx \end{aligned} $$
for any test function $\xi \in C_0^\infty(\Omega)$. By the fundamental lemma of calculus of variations, we obtain the chemical potential $\mu$ as follows:
$$\mu = \Psi'(\phi) - \eps^2 \Delta \phi + \kappa \inv(\phi-\bar\phi) - \kappa \bar{\inv(\phi-\bar\phi)}$$ 
As the inverse Laplacian of a mean-free function is again mean-free, the last term is zero.
Consequently, it yields  the Ohta--Kawasaki system 
\begin{equation} \label{Eq:Ohta}
	\begin{aligned}
	\p_t\phi &= \div( m(\phi) \nabla\mu) + f(\phi), \\ 
	\mu &= \Psi'(\phi) - \eps^2 \Delta \phi + \kappa \inv(\phi-\bar\phi).
	\end{aligned}
\end{equation}
As it is favourable for our analysis, we introduce a further variable $\nu$ to the system and seek the triple $(\phi,\mu,\nu)$ such that it holds
\begin{equation} \label{Eq:Ohtaequiv}
	\begin{aligned}
	\p_t\phi &= \div( m(\phi) \nabla\mu) + f(\phi),\\
	\mu &= \Psi'(\phi) - \eps^2 \Delta \phi + \kappa \nu,\\
 	-\Delta \nu &= \phi - \bar\phi,
	\end{aligned}
\end{equation}
and we equip the system with the homogeneous Neumann boundary data $\partial_n \phi = \partial_n \mu=\partial_n \nu = 0$ on $\p\Omega$. \medskip

\noindent\textbf{Special case 1:} If one assumes a constant mobility function, wlog $m(\phi)=1$, \eqref{Eq:Ohta} reduces to the simplified system
\begin{equation}\label{Eq:CHOhta}
	\begin{aligned}
	\p_t\phi &= \Delta\mu  -\kappa (\phi-\bar\phi)+f(\phi),\\
	\mu &= \Psi'(\phi) - \eps^2 \Delta \phi,
	\end{aligned}
\end{equation}
i.e., the nonlocal term appears as a source function in the first equation of the Cahn--Hilliard system. \medskip

\noindent\textbf{Special case 2:} If we assume in addition that there is no external force, that is $f=0$, or that the force is mean-free, that is $\bar f=0$, then we observe that the system is mass conserving by integrating the first equation is space. Thus, we have $\bar\phi=\bar\phi_0$ with $\phi_0=\phi(0)$ being the phase field's initial. We conclude that the model becomes local-in-space.

\section{Analysis} \label{Sec:Analysis}

We prove the existence of weak solutions using the Galerkin method and compactness theorems; see \cref{Thm:PosMob} below for the statement of the result. Moreover, we prove the uniqueness and continuous dependence on the data in the case of constant mobility. In both cases, we assume a potential with certain properties that are fulfilled by the standard double-well potential. In \cref{Thm:Reg} in \cref{Sub:Reg} below, we prove higher regularity results of the solutions. For more general cases, for example, the degenerate mobility $m(x)=(1-x^2)^2$ and potentials of double-obstacle and Flory--Huggins type, we refer to \cref{Thm:DegMob} in \cref{Sec:Deg} below.

\subsection{Preliminaries} \label{Sec:Pre}
We denote a generic constant simply by $C>0$ and for brevity, we may write $x \lesssim y$ instead of $x \leq Cy$.
We recall the Poincar\'e--Wirtinger inequality \cite{brezis2010functional} 
\begin{equation} \begin{aligned}
		\Vert  u-\bar u\Vert _{L^p(\Omega)} & \lesssim \Vert \nabla u\Vert _{L^p(\Omega)} && \forall u \in W^{1,p}(\Omega).
	\end{aligned} \label{Eq:SobolevInequality} \end{equation}
Further, we introduce the Hilbert--Sobolev space of first order with zero mean by
$$\mathring{H}^1(\Omega)=\{v \in H^1(\Omega): \bar v=0 \},$$
and its dual $\mathring{H}^{-1}(\Omega)$ can be naturally extended to a subspace of $H^{-1}(\Omega):=(H^1(\Omega))'$ by taking the constant $H^1(\Omega)$-function as a kernel, i.e.,
$$\mathring{H}^{-1}(\Omega)=\{f \in H^{-1}(\Omega): \langle f,v \rangle_{H^1(\Omega)} = 0 \quad\forall v\in H^1(\Omega) \text{ with } v=\text{const a.e.}\},$$
where $\langle a,b\rangle_X$ denotes the dual paring between $X'\times X$.
We define the inverse Laplace operator $\inv : \ring H^{-1}(\Omega) \to \ring H^1(\Omega)$ by
$$(\nabla\inv f,\nabla v)_{L^2(\Omega)}=\langle f,v\rangle_{H^1(\Omega)} \quad \forall f \in \ring H^{-1}(\Omega), v\in H^1(\Omega),$$
which is linear, bounded, bijective, and self-adjoint. We equip the mean-free space $\ring H^{-1}(\Omega)$ with the graph norm $\|\cdot\|_*=\|\nabla(-\Delta)^{-1} \cdot \|_{L^2(\Omega)}$ and the scalar product 
$$\begin{aligned}(f,g)_*&=((-\Delta)^{-1} f,(-\Delta)^{-1} g)_{H^1(\Omega)}=(\nabla(-\Delta)^{-1} f,\nabla(-\Delta)^{-1} g)_{L^2(\Omega)}=(f,(-\Delta)^{-1} g)_{L^2(\Omega)}.\end{aligned}$$

For notational simplicity, we define the spaces $H, \mathring{H}$ and $V, \mathring{V}$ in the Gelfand triples
\begin{align*}
 V=H^1(\Omega)&\hookrightarrow  \hookrightarrow H=L^2(\Omega) \hookrightarrow V', \\
\mathring{V}=\mathring{H}^1(\Omega)&\hookrightarrow  \hookrightarrow \mathring{H}=\mathring{L}^2(\Omega) \hookrightarrow \mathring{V}'.
\end{align*}

\subsection{Nondegenerating case} \label{Sec:Nondeg} First, we consider the case of positive and bounded mobility, e.g., the continuous function $$m(x)=\delta+\beta \chi_{[-1,1]}(x) (1-x^2)^2,$$ for $\delta,\beta>0$.  Here, $\chi$ denotes the characteristic function and $\beta=0$ gives constant mobility $m(x)=\delta$.
We make the following assumption regarding the well-posedness theorem below. We want to highlight that the dimension $d$ of the spatial domain can be arbitrarily high.
\begin{assumption} \label{As:Pos} ~\\[-0.3cm]
\begin{enumerate}[label=(A\arabic*), ref=A\arabic*, leftmargin=.9cm] \itemsep.1em
    \item $\Omega \subset \R^d$ bounded $C^{1,1}$-domain with $d \geq 1$, $T>0$ finite time horizon, $Q_T=(0,T)\times \Omega$.
    \item $\phi_0 \in V$.
    \item $m \in C^0(\R)$ such that $0< m_0 \leq m(x) \leq m_\infty$ for all $x \in \R$ for some constants $m_0,m_\infty<\infty$. \label{Ass:Pos:Mob}
    \item $\Psi \in C^{1,1}(\R;\R_{\geq 0})$ such that $\Psi(0)=\Psi'(0)=0$, $\Psi''(x) \geq -C_\Psi$, and $|\Psi'(x)|\leq C_\Psi' (1+\Psi(x))$ for all $x \in \R$ for some $C_\Psi,C_\Psi' < \infty$.
    \label{Ass:Pos:Pot}
    \item $f \in C^{0}(\R;\R_{\geq 0})\cap L^\infty(\R)$. \label{Ass:Pos:Fun}
    \item $\eps,\kappa$ are positive constants. \label{Ass:Pos:param}
\end{enumerate}
\end{assumption}

We state the existence and uniqueness theorem as follows:

\begin{theorem} \label{Thm:PosMob} Let \cref{As:Pos} hold. Then there exists a weak solution $(\phi,\mu,\nu)$ with 
	$$\begin{aligned}
	\phi &\in H^1(0,T;V') \cap C([0,T];H) \cap L^\infty(0,T;V),\\
	\mu &\in L^2(0,T;V), \\
    \nu &\in L^\infty(0,T;\mathring{V}),
	\end{aligned}$$
	to the Ohta--Kawasaki equation \eqref{Eq:Ohta} in the sense that $\phi(0)=\phi_0$ in $H$ and
	\begin{equation} \label{Eq:Weak}
	\begin{aligned}
	\langle \pt \phi,\xi_1\rangle_V+ (m(\phi) \nabla \mu,\nabla \xi_1)_H &= (f(\phi),\xi_1)_H, \\
	 (\Psi'(\phi),\xi_2)_H + \eps^2 (\nabla \phi,\nabla \xi_2)_H + \kappa (\nu,\xi_2)_H & =(\mu,\xi_2)_H , \\
    (\nabla \nu, \nabla \xi_3)_H &= (\phi-\bar\phi,\xi_3)_H,
	\end{aligned}
	\end{equation}
	for any $\xi_1,\xi_2 \in V$ and $\xi_3\in\mathring{V}$; the triple satisfies the a priori bounds
	\begin{equation} \label{Eq:EnergySolution} \begin{aligned}
	&\|\phi\|_{L^\infty(V)}^2 +  \|\Psi(\phi)\|_{L^\infty(L^1(\Omega))} + \|\nu\|_{L^\infty(V)}^2 +\|\mu\|_{L^2(V)}^2+\big\|\sqrt{m(\phi)}\nabla \mu\big\|_{L^2(H)}^2 \lesssim \norm{f(\phi)}_{L^2(H)} + \|\phi_0\|_{V}^2.
	\end{aligned} \end{equation}
 Furthermore, the integrated energy inequality holds for almost all $t\in(0,T)$ given by
 \begin{align} \label{Eq:EnergyIneq}
  \mathcal{E}(\phi(t)) + \int_0^t m(\phi(s))|\nabla\mu(s)|^2 \ds \leq \mathcal{E}(\phi(0))  + \int_0^t (f(\phi(s)),\mu(s))_H \ds.
 \end{align}
 If the mobility $m$ is constant and $f=0$, then the solution is unique and depends continuously on the data $\phi_0$.
\end{theorem}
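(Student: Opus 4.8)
The plan is to prove existence by a Galerkin approximation in space, derive uniform a priori bounds from an energy estimate, and then pass to the limit using compactness; uniqueness and continuous dependence are treated separately at the end under the constant-mobility, force-free hypothesis. First I would fix a Galerkin basis $\{w_k\}$ of $V$ (for instance the $L^2$-orthonormal eigenfunctions of the Neumann Laplacian, so that they are also $V$-orthogonal), set $V_n=\mathrm{span}\{w_1,\dots,w_n\}$, and seek $\phi_n(t)=\sum_{k=1}^n c_k^n(t)w_k$ together with $\mu_n,\nu_n\in V_n$ solving the projected version of \eqref{Eq:Weak}. Because $\nu_n$ is recovered from $\phi_n$ by the (invertible, by \ref{Ass:Pos:param}) discrete Neumann--Poisson problem and $\mu_n$ is an explicit function of $\phi_n$ through the second equation, the first equation becomes a system of ODEs for the coefficients $c^n$ with a continuous right-hand side; by the Cauchy--Peano theorem it has a local solution on some interval $[0,T_n)$, and the a priori bounds below extend it to all of $[0,T]$.

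The central step is the energy estimate. Testing the three discrete equations by $\xi_1=\mu_n$, $\xi_2=\pt\phi_n$ and (the time derivative relation for) $\xi_3=\kappa\pt\nu_n$, and adding, the local terms combine into $\ddt\mathcal{E}(\phi_n)$ while the mobility term yields the dissipation $(m(\phi_n)\nabla\mu_n,\nabla\mu_n)_H\ge m_0\|\nabla\mu_n\|_H^2$ by \ref{Ass:Pos:Mob}; the only remaining contribution is the forcing $(f(\phi_n),\mu_n)_H$, giving the discrete analogue of \eqref{Eq:EnergyIneq}. To control the forcing I would split $\mu_n=\bar\mu_n+(\mu_n-\bar\mu_n)$, estimate the mean-free part by Poincar\'e--Wirtinger \eqref{Eq:SobolevInequality} against the dissipation, and bound $\bar\mu_n$ by testing the second equation with the constant $1$ and using \ref{Ass:Pos:Pot} (the growth bound $|\Psi'|\le C_\Psi'(1+\Psi)$) together with $f\in L^\infty$ from \ref{Ass:Pos:Fun}; a Gronwall argument then closes the loop and produces, uniformly in $n$, the bounds on $\|\phi_n\|_{L^\infty(V)}$, $\|\Psi(\phi_n)\|_{L^\infty(L^1)}$, $\|\nu_n\|_{L^\infty(V)}$, $\|\sqrt{m(\phi_n)}\nabla\mu_n\|_{L^2(H)}$, and hence $\|\mu_n\|_{L^2(V)}$, matching \eqref{Eq:EnergySolution}. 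A separate test of the first equation by an arbitrary $\xi_1\in V$ gives a uniform bound on $\|\pt\phi_n\|_{L^2(V')}$, which is what feeds the compactness argument.

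For the limit passage I would extract a subsequence with $\phi_n\rightharpoonup\phi$ weak-$*$ in $L^\infty(0,T;V)$ and weakly in $H^1(0,T;V')$, $\mu_n\rightharpoonup\mu$ weakly in $L^2(0,T;V)$, and $\nu_n\rightharpoonup^*\nu$ weak-$*$ in $L^\infty(0,T;\mathring V)$. The Aubin--Lions--Simon lemma applied to the embeddings $V\con\con H\con V'$ yields strong convergence $\phi_n\to\phi$ in $L^2(0,T;H)$ and pointwise a.e.\ after a further subsequence, whence $\Psi'(\phi_n)\to\Psi'(\phi)$ and $f(\phi_n)\to f(\phi)$ may be identified (using \ref{Ass:Pos:Pot}, \ref{Ass:Pos:Fun} and a generalized dominated convergence / Vitali argument for the $\Psi'$ term with the uniform $L^1$ bound on $\Psi(\phi_n)$). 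The genuinely delicate point is the nonlinear flux $m(\phi_n)\nabla\mu_n$: I would write it as the product of $m(\phi_n)\to m(\phi)$ (strong in every $L^p(Q_T)$, $p<\infty$, by continuity of $m$, boundedness \ref{Ass:Pos:Mob}, and the a.e.\ convergence of $\phi_n$) with $\nabla\mu_n\rightharpoonup\nabla\mu$ (weak in $L^2$), a strong-times-weak pairing that passes to the limit; this identification of the limit flux is the main obstacle and must be done with care so that the test functions, which live in $V_n$, are replaced by dense approximations of general $\xi_1,\xi_2\in V$, $\xi_3\in\mathring V$. The energy inequality \eqref{Eq:EnergyIneq} for the limit then follows from the discrete energy identity by weak lower semicontinuity of $\mathcal{E}$ (convexity of the gradient and nonlocal terms, Fatou for $\int\Psi(\phi)$) and weak convergence of the dissipation, while $\phi\in C([0,T];H)$ and the initial condition $\phi(0)=\phi_0$ come from the Lions--Magenes embedding $L^\infty(0,T;V)\cap H^1(0,T;V')\con C([0,T];H)$. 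Finally, under constant mobility $m\equiv m_0$ and $f=0$ the system is mass-conserving (Special case~2), so the difference of two solutions lies in $\mathring H^{-1}$; testing the difference equations in the $\|\cdot\|_*$ norm, using the monotone decomposition $\Psi'=\Psi'+C_\Psi\,\mathrm{id}-C_\Psi\,\mathrm{id}$ to absorb the non-monotone part via \ref{Ass:Pos:Pot}, and applying Gronwall yields both uniqueness and Lipschitz continuous dependence on $\phi_0$ in the $\mathring H^{-1}$-to-$C(H)$ topology.
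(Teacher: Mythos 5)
Your proposal follows essentially the same route as the paper's proof: a Faedo--Galerkin scheme on the Neumann--Laplace eigenfunctions with Cauchy--Peano, the identical triple of test functions ($\mu_n$, $\pt\phi_n$, and the time-differentiated $\nu$-relation, matching the paper's $w=-\kappa\pt(-\Delta_k)^{-1}(\phi^k-\bar\phi^k)$ up to sign), control of $\bar\mu_n$ by testing with the constant and the growth bound $|\Psi'|\lesssim 1+\Psi$, Gronwall, an $L^2(V')$ bound on $\pt\phi_n$, Aubin--Lions with the strong-times-weak pairing for $m(\phi_n)\nabla\mu_n$, lower semicontinuity and Fatou for the energy inequality, and the $\|\cdot\|_*$-Gronwall argument with semiconvexity of $\Psi$ for uniqueness. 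The only deviations are cosmetic (sign conventions, deriving the $\nu_n$ bound from the energy rather than the paper's separate test $w=\nu^k+(-\Delta_k)^{-1}\nu^k$, and the continuous-dependence topology, which the estimate actually yields in $C([0,T];\mathring H^{-1})\cap L^2(0,T;V)$ rather than $C([0,T];H)$).
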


\noindent \textit{Proof.} We employ the Galerkin method to reduce the system to ODEs, which admit a solution $(\phi^k,\mu^k,\nu^k)$ due to classical existence results such as the theorem of Cauchy--Peano. We derive energy estimates that imply the existence of weakly convergent subsequences. We pass to the limit $k\to \infty$ and apply compactness methods to return to the variational formulation of the Ohta--Kawasaki equation \eqref{Eq:Ohta}. \medskip
	
\noindent\textbf{Step 1: Discrete approximation.} Let $\{h_k\}_{k \in \N}$ be the eigenfunctions of the Neumann--Laplace operator with corresponding eigenvalues $\{\lambda_k\}_{k\in \N}$. The eigenfunctions form an orthonormal basis  in $H$ and an orthogonal basis in $V$ with $(\nabla h_i,\nabla h_j)_H=\lambda_j \delta_{ij}$, see \cite{brezis2010functional}. We choose $h_1=|\Omega|^{-1/2}$ with $\lambda_1=0$. 
	 We pursue functions $(\phi^k,\mu^k,\nu^k)$ which take their values in the finite-dimensional space $H_k=\{h_1,\dots,h_k\}$, i.e., is of the form
	\begin{equation} \label{Eq:Ansatz}
	    \begin{aligned}
\phi^k(t) = \sum_{j=1}^k \phi^k_j(t) h_j, \qquad \mu^k(t) = \sum_{j=1}^k \mu^k_j(t) h_j, \qquad \nu^k(t) = \sum_{j=1}^k \nu^k_j(t) h_j,
	    \end{aligned}
	\end{equation}
	with coefficient functions $\phi_j^k,\mu_j^k, \nu_j^k : (0,T) \to \R$, $j \in \{1,\dots,k\}$, that solve the Galerkin system 
	\begin{equation} \begin{aligned}
	(\p_t \phi^k,u)_H + (m(\phi^k) \nabla \mu^k, \nabla u)_H &= (f(\phi^k),u)_H,  \\
    (\Psi'(\phi^k),v)_H + \eps^2 (\nabla \phi^k, \nabla v)_H +\kappa (\nu^k,v)_H & =(\mu^k,v)_H , \\
    (\nabla \nu^k, \nabla w)_H &= (\phi^k-\bar\phi^k,w)_H,
	\end{aligned} \label{Eq:FaedoUV}
	\end{equation}
	for all $u,v,w \in H_k$. We equip the system with the initial data $\phi^k(0)=\Pi_k \phi_0$ with $\Pi_k: H \to H_k$, $h \mapsto \sum_{i=1}^k (h,h_j)_H h_j$, denoting the orthogonal projection onto $H_k$. In particular, we exploit its key properties $\|\Pi_k\|_{\mathcal{L}(H)}\leq 1$, $\|\Pi_k\|_{\mathcal{L}(V)}\leq 1$ and
	\begin{equation} \label{Eq:Projection}
	\phi^k(0) = \Pi_k \phi_0 \longrightarrow \phi_0 \quad \text{in $V$ as }  k \to \infty,\end{equation}
	e.g., see \cite[Lemma 7.5]{robinson2001infinite}.

	Since the test functions $u,v,w \in H_k$ are spanned by the eigenfunctions $h_j$, $j \in \{1,\dots,k\}$, we can equivalently write the Galerkin system as
		\begin{subequations} \begin{align}
	(\p_t \phi^k,h_j)_H + (m(\phi^k) \nabla \mu^k, \nabla h_j)_H &= (f(\phi^k),h_j)_H, \label[equation]{Eq:FaedoPhi} \\
	(\Psi'(\phi^k),h_j)_H + \eps^2 (\nabla \phi^k, \nabla h_j)_H + \kappa (\nu^k,h_j)_H &= (\mu^k,h_j)_H, \label[equation]{Eq:FaedoMu} \\
  (\nabla \nu^k,\nabla h_j)_H &= (\phi^k-\bar\phi^k,h_j)_H, \label[equa3tion]{Eq:FaedoNu} 
	\end{align} \label[equation]{Eq:Faedo}
	\end{subequations}
	for all $j \in \{1,\dots, k\}$.
     Inserting the ansatz functions \eqref{Eq:Ansatz} into this system  and exploiting the orthonormality of the eigenfunctions in $H$ and their orthogonality in $V$, yields
		\begin{equation} \label{Eq:DiscreteCoeff} \begin{aligned}
	\p_t \phi^k_j &=- \lambda_j  \mu^k_j \sum_{i=1}^k \big(m\big( \textstyle \sum_{j=1}^k \phi^k_j(t) h_j\big) \nabla h_i,\nabla h_j \big)_H + \big(f\big(\textstyle \sum_{j=1}^k \phi^k_j(t) h_j\big),h_j\big)_H,  \\
	\mu^k_j &= \big(\Psi'\big(\textstyle \sum_{j=1}^k \phi^k_j(t) h_j\big),h_j\big)_H + \eps^2 \lambda_j \phi^k_j + \kappa \nu_j^k,  \\
 \lambda_j \nu_j^k &= \phi_j^k-\phi_1^k \delta_{1j},
	\end{aligned} 
	\end{equation}
 for all $j \in \{1,\dots,k\}$, and the initial data $\phi_j^k(0)=(\phi_0,h_j)_H$. Here, we have used that
 $$(\bar \phi^k,h_j)_H=\frac{1}{|\Omega|} (1,\phi^k)_H (1,h_j)_H=(h_1,\phi^k)_H (h_1,h_j)_H=\phi_1^k \delta_{1j}.$$
 We may plug $\nu_j^k$ from \cref{Eq:DiscreteCoeff}$_3$ into \cref{Eq:DiscreteCoeff}$_1$ and $\mu^k_j$ from \cref{Eq:DiscreteCoeff}$_2$ into \cref{Eq:DiscreteCoeff}$_1$. Thus, the right-hand side in \cref{Eq:DiscreteCoeff}$_1$ is continuously dependent on $\phi_1^k,\dots,\phi_k^k$.
By the existence theory of ODEs, there exists a solution $(\phi_j^k,\mu_j^k,\nu_j^k)$ to the ODE on a time interval $[0,T_k)$ with either $T_k=\infty$ or $T_k<\infty$ and $\limsup_{t \to T_k} |(\phi_1^k,\dots,\phi_k^k)|_{\ell^2}=\infty$. 
Therefore, we have shown the existence of a solution tuple $$(\phi^k,\mu^k,\nu^k) \in H^1(0,T_k;H_k)\times L^2(0,T_k;H_k)\times H^1(0,T_k;H_k),$$
	to the Galerkin system \eqref{Eq:Faedo}. \medskip

	\noindent\textbf{Step 2: Energy estimates.} After we have proved the existence of a solution to the ODE, we can begin to test the Galerkin system \eqref{Eq:FaedoUV} with suitable test functions.
	First, we take the test functions $u=\mu^k$, 
 $v=\p_t \phi^k$ and 
 $w=-\kappa\pt (-\Delta_k)^{-1} (\phi^k-\bar\phi^k)$ in \eqref{Eq:FaedoUV}. We note that $w$ is indeed in $H_k$ as we define the inverse Laplacian of a mean-free function $w$ as
\begin{equation}
\langle \nabla w_k, \nabla \xi_k \rangle = \langle w, \xi_k \rangle \text{ for all } \xi_k \in H_k. \smallskip
\end{equation}
Then we find $K\vec w_k = M\vec w$ and 
inverting $K$ gives 
$\vec w_k = K^{-1}\vec w$
and we define  
$w_k = (-\Delta_k)^{-1} w$. Thus, selecting the test functions as written above gives
	\begin{equation*}
	    \begin{aligned}
	    (\partial_t \phi^k,\mu^k)_H  + (m(\phi^k) \nabla \mu^k, \nabla \mu^k)_H&=(f(\phi^k),\mu^k)_H,  \\
	    (\Psi'(\phi^k) ,\p_t \phi^k)_H + \eps^2 (\nabla \phi^k, \nabla \p_t \phi^k)_H+\kappa(\nu^k,\pt \phi^k)_H&=(\mu^k,\p_t \phi^k)_H, \\-\kappa(\nu^k,\pt \phi^k-\pt\bar\phi^k)_H +\kappa(\phi^k-\bar\phi,\pt (\phi^k-\bar\phi^k))_* &=0.
	    \end{aligned}
	\end{equation*}
 It holds
$$|\Omega|\pt \bar\phi^k=\pt (\phi^k,1)_H=(\pt \phi^k,1)_H=(f,1)_H,\smallskip $$
from which we observe $(\nu^k,\pt\bar\phi^k)_H=0$.
Adding the tested equations then yields
\begin{equation} \label{Eq:EEstimate2} 
	\begin{aligned}
&(m(\phi^k),|\nabla \mu^k|^2)_H+\eps^2 (\nabla \phi^k, \p_t
\nabla \phi^k)_H + (\Psi'(\phi^k),\p_t \phi^k)_H    + \kappa (\pt (\phi^k-\bar\phi^k),\phi^k-\bar\phi^k)_{*}  = (f,\mu^k)_H . 
	\end{aligned}
	\end{equation}
By using the test function $v=1$, we observe $$|\Omega|\bar\mu^k=(\mu^k,1)_H = (\Psi'(\phi^k),1)_H.$$ and consequently, the Poincar\'e--Wirtinger inequality  \eqref{Eq:SobolevInequality} gives
	\begin{equation} \label{Eq:MuEstimate}
	\|\mu^k\|_H \leq \big\|\mu^k - \bar\mu^k \big\|_H +\|\bar\mu^k\|_H \lesssim \|\nabla \mu^k\|_H + \|\Psi'(\phi^k)\|_{L^1(\Omega)} .
	\end{equation}
	Due to assumption \cref{Ass:Pos:Pot} we can bound the derivative of the potential by the potential itself, that is,
	\begin{equation}\label{Eq:EEstimate_mu}
	    \|\mu^k\|_H \lesssim 1+\|\nabla \mu^k\|_H+\|\Psi(\phi^k)\|_{L^1(\Omega)}.
	\end{equation}
We use the lower bound of $m$, see \cref{Ass:Pos:Mob}, insert \eqref{Eq:EEstimate_mu} on the right-hand side in \eqref{Eq:EEstimate2} and use the $\eps$-Young inequality to make the prefactors of $\|\nabla \mu^k\|_H$ sufficiently small to absorb them by the terms on the left-hand side of the inequality. This procedure gives an estimate
\begin{equation} \label{Eq:EEstimate1} 
	\begin{aligned}
&\ddt \left[  \frac{\eps^2}{2} \|\nabla \phi^k\|_H^2 + \|\Psi(\phi^k)\|_{L^1(\Omega)} + \frac{\kappa}{2} \|\phi^k-\bar\phi^k\|_*^2 \right] +  \frac{m_0}{2} \|\nabla \mu^k\|_H^2   \lesssim 1+\|f\|_H^2 +  \|\Psi(\phi^k)\|_{L^1(\Omega)}.
	\end{aligned}
	\end{equation}
 Integrating both sides over $(0,t)$, $t \in (0,T_k)$, and applying the 
applying the Gronwall inequality yields
\begin{equation} \label{Eq:FFinal2} \begin{aligned} &\|\nabla \phi^k(t)\|_{H}^2 +  \|\Psi(\phi^k(t))\|_{L^1(\Omega)} +  \|\phi^k(t)-\bar\phi^k(t)\|_*^2  + \|\nabla \mu^k\|_{L^2(0,t;H)}^2 \\     &\lesssim 1+\|f\|_{L^2(H)}^2+\|\nabla \phi^k_0\|_H^2+\|\Psi(\phi_0^k)\|_{L^1(\Omega)}+ \|\phi^k_0-\bar\phi^k_0\|_*^2,
\end{aligned}
\end{equation}
for almost all $t\in (0,T_k)$.  We estimate the last term on the right-hand side as
 $$\|\phi^k_0-\bar\phi^k_0\|_*^2 
 \leq \|\phi^k_0-\bar\phi^k_0\|_H^2 
 \leq 2\|\phi^k_0\|_H^2 + 2(\bar\phi^k_0)^2 |\Omega| \lesssim \|\phi^k_0\|_H^2 \lesssim \|\phi_0\|_H^2,$$
 where we used the boundedness of the projection operator. Likewise, we can bound the third term on the right-hand side and the fourth term follows by the Lipschitz continuity of $\Psi$ and $\Psi(0)=0$, see \cref{Ass:Pos:Pot}, as follows
 $$\|\Psi(\phi_0^k)\|_{L^1(\Omega)} \lesssim \|\phi_0^k\|_{L^1(\Omega)} \lesssim 1+\|\phi_0\|_H^2.$$
 Thus, the right-hand side of \cref{Eq:FFinal2} becomes independent of $k$, we can argue with a blow-up criterion and extend the time interval by setting $T_k=T$ for all $k$.

We repeat the arguments and use the derived bounds to obtain a uniform bound of $\textstyle\sqrt{m(\phi^k)} \nabla \mu^k$ in $L^2(0,T;H)$. In addition, we get a full bound of $\mu^k$ in $L^2(0,T;H)$ by \cref{Eq:MuEstimate} and of $\phi^k$ in $L^\infty(0,T;V)$ by using the test function $u=\phi^k$.
We obtain the energy inequality
\begin{equation}   \label{Eq:FFinalEnergy} \begin{aligned}
\mathcal{E}(\phi^k(t)) + \int_0^t m(\phi^k)|\nabla\mu^k|^2  \ds = \mathcal{E}(\phi^k(0)) + \int_0^T (f(\phi^k),\mu^k)_H \ds 
\end{aligned} \end{equation}
These finally yield the energy inequality
\begin{equation}   \label{Eq:FFinalbounds} \begin{aligned} &\|\phi^k\|_{L^\infty(V)}^2+  \|\Psi(\phi^k)\|_{L^\infty(L^1(\Omega))} + \|\phi^k-\bar\phi^k\|_{L^\infty(\ring V)}^2 + \|\mu^k\|_{L^2(V)}^2 +\smash{\big\|\textstyle\sqrt{m(\phi^k)} \nabla \mu^k\big\|_{L^2(H)}^2}  \\ &  \lesssim 1+\|f\|_{L^2(H)}^2+ \|\phi_0\|_V^2.
\end{aligned} \end{equation}
We obtain an estimate on $\nu^k$ by choosing $w=\nu^k+(-\Delta_k)^{-1}\nu^k$, which gives
$$\|\nabla \nu^k\|_H^2+\|\nu^k\|_H^2 \leq (\phi^k-\bar\phi^k,\nu^k)_H + (\phi^k-\bar\phi^k,\nu^k)_*$$
Thus
$\|\nu^k\|_V^2 \lesssim \|\phi^k-\bar\phi^k\|_*^2$
and therefore, $\nu^k$ is bounded in $L^\infty(0,T;\ring V)$. \medskip
	
	\noindent\textbf{Step 3: Estimate on the  time-derivative} The energy estimate \eqref{Eq:FFinalEnergy} already gives the existence of converging subsequences. Since the Faedo--Galerkin system \eqref{Eq:Faedo} involves the nonlinear functions $\Psi$ and $m$, we need to derive an estimate on the time derivative of $\phi$ in order to apply the Aubin--Lions compactness lemma and achieve strong convergence. 
	
	Let $u \in L^2(0,T;V)$. Then we have $\Pi_k u = \sum_{j=1}^k u_j^k h_j$ for time-dependent coefficient functions $u_j^k : (0,T) \to \R$, $j \in \{1,\dots,k\}$. We multiply equation \eqref{Eq:FaedoPhi} by $u_j^k$, take the sum from $j=1$ to $k$, and integrate over the interval $(0,T)$, which yields 
	\begin{equation*}
	\begin{aligned}\left|\int_0^T(\p_t \phi^k, u)_H \dt\right| = \left|\int_0^T(\p_t \phi^k,\Pi_k u)_H\dt\right| &\leq m_\infty \|\nabla \mu^k\|_{L^2(H)} \|\nabla \Pi_k u\|_{L^2(H)} + \|f\|_{L^2({Q_T})} \|\Pi_k u\|_{L^2(V)} \lesssim \|u\|_{L^2(V)},
	\end{aligned}
	\end{equation*}
	where we used the energy estimate \eqref{Eq:FFinalEnergy} to bound the terms on the right-hand side. Since $u$ was chosen arbitrarily, we have
	\begin{equation}	\label{Eq:Energy3}
	    \| \p_t \phi^k \|_{L^2(V')} = \sup_{\|u\|_{L^2(V)} \leq 1} \left|\int_0^T(\p_t \phi^k, u)_H \dt\right| \leq C(T,f,\phi_0).
	\end{equation}
	
	\noindent\textbf{Step 4: Limit process} A bounded sequence in a reflexive Banach space admits a weakly/weakly-$*$ convergent subsequence. By a standard abuse of notation, we drop the subsequence index. From the energy estimates \eqref{Eq:FFinalEnergy} and \eqref{Eq:Energy3}, we obtain the existence of limit functions $(\phi,\mu,\nu)$  such that
	$$\begin{aligned}	
	\phi^k &\longweak \phi &&\text{weakly-$*$ in } L^\infty(0,T;V), \\
 \phi^k-\bar\phi^k &\longweak \xi &&\text{weakly-$*$ in } L^\infty(0,T;\ring V), \\
	\p_t \phi^k &\longweak \p_t \phi &&\text{weakly\phantom{-*} in } L^2(0,T;V'), \\
		\phi^k &\longrightarrow \phi &&\text{strongly\hspace{1mm} in }
 L^p(0,T;H), \\
	\mu^k &\longweak \mu &&\text{weakly\phantom{-*} in } L^2(0,T;V),\\
 \nu^k &\longweak \nu &&\text{weakly-$*$ in } L^\infty(0,T;\mathring{V}),
	\end{aligned}$$
for all $p \in [1,\infty)$ as $k \to \infty$, where we applied the Aubin--Lions lemma to achieve the strong convergence of $\phi^k$. In addition, $\xi=\phi-\bar\phi$ holds as $\phi^k$ and $\bar\phi^k$ converge by themselves in $L^\infty(0,T;H)$ to $\phi$ and $\bar \phi$, respectively.

In the next step, we prove that the limit functions $\phi$ and $\mu$ satisfy the weak form of the Ohta-Kawasaki equation \eqref{Eq:Weak}. By multiplying the Faedo--Galerkin system \eqref{Eq:Faedo} by a test function $\eta \in C^\infty_c(0,T)$ and integrating over the time interval $(0,T)$, we find
	\begin{equation} \begin{aligned}
\int_0^T (\p_t \phi^k,h_j)_H \eta(t) \dt   + \int_0^T (m(\phi^k) \nabla \mu^k, \nabla h_j)_H \eta(t) \dt &= \int_0^T (f,h_j)_H \eta(t) \dt,  \\
\int_0^T (\Psi'(\phi^k),h_j)_H \eta(t) \dt +\eps^2 \int_0^T  (\nabla \phi^k, \nabla h_j)_H \eta(t) \dt + \kappa \int_0^T (\nu^k,h_j)_H \eta(t) \dt &= \int_0^T (\mu^k,h_j)_H \eta(t) \dt, \\
\int_0^T (\nabla\nu^k,\nabla h_j)_H \eta(t) \dt   &= \int_0^T (\phi_k-\bar\phi_k,h_j)_H \eta(t) \dt,  \\
\end{aligned} \label{Eq:FaedoTime}
\end{equation}
for all $j \in \{1,\dots,k\}$. We take the limit $k \to \infty$ in these two equations. The linear terms follow directly from the weak/weak-$*$ convergences. It remains to treat the integrals involving the nonlinear functions $m$ and $\Psi'$. Since $m$ and $\Psi'$ are continuous functions, see \cref{Ass:Pos:Mob}, we have by the strong convergence $\phi^k \to \phi$ in $L^2({Q_T})$ also $m(\phi^k) \to m(\phi)$ a.e. in ${Q_T}$. By the boundedness of $m$, we infer from the Lebesgue dominated convergence theorem $m(\phi^k) \nabla h_j \eta \to m(\phi) \nabla h_j \eta$ in $L^2({Q_T})^d$. By the weak convergence of $\nabla \mu^k$, we conclude as $k \to \infty$
$$m(\phi^k) \eta \nabla h_j \cdot \nabla \mu^k \longweak m(\phi) \eta \nabla h_j \cdot \nabla \mu \quad \text{ in } L^1({Q_T}).$$
The continuity of $\Psi'$ gives then $\Psi'(\phi^k) \to \Psi'(\phi)$ a.e. in ${Q_T}$. Further, from the assumption \cref{Ass:Pos:Pot} on the potential function $\Psi$, we infer the bound $$\|\Psi'(\phi^k(t)) \eta(t) h_j\|_{L^1(\Omega)} \leq C \|\eta\|_{L^\infty(0,T)} \cdot \|h_j\|_{H^2(\Omega)} \big( 1+\|\Psi(\phi^k(t))\|_{L^1(\Omega)}\big),$$
for almost every $t \in (0,T)$, and the right-hand side is bounded by the energy estimate \eqref{Eq:FFinalEnergy}. Consequently, the Lebesgue dominated convergence theorem gives for $k \to \infty$
$$\int_0^T (\Psi'(\phi^k),h_j)_H \eta(t) \dt \longrightarrow \int_0^T (\Psi'(\phi),h_j)_H \eta(t) \dt. $$

After having taken care of the nonlinear functions, we are ready to take the limit $k \to \infty$ in the equations \eqref{Eq:FaedoTime} and use the density of $H_k$ in $V$, which yields 
	\begin{equation} \label{Eq:WeakInitial2} \begin{aligned}
\int_0^T \langle \pt \phi,\xi_1 \rangle_V \eta(t) \dt   + \int_0^T (m(\phi) \nabla \mu, \nabla \xi_1)_H \eta(t) \dt &= \int_0^T (f(\phi),\xi_1)_H \eta(t) \dt,  \\
\int_0^T (\Psi(\phi),\xi_2)_H \eta(t) \dt + \int_0^T \eps^2 (\nabla \phi, \nabla \xi_2)_H \eta(t) \dt + \int_0^T \kappa(\nu,\xi_2)_H \eta(t) \dt &= \int_0^T (\mu,\xi_2)_H \eta(t) \dt, \\
\int_0^T (\nabla\nu,\nabla \xi_3)_H \eta(t) \dt   &= \int_0^T (\phi-\bar\phi,\xi_3)_H \eta(t) \dt,
\end{aligned} 
\end{equation}
for all $\xi_1,\xi_2\in V$, $\xi_3 \in \ring V$ and $\eta \in C_c^\infty(0,T)$. Applying the fundamental lemma of calculus of variations, we infer that $(\phi,\mu,\nu)$ satisfies the weak form \eqref{Eq:Weak} of the Ohta--Kawasaki equation. \medskip

\noindent\textbf{Step 5: Initial condition.} From the estimate \eqref{Eq:Energy3} we have $\p_t \phi\in L^2(0,T;V')$. By the continuous embedding $L^2(0,T;V)\cap H^1(0,T;V')\hookrightarrow C^0([0,T];H)$ we have $\phi \in C^0(0,T;H)$. Thus, we have $\phi(0)=\phi_0$ in $H$ by the uniqueness of limits. \medskip

\noindent\textbf{Step 6: Energy inequality.} We prove that the solution tuple $(\phi,\mu,\nu)$ satisfies the energy inequality \eqref{Eq:EnergySolution}. First, we note that norms are weakly/weakly-$*$ lower semi-continuous. Hence, the two norms contained in $\mathcal{E}(\phi_k)$ converge for almost all $t\in(0.T)$. Furthermore we have $\mu^k \rightharpoonup \mu$ in $L^2(0,T;V)$ and therefore for almost every $t\in(0,T)$, we infer
$$\| \mu \|_{L^2(0,t;V)} \leq \liminf_{k \to \infty} \|\mu^k\|_{L^2(0;t;V)}.$$  
We apply the Fatou lemma on the continuous and non-negative function $\Psi$ to obtain
$$\int_\Omega \Psi(\phi) \dx  \leq \liminf_{k \to \infty} \int_\Omega \Psi(\phi^k) \dx.$$ 
For the force term by continuity and boundedness of $f$, we obtain strong convergence $f(\phi^k)\to f(\phi)$ in $L^2(Q_T)$ and by weak convergence we find
\begin{align*}
 \int_0^T (f(\phi_k),\mu_k)_H\eta(t) \dt\longrightarrow \int_0^T (f(\phi),\mu)_H\eta(t)     \dt.
\end{align*}
Consequently, passing to the limit $k \to \infty$ in the discrete energy inequality \eqref{Eq:FFinalEnergy} leads to \eqref{Eq:EnergySolution}.  \medskip

\noindent\textbf{Step 7: Uniqueness.}
We assume the case of constant mobility $m=M>0$ and absence of force $f=0$. 
By a simple computation, it holds $\bar\phi=\bar\phi_0$. Consider two pairs of weak solutions $(\phi_1,\mu_1,\nu_1)$ and $(\phi_2,\mu_2,\nu_2)$, and we denote their differences by $\phi=\phi_1-\phi_2$, $\mu=\mu_1-\mu_2$ and $\nu=\nu_1-\nu_2$. Thus, $\bar\phi=\bar\phi(0)=0$ holds because the same initial is chosen. Each pair fulfills the weak form, and we find for $(\phi,\mu)$
	\begin{equation*} 
	\begin{aligned}
	\langle \p_t \phi,u \rangle_V + M (\nabla \mu,\nabla u)_H &= 0, \\
	 (\Psi'(\phi_1)-\Psi'(\phi_2),v)_H + \eps^2 (\nabla \phi,\nabla v)_H + \kappa(\nu,v)_H & =(\mu,v)_H, \\
    (\nabla\nu,\nabla w)_H &= (\phi-\bar\phi,w)_H
	\end{aligned}
	\end{equation*}
for test functions $u,v \in V$, $w \in \ring V$.
Taking $u=(-\Delta)^{-1} \phi$, $v=M\phi$ and $w=\nu$, yields
	\begin{equation*} 
	\begin{aligned}
	\langle \p_t \phi,(-\Delta)^{-1} \phi\rangle_V + M (\nabla \mu,\nabla (-\Delta)^{-1} \phi)_H &= 0, \\
	 M(\Psi'(\phi_1)-\Psi'(\phi_2),\phi)_H + M\eps^2 (\nabla \phi,\nabla \phi)_H + M\kappa (\nu,\phi)_H & =M(\mu,\phi)_H, \\
     (\nabla\nu,\nabla\nu)_H & = (\phi,\nu)_H
	\end{aligned}
	\end{equation*} 
Exploiting the property $(\nabla \mu, \nabla (-\Delta)^{-1} \phi)_H = (\mu,\phi)_H$ of the Neumann--Laplace operator, gives after adding the equations and cancelling,
\begin{equation} \label{Eq:UniqueDifference}
\begin{aligned}
&(\p_t \phi, \phi)_* + M \eps^2 \|\nabla \phi\|_H^2+M\kappa \|\nabla\nu\|_H^2 = M(\Psi'(\phi_2)-\Psi'(\phi_1),\phi)_H.
\end{aligned}
\end{equation}
Using the $(-C_\Psi)$-convexity of $\Psi$, we have by the mean value theorem
$$(\Psi'(\phi_1)-\Psi'(\phi_2),\phi)_H \geq - C_\Psi \|\phi\|_H^2,
$$
and consequently, we obtain by the $\eps$-Young inequality 
$$\begin{aligned}(\Psi'(\phi_2)-\Psi'(\phi_1),\phi)_H  \leq C_\Psi \|\phi\|_H^2 &= C_\Psi (\nabla (-\Delta)^{-1} \phi,\nabla \phi)_H \leq \frac{\eps^2}{2} \|\nabla \phi\|^2_H + \frac{C_\Psi^2}{2\eps^2} \|\nabla (-\Delta)^{-1} \phi\|_H^2.
\end{aligned}$$
Integration and applying the Gronwall inequality gives
		\begin{equation} \label{Eq:WeakDifference}
	\begin{aligned}
	\frac12 \| \phi(t)\|_*^2 + \frac{M \eps^2}{2} \|\nabla \phi\|_{L^2(H)}^2 \leq C(T) \cdot \|\phi_0 \|_*^2 = 0,
	\end{aligned}
	\end{equation}
hence $\phi_1=\phi_2$ in the sense $\|(\phi_1-\phi_2)(t)\|_{*}=0$ for a.e. $t \in [0,T]$ and $\norm{\phi_1-\phi_2}_{L^2(V)}=\norm{\nu_1-\nu_2}_{L^2(V)}=0$, and consequently $\mu_1=\mu_2$.  \qed

\subsection{Higher spatial regularity} \label{Sub:Reg}
We require higher spatial regularity in the upcoming proof of existence to the Ohta--Kawasaki system with degenerating mobility. Therefore, we state and prove the following theorem: 
\begin{theorem} \label{Thm:Reg}
	Let the assumption of \cref{Thm:PosMob} hold. Then there exists a weak solution $(\phi,\mu,\nu)$ to the Ohta--Kawasaki equation in the sense
	$$\begin{aligned} \pt \phi &= \div(m(\phi) \nabla \mu) +f(\phi) &&\text{ in } L^2(0,T;V'), \\ \mu&=\Psi'(\phi)-\eps^2 \Delta \phi + \kappa\nu &&\text{ a.e. in } {Q_T},  \\
 -\Delta\nu&= \phi-\bar\phi &&\text{ a.e. in } {Q_T},\end{aligned}$$ with the additional regularity $\phi \in L^2(0,T;H^2(\Omega))$ and $\nu \in L^\infty(0,T;H^2(\Omega))$. Moreover, we obtain the additional a priori bound
		\begin{equation} \label{Eq:EnergyExtended} 
	\|\phi\|_{L^2(H^2(\Omega))}^2  \leq  C(T,f,\phi_0). \end{equation}
	Additionally, if $\Psi \in C^2(\R)$ satisfies the growth estimate 
	\begin{equation} \label{Eq:Growth} |\Psi''(x)|\leq C(1+|x|^r) \quad \text{for all } x \in \R \text{ where } \begin{cases} r=\frac{2}{d-2}, &d >2, \\
	r \geq 2, &d=2,
	\end{cases}\end{equation} for some constant $C<\infty$, then it holds $\Psi'(\phi) \in L^2(0,T;V)$ and $\phi \in L^2(0,T;H^3(\Omega))$.
\end{theorem}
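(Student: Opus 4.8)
The plan is to reuse the Galerkin approximation $(\phi^k,\mu^k,\nu^k)$ constructed in the proof of \cref{Thm:PosMob}, derive a new $H^2$-bound for $\phi^k$ that is uniform in $k$, and pass to the limit. The regularity of $\nu$ is immediate and I would dispatch it first: since $-\Delta\nu=\phi-\bar\phi$ with homogeneous Neumann data on the $C^{1,1}$-domain $\Omega$, and $\phi\in L^\infty(0,T;V)\hookrightarrow L^\infty(0,T;H)$, standard elliptic regularity for the Neumann problem yields $\|\nu\|_{H^2(\Omega)}\lesssim\|\phi-\bar\phi\|_H\lesssim\|\phi\|_V$, hence $\nu\in L^\infty(0,T;H^2(\Omega))$.

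For $\phi$ I would test the second Galerkin equation \cref{Eq:FaedoMu} with $v=-\Delta\phi^k$. The crucial observation is that because the $h_j$ are Neumann--Laplace eigenfunctions, on the finite-dimensional space $H_k$ the discrete and continuous Laplacians coincide, so $-\Delta\phi^k=\sum_{j}\lambda_j\phi^k_j h_j\in H_k$ is an admissible test function and $\eps^2(\nabla\phi^k,\nabla(-\Delta\phi^k))_H=\eps^2\|\Delta\phi^k\|_H^2$. For the potential term I integrate by parts, using that $\partial_n\phi^k=0$ (each eigenfunction satisfies the Neumann condition), which gives $(\Psi'(\phi^k),-\Delta\phi^k)_H=(\Psi''(\phi^k)\nabla\phi^k,\nabla\phi^k)_H\geq-C_\Psi\|\nabla\phi^k\|_H^2$ by the semiconvexity $\Psi''\geq-C_\Psi$ from \cref{Ass:Pos:Pot}. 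Estimating the remaining terms $(\mu^k,-\Delta\phi^k)_H$ and $\kappa(\nu^k,-\Delta\phi^k)_H$ by the $\eps$-Young inequality and absorbing $\tfrac{\eps^2}{2}\|\Delta\phi^k\|_H^2$ into the left-hand side leaves
\[
\tfrac{\eps^2}{2}\|\Delta\phi^k\|_H^2\lesssim\|\mu^k\|_H^2+\|\nu^k\|_H^2+\|\nabla\phi^k\|_H^2.
\]
Integrating in time and invoking the uniform bounds \cref{Eq:FFinalbounds} ($\mu^k$ in $L^2(V)$, $\phi^k$ in $L^\infty(V)$, $\nu^k$ in $L^\infty(\ring V)$) makes the right-hand side independent of $k$; elliptic regularity then gives $\|\phi^k\|_{L^2(H^2)}^2\lesssim\|\Delta\phi^k\|_{L^2(H)}^2+\|\phi^k\|_{L^2(H)}^2\leq C(T,f,\phi_0)$. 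Weak lower semicontinuity under $\phi^k\rightharpoonup\phi$ then yields $\phi\in L^2(0,T;H^2(\Omega))$ and the bound \eqref{Eq:EnergyExtended}. I emphasize that this argument uses only the semiconvexity of $\Psi$, not any growth restriction. Rearranging the identity $\eps^2\Delta\phi=\Psi'(\phi)+\kappa\nu-\mu$ then shows $\Psi'(\phi)\in L^2(0,T;H)$ and that the three equations hold in the stated strong/almost-everywhere sense.

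For the improved regularity under the growth bound \eqref{Eq:Growth}, I would show $\nabla\Psi'(\phi)=\Psi''(\phi)\nabla\phi\in L^2(0,T;H)$. Using $|\Psi''(\phi)|\lesssim1+|\phi|^r$, the critical term is controlled by $\int_\Omega|\phi|^{2r}|\nabla\phi|^2\dx\leq\|\phi\|_{L^{2^*}}^{2r}\|\nabla\phi\|_{L^{2^*}}^2$ via Hölder with exponents $\tfrac d2$ and $\tfrac{d}{d-2}$, where $2^*=\tfrac{2d}{d-2}$; the choice $r=\tfrac{2}{d-2}$ is precisely the one that makes the power of $\phi$ equal to $2^*$, so that the embeddings $H^1\hookrightarrow L^{2^*}$ (applied to $\phi\in L^\infty(0,T;V)$) and $H^2\hookrightarrow W^{1,2^*}$ (applied to $\phi\in L^2(0,T;H^2)$ from the first part) close the estimate after integration in time; in $d=2$ every finite exponent is admissible so any $r\geq2$ works. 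This gives $\Psi'(\phi)\in L^2(0,T;V)$. Finally, since now $\Psi'(\phi),\kappa\nu,\mu\in L^2(0,T;V)$, the identity $\eps^2\Delta\phi=\Psi'(\phi)+\kappa\nu-\mu$ shows $\Delta\phi\in L^2(0,T;H^1)$, and elliptic regularity for the Neumann problem on a sufficiently smooth domain promotes this to $\phi\in L^2(0,T;H^3(\Omega))$.

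The main obstacle is the $H^2$-estimate, specifically the nonlinear term $(\Psi'(\phi^k),-\Delta\phi^k)_H$: everything hinges on the fact that the Galerkin basis diagonalises the Neumann--Laplacian, so that $-\Delta\phi^k$ is an admissible test function, the boundary term in the integration by parts vanishes, and the resulting quadratic form is sign-controlled by the semiconvexity constant $C_\Psi$; once this is set up, all remaining terms are bounded by the estimates already available from \cref{Thm:PosMob}. For the $H^3$-step the only delicate point is the balancing of the growth exponent against the Sobolev scale, for which $r=2/(d-2)$ is sharp.
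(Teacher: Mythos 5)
Your proposal is correct, and for the central $H^2$-estimate it coincides with the paper's argument: both test the Galerkin equation for $\mu^k$ with $\pm\Delta\phi^k$ (admissible because the eigenbasis diagonalises the Neumann--Laplacian), integrate the potential term by parts, and use only the semiconvexity $\Psi''\geq-C_\Psi$ together with the bounds already established in \cref{Thm:PosMob}. The differences lie in the two auxiliary steps. For $\nu$, you invoke elliptic regularity for the Neumann problem on the limit equation $-\Delta\nu=\phi-\bar\phi$, whereas the paper stays at the Galerkin level and tests with $\Delta\nu^k$ to get $\|\Delta\nu^k\|_H^2=(\nabla\nu^k,\nabla\phi^k)_H$; these are interchangeable. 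For the $H^3$-step the routes genuinely diverge: the paper substitutes $\mu^k$ into the \emph{parabolic} equation for $\phi^k$, tests with $-\Delta\phi^k$, and extracts a uniform bound on $\|\nabla\!\Delta\phi^k\|_{L^2(Q_T)}$ from the dissipation term (this uses $m_0>0$), before appealing to elliptic regularity. You instead read the \emph{elliptic} identity $\eps^2\Delta\phi=\Psi'(\phi)+\kappa\nu-\mu$ in the limit, show $\Psi'(\phi)\in L^2(0,T;V)$ by the same Hölder/Sobolev balancing with $r=2/(d-2)$ that the paper uses, and conclude $\Delta\phi\in L^2(0,T;H^1)$ directly. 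Your version is more elementary in that it never touches the time derivative or the mobility bounds, while the paper's version additionally produces an explicit a priori bound on $\nabla\!\Delta\phi^k$ at the approximate level. Both rely on the same key computation, $\|\nabla\Psi'(\phi)\|_H\lesssim\|\Psi''(\phi)\|_{L^d(\Omega)}\|\nabla\phi\|_{L^{2d/(d-2)}(\Omega)}$, and both implicitly require more domain smoothness than the stated $C^{1,1}$ for the final $H^3$ elliptic lift, so you are not introducing any gap that the paper does not already have.
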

\begin{proof}
In the proof of \cref{Thm:PosMob}, we have in the Galerkin setting $\phi^k(t) \in H_k \subset H^2(\Omega)$. 
We take the test function $\Delta \phi^k(t) \in H_k$ in the equation for $\mu^k$, which gives
$$\eps^2 \|\Delta \phi^k\|_H^2 = (\nabla \mu^k,\nabla \phi^k)_H - (\Psi''(\phi^k),|\nabla \phi^k|^2)_H - \kappa (\nabla \nu^k,\nabla \phi^k)_H.$$
Using the additional assumption $\Psi \in C^2(\R)$ it holds by the semiconvexity $\Psi''(x) \geq -C_\Psi$ for all $x \in \R$, and thus, we arrive after integrating from $0$ to $T$ at
$$\begin{aligned} \eps^2 \|\Delta \phi^k\|_{L^2(H)}^2 &\leq \|\mu^k\|_{L^2(V)} \|\phi^k\|_{L^2(V)} + C_\Psi \| \phi^k\|_{L^2(V)}^2 + C \|\nu^k\|_{L^2(V)}\|\phi^k\|_{L^2(V)} \leq C(T,\phi_0).
\end{aligned}$$
Since $(\|\cdot\|_H^2+\|\Delta \cdot\|_H^2)^2$ is an equivalent norm on $H^2(\Omega)$, see \cite[III.Lemma 4.2]{temam2012infinite}, it yields the uniform boundedness of $\phi^k$ in $L^2(0,T;H^2(\Omega))$ and consequently, by the reflexivity of the Hilbert space it holds for the limit $\phi \in L^2(0,T;H^2(\Omega))$.

Similarly, we have $\nu^k(t) \in H_k \subset H^2(\Omega)$ and 
taking the test function $\Delta \nu^k(t) \in H_k$ in the equation for $\nu^k$ yields for a.e. $t \in (0,T)$
$$\|\Delta \nu^k(t)\|_H^2 = (\nabla \nu^k(t),\nabla \phi^k(t))_H \leq \|\nabla \nu^k\|_{L^\infty(H)} \|\nabla \phi^k\|_{L^\infty(H)}\leq C(T,\phi_0),$$
from which we obtain $\nu^k,\nu \in L^\infty(0,T;H^2(\Omega))$.

Inserting $\mu^k = \Pi_k \Psi'(\phi^k)-\eps^2 \Delta \phi^k+\kappa\nu^k$ and $\nu^k=(-\Delta)^{-1}(\phi^k-\bar\phi^k)$ into the equation of $\phi^k$ and considering the Galerkin system 
$$\begin{aligned} &(\pt \phi^k,\xi)_H+(\nabla \Pi_k\Psi'(\phi^k),m(\phi^k)\nabla \xi)_H - (\nabla \Delta \phi^k,m(\phi^k) \nabla \xi)_H + (\nabla (-\Delta)^{-1}(\phi^k-\bar\phi^k),m(\phi^k)\nabla \xi)_H = (f(\phi^k),\xi)_H,
\end{aligned}$$
for all $\xi \in H_k$, and taking the test function $u=-\Delta \phi^k$, we get
$$\begin{aligned} (\pt\nabla\phi^k,\nabla\phi^k)_H  + m_0 \|\nabla\!\Delta \phi^k\|_H^2  &\leq \|f\|_H \|\Delta \phi^k\|_H + m_\infty \|\nabla \Psi'(\phi^k)\|_H  \|\nabla\!\Delta \phi^k\|_H + m_\infty \|\phi^k-\bar\phi^k\|_* \|\nabla\!\Delta\phi^k\|_H \\ &\leq C(T,\phi_0,f) + C\|\nabla \Psi'(\phi^k)\|_H^2 + C\|\phi^k-\bar\phi^k\|_*^2 + \frac{m_0}{2} \|\nabla\!\Delta \phi^k\|_H^2.\end{aligned}$$
By assumption \eqref{Eq:Growth} it holds the growth estimate $|\Psi''(x)| \leq C(1+|x|^r)$ for $r=\frac{2}{d-2}$ for all $x \in \R$ in the case of $d>2$ (for $d=2$ choose any exponent $r \geq 2$ and use the embedding $V \hookrightarrow L^r(\Omega)$).
Therefore, we apply the H\"older and Sobolev inequalities \eqref{Eq:SobolevInequality} to obtain the bound
$$\begin{aligned} \|\nabla \Psi'(\phi^k)\|_H = \|\Psi''(\phi^k) \nabla \phi^k\|_H &\leq \|\Psi''(\phi^k)\|_{L^d(\Omega)} \|\nabla \phi^k\|_{L^{2d/(d-2)}(\Omega)}\lesssim \|1+\phi^k\|_{V}^{2/(d-2)} \|\nabla \phi^k\|_{V}.
\end{aligned}$$
Taking the square on both sides and integrating over the interval $[0,T]$, it yields
$$\|\nabla \Psi'(\phi^k)\|_{L^2(H)}  \lesssim  \|1+\phi^k\|_{L^\infty(V)}^{4/(d-2)} \| \phi^k\|_{L^2(H^2(\Omega))}\leq C(T,\phi_0),$$
and thus, it follows from typical estimates $\nabla\!\Delta \phi^k \in L^2(Q_T)$ and elliptic regularity theory \cite{agmon1959estimates} gives $\phi \in L^2(0,T;H^3(\Omega))$.
\end{proof}
\subsection{Degenerating case} \label{Sec:Deg}

We approximate and extend the mobility function $m \in W^{1,\infty}(-1,1)$ with $m(x)>0$ for all $x\in (-1,1)$ and  $m(\pm 1)=0$ by a strictly positive function $m_\delta$ in the following way:
	$$m_\delta(x) = \begin{cases} m(\delta-1), &\ifs x \leq \delta-1, \\ m(x), &\ifs |x| \leq 1-\delta, \\ m(1-\delta), &\ifs x \geq 1-\delta, \end{cases}$$
where $\delta \in (0,1)$. We extend $m$ by zero outside $[-1,1]$ and denote the extension by $\bar m \in W^{1,\infty}(\R)$. Note that $m_\delta'=\bar m'$ on $[-1+\delta,1-\delta]$. The approximation $m_\delta$ is positive and admits regularity in $W^{1,\infty}(\R)$ with the upper and lower bounds (for $\delta$ sufficiently small)
	$$0<\min\{m(-1+\delta),m(1-\delta)\} \leq m_\delta(x) \leq \max_{y \in [-1,1]} m(y)  \quad \forall x \in \R.$$
	Further, we consider the potential $\Psi:(-1,1) \to \R_{\geq 0}$ and assume the splitting $\Psi=\Psi_1+\Psi_2$ with $\Psi_1 \in C^2(-1,1)$ convex and $\Psi_2 \in C^2([-1,1])$ being $(-C_\Psi)$-convex. We define its regularization $\Psi_\delta : \R \to \R$ as $\Psi_\delta = \Psi_{1,\delta} + \bar\Psi_2$ where $\Psi_{1,\delta}\in C^2(\R)$ is the unique function with $\Psi_{1,\delta}(0)=\Psi_1(0)$, $\Psi_{1,\delta}'(0)=\Psi_1'(0)$, and 
		$$(\Psi_{1,\delta})''(x) = \begin{cases} (\Psi_1)''(\delta-1), &\ifs x \leq \delta-1, \\ (\Psi_1)''(x), &\ifs |x| \leq 1-\delta, \\ (\Psi_1)''(1-\delta), &\ifs x \geq 1-\delta. \end{cases}$$
		In particular, $\Psi_{1,\delta}$ is convex on $\R$ since $\Psi_1$ itself is assumed to be convex on $(-1,1)$. Moreover, we introduce the extension $\bar \Psi_2\in C^2(\R)$ of $\Psi_2$ to the reals by setting
		$$\bar\Psi_2(x)=\begin{cases} \Psi_2(-1)+\Psi_2'(-1)(x+1) + \frac12 \Psi_2''(-1) (x+1)^2, &\ifs x < -1, \\
		\Psi_2(x), &\ifs |x|\leq 1, \\
		\Psi_2(1)+\Psi_2'(1)(x-1) + \frac12 \Psi_2''(1) (x-1)^2, &\ifs x >1. 
		\end{cases}$$
		It holds $\|\bar{\Psi}_2''\|_{C(\R)} \leq \| \Psi_2''\|_{C([-1,1])} \leq C$ and $\Psi''_\delta(x) \geq - C_\Psi$ for all $x \in \R$. By definition we have $\Psi_\delta=\Psi$ and $m_\delta = m$ on the interval $[-1+\delta,1-\delta]$ for $\delta \in (0,1)$. \medskip
	 
	\noindent\textbf{Auxiliary problem.}
	After having defined the approximations $m_\delta$ and $\Psi_\delta$, we consider the auxiliary problem
	\begin{equation}\begin{aligned}
	\partial_t \phi_\delta  &=  \div ( m_\delta(\phi_\delta) \nabla \mu_\delta)+f(\phi_\delta),  \\
	\mu_\delta &= \Psi_\delta'(\phi_\delta) - \eps^2 \Delta \phi_\delta + \kappa \nu_\delta, \\
    -\Delta\nu_\delta &= \phi_\delta-\bar\phi_\delta
	\end{aligned} \label{Eq:CHdelta} \end{equation}
	with initial data $\phi_{\delta,0}=\phi_0 \in (-1,1)$ a.e. in $\Omega$, which has a weak solution $(\phi_\delta,\mu_\delta,\nu_\delta)$ according to the result from before and it satisfies
	\begin{equation}\begin{aligned}
	\langle \partial_t \phi_\delta,\xi_1\rangle_V  &=  -(m_\delta(\phi_\delta) \nabla \mu_\delta,\nabla \xi_1)_H + (f(\phi_\delta),\xi_1)_H,  \\
	(\mu_\delta,\xi_2)_H &= (\Psi_\delta'(\phi_\delta),\xi_2)_H - \eps^2 (\Delta \phi_\delta,\xi_2)_H + \kappa (\nu,\xi_2)_H, \\
    -(\Delta\nu_\delta,\xi_3)_H &= (\phi_\delta-\bar\phi_\delta,\xi_3)_H
	\end{aligned} \label{Eq:CHdeltaWeak} \end{equation}
	for all $\xi_1\in V$, $\xi_2,\xi_3 \in H$. Indeed, the mobility $m_\delta$ is positive, continuous and bounded, and the potential $\Psi_\delta=\Psi_{1,\delta}+\bar \Psi_2$ is $(-C_\Psi)$-convex as discussed before and fulfils the required growth estimates due to the definitions of $\Psi_{1,\delta}$ and $\bar \Psi_2$.

	We multiply the variational form by a smooth test function $\eta \in C_c^\infty(0,T)$ and exploit the density of the tensor space $C_c^\infty(0,T) \otimes V$ in $L^2(0,T;V)$ (and analogously for $H$) to formulate the weak form in terms of time-dependent test functions, i.e., 
	\begin{subequations}\begin{align}
	\label[equation]{Eq:CHdeltaWeakTimePhi} \int_0^T \langle \partial_t \phi_\delta,\xi_1\rangle_V \dt  &=  -\int_0^T (m_\delta(\phi_\delta) \nabla \mu_\delta,\nabla \xi_1)_H \dt + (f(\phi_\delta),\xi_1)_H,  \\
	\int_0^T (\mu_\delta,\xi_2)_H \dt &= \int_0^T (\Psi_\delta'(\phi_\delta),\xi_2)_H - \eps^2 (\Delta \phi_\delta,\xi_2)_H +\kappa(\nu_\delta,\xi_2)_H \dt,
	\label[equation]{Eq:CHdeltaWeakTimeMu}\\
    -\int_0^T(\Delta\nu_\delta,\xi_3)_H \dt &=  \int_0^T(\phi_\delta-\bar\phi_\delta,\xi_3)_H \dt
    \label[equation]{Eq:CHdeltaWeakTimenu}
	\end{align}\label[equation]{Eq:CHdeltaWeakTime}\end{subequations}
	for all $\xi_1 \in L^2(0,T;V)$ and $\xi_1, \xi_2 \in L^2(Q_T)$.
	We derive $\delta$-uniform estimates and pass to the limit $\delta \to 0$. 
	
	We make the following assumptions for the following proofs.
	
	\begin{assumption} \label{As:Deg} ~\\[-0.4cm]
	\begin{enumerate}[label=(B\arabic*), ref=B\arabic*, leftmargin=.9cm] \itemsep.1em
    \item $\Omega \subset \R^d$ bounded $C^{1,1}$-domain with $d \geq 2$, $T>0$ finite time horizon, $Q_T=(0,T)\times \Omega$.
    \item $\phi_0 \in V$ with $\Psi(\phi_0) \in L^1(\Omega)$, $\Phi(\phi_0) \in L^1(\Omega)$ (see \cref{Lem:DegMobEst}), and $|\phi_0(x)|\leq 1$ for a.e. $x \in \Omega$. \label{Ass:Deg:Ini}
    \item $\Psi=\Psi_1+\Psi_2$ with $\Psi_1 \in C^2(-1,1)$ convex and $\Psi_2 \in C^2([-1,1])$ is $(-C_\Psi)$-convex for some $C_\Psi<\infty$. \label{Ass:Deg:Pot}
    \item $m \in W^{1,\infty}(-1,1)$ such that $m(x) >0$ for all $x\in (-1,1)$, $m(\pm 1)=0$, and $m\Psi'' \in C^0([-1,1])$.\label{Ass:Deg:Mob}
    \item $f\in C^0(\R,\R_{\geq 0})$ and $f(x)=0$ for $x \in \R\setminus (-1,1)$, and for $x \in [-1,1]$ we require
    \begin{equation*}
        1-|x| \lesssim f(x) \lesssim \begin{cases}
            |1-x| \quad x\geq 0,\\
            |1+x| \quad x\leq 0.
        \end{cases}
    \end{equation*} \label{Ass:Deg:f1}
    \item In the case $f(x)\neq 0$ we require $\Psi_1''(x)=\max\{0,1-x^2\}^{-p}F(x)$ and $m(x)=\max\{0,1-x^2\}^{q}m_0(x)$ for $2 \geq q \geq p \geq 0$. The functions $F$ and $m_0$ are positive bounded by $F_0$. \label{Ass:Deg:f2}
\end{enumerate}
	\end{assumption}

We note that the function $f(x)=\max\{0,1-x^2\}$ is the typical example that fulfills the assumptions.
	Further, we make the following remarks regarding \cref{As:Deg}. \medskip
	\begin{enumerate} 
	\item We assume in \cref{Ass:Deg:Mob} a mobility that degenerates at $\pm 1$. For the general case of degeneracy at points $a,b\in \R$, one has to shift the interval $[-1,1]$ by an operator $A:[-1,1] \to [a,b]$, see \cite{abels2013incompressible}. We assume that mobility compensates for an eventual blow-up of $\Psi''$ at $\pm 1$ by assuming $m\Psi'' \in C^0([-1,1])$ in \cref{Ass:Deg:Mob}. For example, the Flory--Huggins potential has the second derivative $\Psi''(x)=\theta/(1-x^2)-\theta_0$ for $x \in (-1,1)$ and therefore, degenerates at $x=\pm 1$. Then with the typical mobility $m(x)=(1-x^2)^\ell$, $\ell\geq 1$, $x \in [-1,1]$, we have indeed $m \Psi'' \in C^0([-1,1])$. For the double-obstacle potential one chooses $\Psi_1=0$ and $\Psi_2(x)=1-x^2$, since $\Psi_1$ does not have to be defined on the boundary $\pm 1$ in the assumption \cref{Ass:Deg:Pot} of \cref{Thm:DegMob}.\medskip
	
	\item We remark that we assume $|\phi_0(x)|\leq 1$ a.e. in $\Omega$ in \cref{Ass:Deg:Ini} instead of excluding the values $\pm 1$ to guarantee $\Psi_\delta(\phi_0)=\Psi(\phi_0)$ for $\delta$ sufficiently small. We use the same argument as in \cite{abels2013incompressible}. The assumption $|\phi_0(x)| \leq 1$ a.e. implies $\bar \phi_0 \in [-1,1]$. In the case of $\bar \phi_0=\pm 1$ it holds $\phi_0 = \pm 1$ a.e. in $\Omega$, which readily gives the existence of a weak solution $(\phi,J,\nu)=(\pm 1,0,0)$. Therefore, in the proof, we solely consider the case $|\phi_0|<1$ almost everywhere.\medskip
	\end{enumerate}

	We formulate and prove two lemmata, which will be needed in the proof of existence theorem in case of a degenerate mobility. First, we will derive an estimate of the so-called entropy function $\Phi$. Second, we will prove a key inequality that allows us to achieve the result $\phi(t,x) \in [-1,1]$ for a.e. $(t,x) \in Q_T$.

		\begin{lemma} \label{Lem:DegMobEst} Let \cref{As:Deg} hold. Further, let $\Phi:(-1,1) \to \R_{> 0}$ be the unique function, which is given by $\Phi''(x)=1/m(x)$, $\Phi'(0)=\Phi(0)=0$. Further, its approximation $\Phi_\delta:\R \to \R_{>0}$ is defined by $\Phi''_\delta(x)=1/m_\delta(x)$ and $\Phi_\delta'(0)=\Phi_\delta(0)=0$. Then the following $\delta$-uniform bounds hold
	\begin{equation}\begin{aligned} 
        &\|\Phi_\delta(\phi_\delta)\|_{L^\infty(L^1(\Omega))} + \norm{\phi_\delta}_{L^\infty(V)}^2 + \norm{\nu_\delta}_{L^\infty(V)}^2 + \norm{\Psi_\delta(\phi_\delta)}_{L^\infty(L^1(\Omega))}+ \|\Delta \phi_\delta\|_{L^2(H)}^2  \\
        & + \norm{\sqrt{m_\delta(\phi_\delta)}\nabla\mu_\delta}_{L^2(H)}^2+ \big( \Psi_{\delta}''(\phi_\delta),|\nabla\phi_\delta|^2\big)_{L^2(H)} \leq C(T,\phi_0).  
	\end{aligned} \label{Eq:EnergyH2} \end{equation}
	\end{lemma}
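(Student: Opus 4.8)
The plan is to superpose two a~priori estimates — a standard \emph{energy} estimate and a degenerate-mobility \emph{entropy} estimate — and to close the resulting differential inequality by Gronwall's inequality. Throughout I fix $\delta$ and use that the regularized problem \eqref{Eq:CHdelta} falls under \cref{Thm:PosMob,Thm:Reg}, since $m_\delta$ is strictly positive and bounded and $\Psi_\delta$ is $(-C_\Psi)$-convex with the required growth. Hence $(\phi_\delta,\mu_\delta,\nu_\delta)$ enjoys $\phi_\delta\in H^1(0,T;V')\cap L^\infty(0,T;V)\cap L^2(0,T;H^2(\Omega))$, and the whole point is that the constants produced below do not depend on $\delta$.

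First I would invoke the integrated energy inequality \eqref{Eq:EnergyIneq} for the regularized problem,
\begin{equation*}
\mathcal{E}(\phi_\delta(t))+\int_0^t\!\big(m_\delta(\phi_\delta),|\nabla\mu_\delta|^2\big)_H\ds\leq\mathcal{E}(\phi_\delta(0))+\int_0^t\!(f(\phi_\delta),\mu_\delta)_H\ds,
\end{equation*}
which already controls $\norm{\phi_\delta}_{L^\infty(V)}$, $\norm{\Psi_\delta(\phi_\delta)}_{L^\infty(L^1(\Omega))}$, $\norm{\nu_\delta}_{L^\infty(V)}$ (through the nonlocal energy term $\tfrac\kappa2\norm{\phi_\delta-\bar\phi_\delta}_*^2=\tfrac\kappa2\norm{\nabla\nu_\delta}_H^2$) and $\norm{\sqrt{m_\delta(\phi_\delta)}\nabla\mu_\delta}_{L^2(H)}$, once the right-hand side is bounded $\delta$-uniformly. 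Since in the degenerate case one cannot expect a uniform $L^2(V)$ bound on $\mu_\delta$, I would not estimate $(f,\mu_\delta)_H$ directly but split $\mu_\delta=\Psi_\delta'(\phi_\delta)-\eps^2\Delta\phi_\delta+\kappa\nu_\delta$. The term $(f,\Psi_\delta')_H$ stays bounded because \cref{Ass:Deg:f2} makes $f\Psi_1'$ bounded up to $\pm1$, the term $\kappa(f,\nu_\delta)_H$ is controlled through $\norm{\nu_\delta}_H^2$ fed into Gronwall, and the delicate term $-\eps^2(f,\Delta\phi_\delta)_H\le C\norm{\Delta\phi_\delta}_H$ will be absorbed by the entropy estimate.

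The heart of the argument is the entropy estimate. Because $\Phi_\delta''=1/m_\delta$ is bounded for fixed $\delta$, the function $\Phi_\delta'$ is Lipschitz, so $\Phi_\delta'(\phi_\delta)\in L^2(0,T;V)$ is an admissible test function $\xi_1$ in \eqref{Eq:CHdeltaWeakTimePhi}, and the chain rule gives $\langle\pt\phi_\delta,\Phi_\delta'(\phi_\delta)\rangle_V=\ddt\int_\Omega\Phi_\delta(\phi_\delta)\dx$. The crucial cancellation $m_\delta(\phi_\delta)\nabla\Phi_\delta'(\phi_\delta)=m_\delta(\phi_\delta)\Phi_\delta''(\phi_\delta)\nabla\phi_\delta=\nabla\phi_\delta$ turns the mobility term into $(\nabla\mu_\delta,\nabla\phi_\delta)_H$; substituting the equations for $\mu_\delta$ and $\nu_\delta$ and using the Neumann conditions yields
\begin{equation*}
\ddt\int_\Omega\Phi_\delta(\phi_\delta)\dx+\big(\Psi_\delta''(\phi_\delta),|\nabla\phi_\delta|^2\big)_H+\eps^2\norm{\Delta\phi_\delta}_H^2+\kappa\norm{\phi_\delta-\bar\phi_\delta}_H^2=(f(\phi_\delta),\Phi_\delta'(\phi_\delta))_H.
\end{equation*}
The indefinite potential term is handled by the semiconvexity $\Psi_\delta''\ge-C_\Psi$: writing $\Psi_\delta''=(\Psi_\delta''+C_\Psi)-C_\Psi$ keeps the nonnegative contribution $(\Psi_\delta''+C_\Psi)|\nabla\phi_\delta|^2$ on the left — this controls the last term of \eqref{Eq:EnergyH2} — while the surplus $-C_\Psi\norm{\nabla\phi_\delta}_H^2$ is absorbed by the energy bound on $\norm{\phi_\delta}_{L^\infty(V)}$.

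The main obstacle is the force--entropy coupling $(f(\phi_\delta),\Phi_\delta'(\phi_\delta))_H$, since $\Phi_\delta'$ may blow up at $\pm1$ as $m_\delta\to0$. Here the exponent matching $2\ge q\ge p\ge0$ of \cref{Ass:Deg:f2} is exactly what is needed: with $m\sim(1-x^2)^q$ one has $\Phi_\delta'\sim(1-x^2)^{1-q}$, whereas $f\lesssim1-|x|$ by \cref{Ass:Deg:f1}, so $f\,\Phi_\delta'\sim(1-x)^{2-q}$ is bounded uniformly in $\delta$ precisely when $q\le2$; consequently $(f,\Phi_\delta')_H\le C$. Adding the energy and entropy inequalities, absorbing $C\norm{\Delta\phi_\delta}_H$ from the energy right-hand side into $\eps^2\norm{\Delta\phi_\delta}_H^2$ by Young's inequality, and integrating in time, I would close the estimate by Gronwall's inequality, obtaining all asserted $\delta$-uniform bounds except the one on $\nu_\delta$. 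The latter follows exactly as in \cref{Thm:PosMob} by testing \eqref{Eq:CHdeltaWeakTimenu} with $\nu_\delta+\inv\nu_\delta$ and using the already established $L^\infty(V)$ control of $\phi_\delta-\bar\phi_\delta$.
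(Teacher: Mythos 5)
Your proposal is correct and follows essentially the same route as the paper: the integrated energy inequality with $(f,\mu_\delta)_H$ rewritten via the $\mu_\delta$-equation, the entropy estimate obtained by testing with $\Phi_\delta'(\phi_\delta)$ and using the cancellation $m_\delta(\phi_\delta)\nabla\Phi_\delta'(\phi_\delta)=\nabla\phi_\delta$, the exponent-matching analysis of the singular products $f\,\Psi_{1,\delta}'$ and $f\,\Phi_\delta'$ under \cref{Ass:Deg:f1,Ass:Deg:f2}, and a final Gronwall argument. The only presentational difference is that the paper additionally records an explicit $L^2$-estimate by testing with $\phi_\delta$ to control the mean-value part of the $L^\infty(V)$ bound, which you handle implicitly.
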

	
	\begin{proof}
    \noindent\textbf{Step 1: Energy bounds.} 
    Using the energy inequality \cref{Eq:EnergyIneq} at $t=T$, we find 
    \begin{align*}
  \mathcal{E}(\phi_\delta(T)) + \int_0^T m_\delta(\phi_\delta)|\nabla\mu_\delta| \ds \leq \mathcal{E}(\phi_\delta(0))  + \int_0^T(f(\phi_\delta),\mu_\delta)_H  \ds
 \end{align*}
    Hence, we begin by estimating $(f(\phi_\delta),\mu_\delta)_H$ suitably. Using $\xi_2=f(\phi_\delta)$ as a test function in \cref{Eq:CHdeltaWeakTimeMu}, we find
    \begin{equation*}
        \int_0^T(\mu_\delta,f(\phi_\delta))_H \ds = \int_0^T (\Psi_\delta'(\phi_\delta),f(\phi_\delta))_H -\eps^2 (\Delta\phi_\delta,f(\phi_\delta))_H + \kappa(\nu_\delta,f(\phi_\delta))_H \ds.
    \end{equation*}
    Regarding the first inner product on the right-hand side, we use the splitting $\Psi_\delta=\Psi_{1,\delta} + \bar\Psi_2$, see \cref{Ass:Deg:Pot}, and apply the H\"older's and Young's inequalities, which yields
    \begin{equation*}
    \int_0^T(\mu_\delta,f(\phi_\delta))_H \ds \leq \delta\norm{\Delta\phi_\delta}_{L^2(H)}^2 + \delta\|\nu_\delta\|_{L^2(H)}^2 + C + \int_0^T (\Psi_{1,\delta}'(\phi_\delta),f(\phi_\delta))_H\ds
    \end{equation*}

    For estimating the final inner product, we consider several cases and rely on the computations in \cite{Ebenbeck21} and it is sufficient to consider $p=q=2$ so the most singular case. In this case, the singular behavior of $\Psi_{1,\delta}'$ is the same as $\Phi_{1,\delta}'$. Before proceeding, we will derive a representation of $\Psi_{1,\delta}'$. Using the definition of $\Psi_{1,\delta}''$ and the continuity and normalization assumption, i.e. $\Psi_{1,\delta}'\in C^1(\R)$ and $\Psi_{1,\delta}'(0)=\Psi_{1}'(0)=0$ we find
    \begin{align*}
      \Psi_{1,\delta}(x)' = \begin{cases}
        \int_{-1+\delta}^x \Psi_1''(-1+\delta)\dy  - \int_{-1+\delta}^0 \Psi_1''(y)\dy, &\qquad  x \in (-\infty,-1+\delta], \\
        \int_{-1+\delta}^x \Psi''_1(y) \dy - \int_{-1+\delta}^0 \Psi''(y) \dy, &\qquad  x\in(-1+\delta,1-\delta)\\
        \int_{1-\delta}^x \Psi_1''(1-\delta)\dy  + \int_{0}^{1-\delta} \Psi_1''(y)\dy, &\qquad  x \in [1-\delta,+\infty)
      \end{cases}  
    \end{align*}

In the following, we will consider upper bounds for $|f(x)\Psi_{1,\delta}'(x)|$ for different cases of $x$.
    \begin{enumerate} \itemsep.3em
        \item In the cases $x\leq -1$ and $x\geq 1$, we find $\Psi_{1,\delta}'(x)f(x)=0$ since $f(x)=0$, see \cref{Ass:Deg:f1}.
        \item In the cases $x\in(-1,-1+\delta)$ and $x\in(1-\delta,-1)$, we exploit symmetry and  only consider $x\in(-1,-1+\delta)$, since $f(s)\geq0$ and $|\Psi_{1,\delta}'(s)|=|\Psi_{1,\delta}'(-s)|$ for $s\leq -1+\delta$. Using the representation formula and Assumption \cref{Ass:Deg:f2} we estimate
    \begin{equation*}
     |f(x)\Psi_{1,\delta}'(x)| \leq \delta\Big| \int_x^{-1+\delta}\frac{1}{\delta(2-\delta)}\dy + \int_{-1+\delta}^0 \frac{1}{1-y^2} \dy \Big|  \leq 1+|x|.
    \end{equation*}
    \item In the cases $x\in[-1+\delta,0]$ and $[0,1-\delta]$, symmetry allows us to consider only the case $x\in[-1+\delta,0]$ and we estimate
    \begin{align*}
     |f(x)\Psi_{1,\delta}'(x)| \leq C f(x)\Big|  \int_{0}^x \frac{1}{1-y^2} \dy\Big|\leq C.   
    \end{align*}
    The last estimate holds, since $\frac{f(x)}{|x+1|}\leq C$ for $x\in[-1+\delta,0]$ and especially for $x=-1+\delta$. \smallskip
    \end{enumerate}

    Hence, in total, we obtain the estimate
    \begin{equation*}
      \int_0^T (\Psi_{1,\delta}'(\phi_\delta),f(\phi_\delta))_H \ds \leq C(1+\norm{\phi_\delta}_H).  
    \end{equation*}

\noindent\textbf{Step 2: $L^2(\Omega)$-estimate.}
Inserting the test function $\xi_1=\phi_\delta$ in \cref{Eq:CHdeltaWeakTimePhi}, we directly obtain
\begin{align*}
    \norm{\phi_\delta(T)}_H^2 + \leq \norm{\phi_\delta(0)}_H^2 + \tfrac{1}{2}\norm{\sqrt{m_\delta(\phi_\delta)}\nabla\mu_\delta}_H^2 + C\norm{\phi_\delta}_V^2 + C\norm{f(\phi_\delta)}_H^2.
\end{align*}

    \noindent\textbf{Step 3: Entropy bounds.} Since it holds $\Phi_\delta''\in L^\infty(\R)$ by the boundedness of $m_\delta$, we have $\Phi_\delta' \in C^{0,1}(\R)$ and $\Phi_\delta'(\phi_\delta) \in L^2(0,T;V)$. Moreover, $\Phi_\delta$ is a convex and non-negative functional due to $\Phi_\delta''(x)>0$ for all $x \in \R$. Thus, we can write $$\Phi_\delta(x)=\int_0^x \int_0^y \frac{1}{m_\delta(z)} \,\text{d}z \text{d}y.$$ Using the chain rule, we can compute
	$$\nabla \Phi'_\delta(\phi_\delta)=\Phi_\delta''(\phi_\delta) \nabla \phi_\delta = \frac{\nabla\phi_\delta}{m_\delta(\phi_\delta)} \in H,$$
	and thus, $\xi=\Phi_\delta'(\phi_\delta)\in V$ is a valid test function in the  variational  formulation
	\begin{equation}  \label{Eq:WeakDeg1} \begin{aligned}   \langle \p_t \phi_\delta, \xi \rangle_V &= ( \mu_\delta , \div(m_\delta(\phi_\delta) \nabla \xi))_H + (f,\xi)_H \\ &=  (-\eps^2 \Delta \phi_\delta + \Psi_\delta'(\phi_\delta) + \kappa \nu_\delta, \div (m_\delta(\phi_\delta) \nabla \xi))_H + (f(\phi_\delta),\xi)_H,
 \end{aligned}
	\end{equation}
	for all $\xi \in V$. Thus, it gives
	\begin{equation*} 
	\begin{aligned}
	\langle \p_t \phi_\delta, \Phi_\delta'(\phi_\delta) \rangle_V &=  \big(-\eps^2 \Delta \phi_\delta + \Psi_\delta'(\phi_\delta) + \kappa \nu_\delta,\div (m_\delta(\phi_\delta) \nabla \Phi_\delta'(\phi_\delta))\big)_H + (f(\phi_\delta),\Phi_\delta'(\phi_\delta))_H\\
	&= -\eps^2  \|\Delta \phi_\delta\|_H^2  - (\nabla \Psi_\delta'(\phi_\delta),  \nabla\phi_\delta)_H + \kappa (\nu_\delta,\Delta \phi_\delta)_H + (f(\phi_\delta),\Phi_\delta'(\phi_\delta))_H \\
	&= -\eps^2  \|\Delta \phi_\delta\|_H^2  - (\Psi_\delta''(\phi_\delta) ,|\nabla\phi_\delta|^2)_H - \kappa (\nabla\nu_\delta,\nabla(\phi_\delta-\bar\phi_\delta))_H + (f(\phi_\delta),\Phi_\delta'(\phi_\delta))_H.
	\end{aligned}
	\end{equation*} 
We integrate both sides, apply the chain rule and use the test function $\xi_3=\phi_\delta-\bar\phi_\delta$ in \cref{Eq:CHdeltaWeakTimenu}, which yields
	\begin{equation*} 
	\begin{aligned}
&\|\Phi_\delta(\phi_\delta(t))\|_{L^1(\Omega)}
	+\eps^2 \|\Delta \phi_\delta\|_{L^2(H)}^2   + \int_0^t (\Psi_\delta''(\phi_\delta) ,|\nabla\phi_\delta|^2)_H\ds + \kappa\|\phi_\delta-\bar\phi_0\|_{L^2(H)}^2 \\ &= \|\Phi_\delta(\phi_0)\|_{L^1(\Omega)} + \int_0^t (f(\phi_\delta),\Phi_\delta'(\phi_\delta))_H \ds
	\end{aligned}
	\end{equation*} 
The remaining term on the right-hand side involving the force $f$ can be treated as in the case of the energy estimate since $\Phi'_\delta$ and $\Psi'_\delta$ have the same singular behavior. The combination of all estimates yields
 \begin{align*}
  &\mathcal{E}(\phi_\delta(T)) + \frac{1}{2}\norm{\phi_\delta(T)}_H^2 + \|\Phi_\delta(\phi_\delta(t))\|_{L^1(\Omega)} + \frac{1}{2}\int_0^T m_\delta(\phi_\delta(s))|\nabla\mu_\delta(s)|^2  + \eps^2|\Delta\phi(s)|^2 \ds \\
  &\leq \mathcal{E}(\phi_\delta(0)) + \|\Phi_\delta(\phi_\delta(0))\|_{L^1(\Omega)} + C\int_0^T 1 + \norm{f(\phi_\delta)}_H^2 + \norm{\phi_\delta}_V^2 \ds. 
 \end{align*}
Using the boundedness of $f$, see \cref{Ass:Deg:f1}, we obtain the a priori estimates by a Gronwall argument. Finally, we note that the property $\Phi_\delta(\phi_0) \leq \Phi(\phi_0)$ a.e. follows due to $m_\delta(\phi_0) \geq m(\phi_0)$ a.e., which gives the desired $\delta$-uniform bound.
	\end{proof}
	
	\begin{lemma} \label{Lem:DegMobEst2} Let \cref{As:Deg} hold. Then it yields	$\|(|\phi_\delta|-1)_+\|_{L^\infty(H)} \leq C \sqrt{\delta}$ where $x_+=\max\{0,x\}$.
	\end{lemma}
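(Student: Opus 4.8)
The plan is to establish a pointwise lower bound of the form $\Phi_\delta(x) \geq \frac{(|x|-1)_+^2}{C\delta}$ and then integrate it against the $\delta$-uniform entropy estimate from \cref{Lem:DegMobEst}. The mechanism is that the entropy $\Phi_\delta$ grows quadratically outside the interval $[-(1-\delta),1-\delta]$ with a coefficient that blows up like $1/\delta$, precisely because the mobility degenerates linearly at $\pm 1$.

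First I would record the consequence of the degeneracy. Since $m \in W^{1,\infty}(-1,1)$ with $m(\pm 1)=0$ by \cref{Ass:Deg:Mob}, the Lipschitz bound gives $m(1-\delta) = m(1-\delta)-m(1) \leq \|m'\|_{L^\infty(-1,1)}\,\delta$ and likewise $m(-1+\delta)\leq \|m'\|_{L^\infty(-1,1)}\,\delta$. Writing $L := \|m'\|_{L^\infty(-1,1)}$, by construction $m_\delta$ equals the constant $m(1-\delta)\leq L\delta$ on $[1-\delta,\infty)$ and the constant $m(-1+\delta)\leq L\delta$ on $(-\infty,-1+\delta]$.

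Next I use the defining relations $\Phi_\delta''=1/m_\delta$ and $\Phi_\delta'(0)=\Phi_\delta(0)=0$. Convexity of $\Phi_\delta$ (as $\Phi_\delta''>0$) together with $\Phi_\delta'(0)=0$ forces $\Phi_\delta'\geq 0$ on $[0,\infty)$, so in particular $\Phi_\delta'(1-\delta)\geq 0$ and $\Phi_\delta(1-\delta)\geq 0$. For $x\geq 1$, a second-order Taylor expansion about $1-\delta$, which is exact because $\Phi_\delta''$ equals the constant $1/m(1-\delta)$ there, yields
$$\Phi_\delta(x) = \Phi_\delta(1-\delta) + \Phi_\delta'(1-\delta)\,(x-(1-\delta)) + \frac{(x-(1-\delta))^2}{2m(1-\delta)} \geq \frac{(x-1)^2}{2L\delta},$$
where I dropped the two non-negative lower-order terms and used $(x-(1-\delta))^2\geq (x-1)^2$ for $x\geq 1$ together with $m(1-\delta)\leq L\delta$. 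The mirror-image argument on $(-\infty,-1]$ gives the same bound, so in total $\Phi_\delta(x)\geq \frac{(|x|-1)_+^2}{2L\delta}$ for all $x\in\R$, the inequality being trivial where $(|x|-1)_+=0$ since $\Phi_\delta\geq 0$.

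Finally, applying this pointwise to $\phi_\delta$, integrating over $\Omega$, and taking the essential supremum in time gives
$$\norm{(|\phi_\delta|-1)_+}_{L^\infty(H)}^2 \leq 2L\delta\,\norm{\Phi_\delta(\phi_\delta)}_{L^\infty(L^1(\Omega))} \leq 2L\,C(T,\phi_0)\,\delta,$$
the last step being the $\delta$-uniform entropy bound \eqref{Eq:EnergyH2} from \cref{Lem:DegMobEst}; taking square roots yields the claim with $C=\sqrt{2L\,C(T,\phi_0)}$. The only delicate point is the linear-in-$\delta$ control of $m(\pm(1-\delta))$, which is exactly where the Lipschitz regularity and the degeneracy $m(\pm 1)=0$ of \cref{Ass:Deg:Mob} enter; the remainder is elementary convexity and an exact Taylor expansion.
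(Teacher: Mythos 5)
Your proposal is correct and follows essentially the same route as the paper: the exact second-order Taylor expansion of $\Phi_\delta$ about $1-\delta$ (valid because $\Phi_\delta''$ is constant there), the Lipschitz/mean-value bound $m_\delta(\pm(1-\delta))\leq \delta\|m'\|_{L^\infty(-1,1)}$, and integration against the $\delta$-uniform entropy bound of \cref{Lem:DegMobEst}. The only difference is cosmetic ordering, plus your explicit remark that the dropped Taylor terms are non-negative, which the paper leaves implicit.
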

		
	\begin{proof}
	Using straightforward computations, we derive for all $x>1$ and $\delta \in (0,1)$ the following lower estimate
	$$\begin{aligned} \Phi_\delta(x) &= \Phi_\delta(1-\delta) + \Phi_\delta'(1-\delta) (x-(1-\delta))+\frac12 \Phi_\delta''(1-\delta) (x-(1-\delta))^2 \\
	&\geq \frac12 \Phi_\delta''(1-\delta) (x-1+\delta)^2 =  \frac{(x-1+\delta)^2}{2m_\delta(1-\delta)}  \geq \frac{(x-1)^2 }{2m_\delta(1-\delta)},
	\end{aligned}$$
    and analogously, it holds $\Phi_\delta(x) \geq  \frac{(x+1)^2}{2m_\delta(\delta-1)}$ for $x<-1$.   Combining these two results gives
    \begin{equation} \label{Eq:LemDeriv} (|x|-1)_+^2 \leq 2 \Phi_\delta(x) \max\{m_\delta(1-\delta),m_\delta(\delta-1)\}, \medskip
    \end{equation}
    for all $x \in \R$. However, we also have $m_\delta(1-\delta)=m(1-\delta)$ and $m(1)=0$, which implies by the mean value theorem
    $$|m_\delta(1-\delta)| = |m(1-\delta)-m(1)| \leq \delta \|m'\|_{L^\infty(-1,1)}, \medskip$$
    and analogously, it holds $|m_\delta(\delta-1)| \leq \delta\|m'\|_{L^\infty(-1,1)}$. 
    Hence, using \cref{Eq:LemDeriv}  we have
	\begin{equation*}\int_\Omega (|\phi_\delta|-1)_+^2 \dx \leq  2 \delta \|m'\|_{L^\infty(-1,1)} \int_\Omega \Phi_\delta(\phi_\delta) \dx,
	\end{equation*}
	a.e. in $(0,T)$. Thus, it yields the desired result after applying the bound of \cref{Lem:DegMobEst}.
	\end{proof}
	
	\noindent 
	Having proved the two lemmata, we are now ready to state and prove the existence theorem in the case of a degenerate mobility.
	\begin{theorem} \label{Thm:DegMob}
	Let \cref{As:Deg} hold. Then there exists a weak solution $(\phi,J)$ with
	$$\begin{aligned}
	\phi &\in H^1(0,T;V') \cap L^\infty(0,T;V) \cap L^2(0,T;H^2(\Omega)) \text{ with } |\phi| \leq 1 \text{ a.e. in } Q_T,\\ 
	J &\in L^2(Q_T)^d, \\ \nu &\in L^\infty(0,T;\mathring{V}),
	\end{aligned}$$
	to the Ohta--Kawasaki equation in the sense that
\begin{subequations}
	\begin{align}\label[equation]{Eq:WeakDegPhi}
	\langle \p_t \phi,\xi_1 \rangle_{L^2(V)}  &=  (J,\nabla \xi_1)_{L^2(H)} + (f(\phi),\xi_1)_{L^2(H)}, \\
	(J,\xi_2)_{L^2(H)}  &= -(\Psi'(\phi) - \eps^2 \Delta \phi + \kappa\nu,\div(m(\phi)  \xi_2))_{L^2(H)}, 
    \label[equation]{Eq:WeakDegJ}\\
    (\nabla\nu,\nabla\xi_3)_{L^2(H)} &= (\phi-\bar\phi,\xi_3)_{L^2(H)} \label[equation]{Eq:WeakDegnu}
	\end{align} \label[equation]{Eq:WeakDeg}
	\end{subequations}
	for all $\xi_1,\xi_3 \in L^2(0,T;V),\xi_2 \in L^2(0,T;V^d) \cap L^\infty(Q_T)^d$ with $\xi \cdot n_\Omega = 0$ on $\p \Omega \times (0,T)$.
	\end{theorem}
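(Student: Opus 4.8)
The plan is to realize the weak solution $(\phi,J,\nu)$ as the limit, as $\delta\to0$, of the regularized solutions $(\phi_\delta,\mu_\delta,\nu_\delta)$ of the auxiliary problem \eqref{Eq:CHdelta}, whose existence is guaranteed by \cref{Thm:PosMob,Thm:Reg} and which obey the $\delta$-uniform bounds of \cref{Lem:DegMobEst}. First I would record two further uniform estimates. Setting the flux $J_\delta:=m_\delta(\phi_\delta)\nabla\mu_\delta$, the uniform boundedness of $m_\delta$ together with the entropy bound of \cref{Lem:DegMobEst} gives
\[
\|J_\delta\|_{L^2(Q_T)}^2\le \|m_\delta\|_{L^\infty}\,\big\|\sqrt{m_\delta(\phi_\delta)}\nabla\mu_\delta\big\|_{L^2(H)}^2\le C,
\]
so $J_\delta$ is bounded in $L^2(Q_T)^d$ independently of $\delta$; testing \eqref{Eq:CHdeltaWeakTimePhi} against arbitrary $\xi_1\in L^2(0,T;V)$ and invoking the boundedness of $f$ then yields a uniform bound for $\pt\phi_\delta$ in $L^2(0,T;V')$.

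Next I would extract (non-relabelled) subsequences with $\phi_\delta\longweak\phi$ weakly-$*$ in $L^\infty(0,T;V)$ and weakly in $L^2(0,T;H^2(\Omega))$, $\pt\phi_\delta\longweak\pt\phi$ weakly in $L^2(0,T;V')$, $J_\delta\longweak J$ weakly in $L^2(Q_T)^d$, and $\nu_\delta\longweak\nu$ weakly-$*$ in $L^\infty(0,T;\ring V)$. Crucially, combining the $L^2(0,T;H^2(\Omega))$ bound with the $V'$-bound on $\pt\phi_\delta$ and the compact embedding $H^2(\Omega)\com V$, the Aubin--Lions lemma delivers \emph{strong} convergence $\phi_\delta\to\phi$ in $L^2(0,T;V)$; in particular $\nabla\phi_\delta\to\nabla\phi$ in $L^2(Q_T)$ and $\phi_\delta\to\phi$ a.e.\ in $Q_T$. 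This pointwise limit, together with \cref{Lem:DegMobEst2}, immediately forces $(|\phi|-1)_+=0$ a.e., i.e.\ $|\phi|\le1$ a.e.\ in $Q_T$, and also yields $m_\delta(\phi_\delta)\to m(\phi)$ and $f(\phi_\delta)\to f(\phi)$ strongly in every $L^p(Q_T)$, $p<\infty$, using $\|m_\delta-\bar m\|_{C(\R)}\lesssim\delta$.

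The heart of the proof is passing to the limit in the chemical-potential relation, which I would \emph{not} attempt directly in $\mu_\delta$ (uncontrolled on the degenerate set) but rather in flux form. For a test field $\xi_2$ with $\xi_2\cdot n_\Omega=0$, integrating by parts and inserting the definition of $\mu_\delta$ gives the exact identity
\[
(J_\delta,\xi_2)_{L^2(H)}=-\big(\Psi_\delta'(\phi_\delta)-\eps^2\Delta\phi_\delta+\kappa\nu_\delta,\ \div(m_\delta(\phi_\delta)\xi_2)\big)_{L^2(H)}.
\]
Writing $\div(m_\delta(\phi_\delta)\xi_2)=m_\delta'(\phi_\delta)\nabla\phi_\delta\cdot\xi_2+m_\delta(\phi_\delta)\div\xi_2$ and using the strong convergences $\nabla\phi_\delta\to\nabla\phi$ and $m_\delta(\phi_\delta)\to m(\phi)$ together with the a.e.\ convergence $m_\delta'(\phi_\delta)\to\bar m'(\phi)$ (the set $\{|\phi|=1\}$ being harmless since $\nabla\phi=0$ there a.e.), I would show $\div(m_\delta(\phi_\delta)\xi_2)\to\div(m(\phi)\xi_2)$ strongly in $L^2(Q_T)$. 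Pairing this strong limit against the weak limits of $\Delta\phi_\delta$ and $\nu_\delta$ then handles the elliptic and nonlocal contributions. \textbf{The main obstacle} is the potential term $\Psi_\delta'(\phi_\delta)$, which is singular near $\pm1$ and carries no uniform $L^2$ bound; the remedy is to keep it grouped with the mobility, estimating the products $\Psi_\delta'(\phi_\delta)m_\delta(\phi_\delta)$ and $\Psi_\delta'(\phi_\delta)m_\delta'(\phi_\delta)$, which are \emph{uniformly bounded} and converge a.e.\ precisely because of the structural assumption $m\Psi''\in C^0([-1,1])$ in \cref{Ass:Deg:Mob}, so that $m\Psi'$ and $m'\Psi'$ extend continuously and vanish at $\pm1$. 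Dominated convergence then identifies the limit with $-(\Psi'(\phi)-\eps^2\Delta\phi+\kappa\nu,\div(m(\phi)\xi_2))_{L^2(H)}$, yielding \eqref{Eq:WeakDegJ}.

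Finally, the remaining equations pass to the limit by linearity: weak convergence of $J_\delta$ and $\pt\phi_\delta$ with strong convergence of $f(\phi_\delta)$ give \eqref{Eq:WeakDegPhi}, while weak-$*$ convergence of $\nu_\delta$ and strong convergence of $\phi_\delta$ (hence $\bar\phi_\delta\to\bar\phi$) give \eqref{Eq:WeakDegnu}. The asserted regularity classes follow from the uniform bounds by weak lower semicontinuity, and the embedding $L^2(0,T;V)\cap H^1(0,T;V')\con C([0,T];H)$ together with uniqueness of limits yields the attainment of the initial datum $\phi(0)=\phi_0$.
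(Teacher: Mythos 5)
Your overall strategy coincides with the paper's: regularize via $(m_\delta,\Psi_\delta)$, collect the $\delta$-uniform bounds from \cref{Lem:DegMobEst}, get strong convergence of $\phi_\delta$ in $L^2(0,T;V)$ from Aubin--Lions with the $L^2(0,T;H^2(\Omega))$ bound, deduce $|\phi|\leq 1$ from \cref{Lem:DegMobEst2}, and pass to the limit in the flux form by testing the $\mu_\delta$-equation with $\div(m_\delta(\phi_\delta)\xi_2)$. All of that is sound and matches the paper.

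The gap is in your treatment of the singular convex part of the potential. You propose to expand $\div(m_\delta(\phi_\delta)\xi_2)=m_\delta'(\phi_\delta)\nabla\phi_\delta\cdot\xi_2+m_\delta(\phi_\delta)\div\xi_2$ and to control the products $\Psi_\delta'(\phi_\delta)m_\delta(\phi_\delta)$ and $\Psi_\delta'(\phi_\delta)m_\delta'(\phi_\delta)$, asserting that $m\Psi'$ \emph{and} $m'\Psi'$ extend continuously to $[-1,1]$ because $m\Psi''\in C^0([-1,1])$ by \cref{Ass:Deg:Mob}. The first product is indeed controlled, but the second is not: take $m(x)=1-x^2$ and $\Psi_1''(x)=(1-x^2)^{-1}$ (Flory--Huggins type), so that $m\Psi_1''\equiv 1$ is continuous, yet $\Psi_1'(x)=\tfrac12\log\tfrac{1+x}{1-x}$ and $m'(x)=-2x$, whence $m'\Psi_1'$ blows up logarithmically at $\pm 1$. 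Your dominated-convergence argument therefore fails for the term $\int_{Q_T}\Psi_{1,\delta}'(\phi_\delta)\,m_\delta'(\phi_\delta)\nabla\phi_\delta\cdot\xi_2$. The paper avoids this by \emph{not} applying the product rule to the $\Psi_{1,\delta}'$ contribution: it integrates the whole term $\int_{Q_T}\Psi_{1,\delta}'(\phi_\delta)\div(m_\delta(\phi_\delta)\xi_2)$ by parts once more (the boundary term vanishes since $\xi_2\cdot n_\Omega=0$), which moves the derivative onto $\Psi_{1,\delta}'$ and produces $\int_{Q_T}\Psi_{1,\delta}''(\phi_\delta)\,m_\delta(\phi_\delta)\,\nabla\phi_\delta\cdot\xi_2$; here the coefficient is exactly the product $m_\delta\Psi_{1,\delta}''$, which is uniformly bounded and converges a.e.\ by \cref{Ass:Deg:Mob}, and the argument closes with the a.e.\ convergence of $\nabla\phi_\delta$. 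Replacing your grouping by this one extra integration by parts repairs the proof; the remaining steps of your proposal are correct as written.
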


		We note that \cref{Thm:DegMob} does not state the existence of a tuple $(\phi,\mu,\nu)$ but instead $(\phi,J,\nu)$. This is due to the low regularity of $\mu$ in the degenerate case. In the weak form with the mass flux $J$, all the terms are well-defined.	
	\begin{proof}
	We consider a weak solution $(\phi_\delta,\mu_\delta,\nu_\delta)$ to \cref{Eq:CHdelta} which exists by \cref{Thm:PosMob} and fulfills the $\delta$-uniform bounds \cref{Eq:EnergyH2}.
	 Hence, there are functions $(\phi,J,\tilde J)$ such that
	$$\begin{aligned}	
	\phi_\delta &\longweak \phi &&\text{\rlap{weakly-$*$ }\phantom{strongly} ~in } L^\infty(0,T;V), \\
	\p_t \phi_\delta &\longweak \p_t \phi &&\text{\rlap{weakly}\phantom{strongly} ~in } L^2(0,T;V'), \\
	\phi_\delta &\longrightarrow \phi &&\text{strongly ~in }  L^2(Q_T), \\
		\tilde J_\delta = -\sqrt{m_\delta(\phi_\delta)} \nabla  \mu_\delta &\longweak \tilde J &&\text{\rlap{weakly}\phantom{strongly} ~in }
 L^2(Q_T)^d, \\
J_\delta = -m_\delta(\phi_\delta) \nabla  \mu_\delta &\longweak J &&\text{\rlap{weakly}\phantom{strongly} ~in }
 L^2(Q_T)^d, \\
 \nu_\delta &\longweak \nu &&\text{\rlap{weakly}\phantom{strongly} ~in }
 L^2(V),
	\end{aligned}$$
	as $\delta \to 0$. Here, we used the estimate $$\|J_\delta\|_{L^2(H)}^2 = \|\sqrt{m_\delta(\phi_\delta)} \tilde J_\delta\|_{L^2(H)}^2 \leq \|m_\delta\|_{L^\infty(\R)} \|\tilde J_\delta\|_{L^2(H)}^2 \leq C(T,\phi_0). $$ 
	Due to the higher spatial regularity, see \cref{Thm:Reg} and the improved energy inequality \cref{Eq:EnergyH2}, we have
	\begin{equation} \label{Eq:Weak_Deg} \begin{aligned}\phi_\delta &\longweak \phi &&\text{\rlap{weakly}\phantom{strongly} in } L^2(0,T;H^2(\Omega)), \\
	\phi_\delta &\longrightarrow \phi &&\text{strongly in } L^2(0,T;V),
	\end{aligned}
	\end{equation}
	where we employed the compact embedding $$H^1(0,T;V') \cap  L^2(0,T;H^2(\Omega)) \com L^2(0,T;V).$$ for the Gelfand triple $H^2(\Omega) \com V  \con V'.$ Moreover, using lower semicontinuity and passing to the limit $\delta \to 0$ in $\int_\Omega (|\phi_\delta|-1)_+^2 \dx \leq C\delta$, see \cref{Lem:DegMobEst2}, gives $|\phi(t,x)|\leq 1$ for a.e. $(t,x) \in Q_T$.
	
	We take the limit $\delta \to 0$ in the weak form \cref{Eq:CHdeltaWeakTime} of the solution $(\phi_\delta,\mu_\delta,\nu_\delta)$ and use the weak and strong convergences resulting in
		$$\begin{aligned} \int_0^T \langle \p_t \phi, \xi_1 \rangle_V \dt &= \int_0^T (J,\nabla \xi_1)_H + (f(\phi),\xi_1)\dt, \\
		\int_0^T (\mu,\xi_2)_H \dt &= \int_0^T (\Psi'(\phi) - \eps^2 \Delta \phi + \kappa \nu, \xi_2)_H \dt, \\
        \int_0^T (\nabla\nu,\xi_3)_H \dt &= \int_0^T (\phi-\bar\phi,\xi_3)_H \dt, 
		\end{aligned}$$
		for all $\xi_1,\xi_3\in L^2(0,T;V)$ and $\xi_2 \in  L^2(Q_T)$. 
  
  It remains to show that it holds
		$$J=-m(\phi) \nabla( \Psi'(\phi) -\eps^2 \Delta \phi+\kappa\nu) ,$$
		in the sense of the weak form \cref{Eq:WeakDegJ}.
We take the test function $\zeta = \div(m_\delta(\phi_\delta) \xi)$ in \cref{Eq:CHdeltaWeakTimeMu} for any $\xi \in L^2(0,T;V^d) \cap L^\infty(Q_T)^d$ with $\xi \cdot n_\Omega =0$ on $\partial \Omega \times (0,T)$. Indeed, the test function is well defined due to	$$\begin{aligned}\|\!\div(m_\delta(\phi_\delta) \xi)\|_{L^2(H)} &\leq \| m_\delta'(\phi_\delta) \nabla \phi_\delta \cdot \xi\|_{L^2(H)} + \|m_\delta(\phi_\delta) \div \xi\|_{L^2(H)} \\
			&\leq \|m_\delta'\|_{L^\infty(\R)} \|\nabla \phi_\delta\|_{L^2(H)} \|\xi\|_{L^\infty(Q_T)} + \|m_\delta\|_{L^\infty(\R)} \|\xi\|_{L^2(V)}. \end{aligned} $$
			Then we have after integration by parts
	\begin{equation} \label{Eq:TestMuDeg}\begin{aligned}
	     -\int_0^T \! (\nabla \mu_\delta,m_\delta(\phi_\delta) \xi)_H \dt  &= \int_0^T\! (\mu_\delta, \div(m_\delta(\phi_\delta) \xi))_H \dt = \int_0^T \!  (\Psi_\delta'(\phi_\delta)-\eps^2\Delta \phi_\delta+\kappa\nu, \div(m_\delta(\phi_\delta) \xi))_H \dt. 
      \end{aligned}
	\end{equation}
	The left-hand side of this equation is equal to $\int_0^T (J_\delta,\xi)_H \dt$ and converges to $\int_0^T (J,\xi)_H \dt$ for all $\xi$ as $\delta \to 0$ due to the weak convergence of $J_\delta$. Hence, we also take the limit $\delta \to 0$ in the right-hand side to match the weak form of $J$, i.e., we have to show
	$$\begin{aligned} &\int_0^T  (\Psi_\delta'(\phi_\delta)-\eps^2\Delta \phi_\delta+\kappa\nu_\delta, \div(m_\delta(\phi_\delta) \xi))_H \dt \longrightarrow \int_0^T (\Psi'(\phi) - \eps^2 \Delta \phi + \kappa\nu,\div(m(\phi) \xi))_H \dt,
 \end{aligned}$$
	as $\delta \to 0$.
	To do so, we rewrite the term on the left-hand side as
		\begin{equation}\begin{aligned}   &\int_{Q_T} \Psi_{1,\delta}'(\phi_\delta) \div(m_\delta(\phi_\delta) \xi) + \Psi_2'(\phi_\delta)\div(m_\delta(\phi_\delta) \xi)  -\eps^2 \Delta \phi_\delta \div(m_\delta(\phi_\delta) \xi) +\kappa\nu_\delta\div(m_\delta(\phi_\delta) \xi)\dd(t,x),	\end{aligned} \label{Eq:Deg3Terms}\end{equation}
		and take the limit $\delta \to 0$ in each of the four terms. \medskip
	\begin{itemize} \itemsep0.3em
	    \item 
	We begin with the second, third and fourth term. We split them once more by employing the weak product rule on $\div(m_\delta(\phi_\delta)\xi)$. We have proven $\phi_\delta \to \phi$ a.e. in $Q_T$ and $m_\delta \to \bar m$ uniformly as $\delta \to 0$ since
	$$|m_\delta(x)-\bar m(x)| \leq \max\{m(1-\delta),m(\delta-1)\} \longrightarrow 0,$$
	for all $x \in \R$ as $\delta \to 0$; note that $\bar m(\phi)=m(\phi)$ due to $|\phi|\leq 1$.
	Moreover, $\Psi_2'(\phi_\delta) \to \Psi_2'(\phi)$ a.e. by the continuity of $\Psi_2'$, $\Delta \phi_\delta \rightharpoonup \Delta \phi$ weakly in $L^2(Q_T)$ by \cref{Eq:Weak_Deg} and thus, we conclude by the Lebesgue dominated convergence theorem
	$$\begin{aligned}
	\int_{Q_T} \Psi_2'(\phi_\delta) m_\delta(\phi_\delta) \div \xi \dd(t,x) &\longrightarrow \int_{Q_T} \Psi_2'(\phi) m(\phi) \div \xi \dd(t,x),
	\\
	\int_{Q_T} \Delta \phi_\delta m_\delta(\phi_\delta) \div\xi \dd(t,x) &\longrightarrow \int_{Q_T}  \Delta \phi m(\phi) \div\xi \dd(t,x), \\
	\int_{Q_T} \nu_\delta m_\delta(\phi_\delta) \div\xi \dd(t,x) &\longrightarrow \int_{Q_T}  \nu m(\phi) \div\xi \dd(t,x).
	\end{aligned}$$
	
	\item Next, we treat the other parts of the product formula, that is, we have to pass to the limit in the terms involving $m_\delta'(\phi_\delta) \nabla \phi_\delta$. 
	Since $m_\delta' \to m_\delta$ uniformly as $\delta \to 0$, we have by the dominated convergence theorem $m_\delta'(\phi_\delta) \nabla \phi_\delta \to m'(\phi) \nabla \phi$ in $L^2(Q_T)^d$ due to the strong convergence of $\nabla \phi_\delta$. Thus, we have as $\delta \to 0$
		$$\begin{aligned}
	\int_{Q_T} \Psi_2'(\phi_\delta) m_\delta'(\phi_\delta) \nabla \phi_\delta \cdot \xi \dd(t,x) &\longrightarrow \int_{Q_T} \Psi_2'(\phi) m'(\phi) \nabla \phi \cdot \xi \dd(t,x),
	\\
	\int_{Q_T}  \Delta \phi_\delta m_\delta'(\phi_\delta) \nabla \phi_\delta \cdot \xi \dd(t,x) &\longrightarrow \int_{Q_T}  \Delta \phi m'(\phi) \nabla \phi \cdot \xi \dd(t,x),
 \\
	\int_{Q_T} \nu_\delta m_\delta'(\phi_\delta) \nabla \phi_\delta \cdot \xi \dd(t,x) &\longrightarrow \int_{Q_T}  \nu m'(\phi) \nabla \phi \cdot \xi \dd(t,x).
	\end{aligned}$$
	
	\item At this point, we only miss the first term of \cref{Eq:Deg3Terms}. We have after integration by parts
	$$\int_{Q_T} \Psi_{1,\delta}''(\phi_\delta) m_\delta(\phi_\delta) \nabla \phi_\delta \cdot \xi \dd(t,x).$$
	The term $m_\delta \Psi''_{1,\delta}$ is uniformly bounded, and it holds $\nabla \phi_\delta \to \nabla \phi$ a.e. in $Q_T$ according to \cref{Eq:Weak_Deg}. Therefore, we have to show
	$$m_\delta(\phi_\delta) \Psi_{1,\delta}''(\phi_\delta) \longrightarrow m(\phi) \Psi_1''(\phi) \text{ a.e. in } Q_T,$$
	and we proceed as in \cite[p.416f]{elliott1996cahn}. If it holds $|\phi|<1$ a.e. in $Q_T$, then the result follows from $m_\delta(\phi)=m(\phi)$ and $\Psi_{1,\delta}(\phi)=\Psi_1(\phi)$. Hence, we consider the case $\phi_\delta \to \phi=1$ a.e. in $Q_T$. If it holds $\phi_\delta \geq 1-\delta$, then it gives
	$$m_\delta(\phi_\delta) \Psi_{1,\delta}''(\phi_\delta) = m(1-\delta) \Psi_{1}''(1-\delta) \longrightarrow m(1) \Psi_1''(1)=m(\phi) \Psi_1''(\phi),$$
	and finally, in the other case of $\phi_\delta \leq 1-\delta$, it yields
	$$m_\delta(\phi_\delta) \Psi_{1,\delta}''(\phi_\delta) = m(\phi_\delta) \Psi_{1}''(\phi_\delta) \longrightarrow m(\phi) \Psi_1''(\phi).$$
 \end{itemize}
	This completes the proof of \cref{Thm:DegMob}.
	\end{proof}

\section{Numerical discretization} \label{Sec:Num}

In this section, we present a fully discrete scheme for the Ohta--Kawasaki system. We establish the existence and uniqueness of a discrete solution that preserves key structural quantities of the model, such as mass balance and energy dissipation. This approach aligns with the principles of structure-preserving discretizations, which are numerical schemes specifically designed to maintain essential properties of the continuous model, including energy dissipation, phase conservation, and thermodynamic consistency. These properties are vital for achieving physically realistic and stable numerical solutions, particularly in long-term simulations of complex systems. Numerous studies have significantly advanced structure-preserving discretizations for various dissipative problems \cite{egger2019structure} and coupled Cahn--Hilliard equations \cite{brunk2022second,brunk2023second,brunk2023variational,brunk2024structure,shimura2020error,brunk2024structure,elbar2024analysis}.  Furthermore, we mention the works \cite{bubba2022nonnegativity,bretin2023multiphase,brunk2023stability,chen2021fully,cherfils2021convergent} on analysis of numerical schemes for Cahn--Hilliard-type equations.

To implement this approach, we utilize the classical energy splitting method for the potential \(\Psi = \Psi_1 + \Psi_2\), which ensures unconditional energy stability for the classical Cahn--Hilliard equation; see \cite{elliott1993global}. Specifically, we treat the contractive (convex) component \(\Psi_1\) implicitly, while the expansive (concave) component \(\Psi_2\) is treated explicitly.

\begin{assumption} \label{As:Num} ~\\[-0.3cm]
\begin{enumerate}[label=(C\arabic*), ref=C\arabic*, leftmargin=.9cm] \itemsep.1em
    \item $\Psi=\Psi_1+\Psi_2$ with $\Psi_i\in C^2(\mathbb{R};\mathbb{R})$ such that $\Psi_1''\geq 0$ and $\Psi_2''\leq 0$ with growth condition $$|\Psi_i'(x)|\leq C_\Psi(1+|x|^r) \text{ where } 0\leq r\leq \begin{cases}
        2d/(d-2),\quad d\geq 3, \\
        p \in [0,\infty),\quad d\in\{1,2\}.
    \end{cases}$$  \label{Ass:Num:Pot}
    \item $m \in C^0(\R)$ such that $0< m_0 \leq m(x) \leq m_\infty$ for all $x \in \R$ for some constants $m_0,m_\infty<\infty$. \label{Ass:Num:Mob}
    \item $f \in C^{0}(\R;\R_{\geq 0})\cap L^\infty(\R)$. \label{Ass:Num:F}
\end{enumerate}
\end{assumption}

As a preparatory step, we introduce the relevant notation and assumptions regarding the discretization strategy for the Ohta--Kawasaki equation. \medskip 

\noindent\textbf{Spatial discretization.}
Regarding the spatial discretization, we require that $\Th$ is a geometrically conforming partition of $\Omega$ into simplices. We denote the space of continuous, piecewise linear functions over $\Th$ and the mean-free subspace by
\begin{align*}
    \Vh &:= \{v \in H^1(\Omega)\cap C^0(\bar\Omega) : v|_K \in P_1(K) \quad \forall K \in \Th\}, \\ \Vhm &:= \{v \in \Vh : \bar v = 0\}.
\end{align*}

In the following, we introduce the discrete inverse Laplacian with a mobility function in the spirit of \cite{Barrett1999}. For a given $q\in\Vh$ we introduce the discrete, weighted inverse Laplacian $\Delta_{h,m}^{-1}:\mathring{H}^{-1}(\Omega)\to\Vhm$ by
\begin{align*}
  ( m(q)\nabla(-\Delta_{h,m})^{-1}v,\nabla w )_{\!H} = (v,w)_{\!H}, \quad \forall w\in\Vhm.
\end{align*} 
The special case of $m\equiv 1$ is denoted by $(-\Delta_h)^{-1}$. This operator defines a discrete $H^{-1}(\Omega)$-norm via
\begin{align*}
  \norm{v}_{H^{-1}_{h,m}(\Omega)}^2 &:= ( \nabla(-\Delta_{h,m})^{-1}v,\nabla (-\Delta_{h})^{-1}v)_{\!H} = (m(\phi_1^n)\nabla(-\Delta_{h,m})^{-1}v,\nabla(-\Delta_{h,m})^{-1}v)_{\!H},
\end{align*}
and an associated interpolation inequality is given by
\begin{equation*}
  \norm{v}_H^2 = (m(q)\nabla(-\Delta_{h,m})^{-1}v, \nabla v)_{\!H} \leq \norm{\sqrt{m(\phi_1^n)}}_{L^\infty(\Omega)}\norm{v}_{H^{-1}_{h,m}(\Omega)}\norm{\nabla v}_{H}.  
\end{equation*}
In the case $m=1$ we denote $\norm{\cdot}_{H^{-1}_{h,m}(\Omega)}$ by $\norm{\cdot}_{H^{-1}_{h}(\Omega)}$. \medskip

\noindent\textbf{Time discretization.}
We divide the time interval $[0,T]$ into uniform steps with step size $\tau>0$ and introduce $$\Itau:=\{0=t^0,t^1=\tau,\ldots, t^{n_T}=T\},$$ where $n_T=\tfrac{T}{\tau}$ is the absolute number of time steps. We denote by $\Pi^1_c(\Itau;X)$ and $\Pi^0(\Itau;X)$ the spaces of continuous piecewise linear and piecewise constant functions on $\Itau$ with values in the space $X$, respectively. By $g^{n+1}$ and $g^n$ we denote the evaluation/approximation of a function $g$ in $\Pi^1_c(\Itau)$ or $\Pi^0(\Itau)$ at $t=\{t^{n+1},t^n\}$, respectively. We introduce the following operators and abbreviations: \medskip
\begin{itemize} \itemsep-.5em
    \item The time difference and the discrete time derivative, respectively, are denoted by
\begin{equation*}
	d^{n+1}g := g^{n+1} - g^n, \qquad d^{n+1}_\tau g := \frac{g^{n+1}-g^n}{\tau}.
\end{equation*}
\item The convex-concave splitting of the Cahn--Hilliard potential $\Psi$ is denoted by
\begin{equation*}
\Psi'(\phi_h^{n+1},\phi_h^n):=\Psi'_{1}(\phi_h^{n+1}) + \Psi'_{2}(\phi_h^{n}).
\end{equation*}
\item The discrete energy is defined as
\begin{equation*}
    \mathcal{E}(\phi_h^k) := \int_\Omega \frac{\eps^2}{2} |\nabla \phi_h^k|^2 + \Psi(\phi_h^k) +\frac{\kappa}{2} (\phi_h^k-\bar\phi_h^k)(-\Delta_h)^{-1} (\phi_h^k-\bar\phi_h^k) \, \dd x.
\end{equation*}
\end{itemize}


Next, we state the fully discrete scheme of the Ohta--Kawasaki equation.

\begin{problem}\label{prob:okscheme}
	Let the initial data $\phi_{h,0}\in \Vh$ be given. Find $\phi_h\in \Pi^1_c(\Itau;\Vh)$ and $(\mu_{h},\nu_h)\in \Pi^0(\Itau;\Vh\times\Vhm)$ that satisfy the variational system
	\begin{align}
        (\dtau\phi_h,\xi_{1,h})_{\!H} & + ( m(\phi_h^n)\nabla\mu_h^{n+1},\nabla\xi_{1,h} )_{\!H} = ( f(\phi_h^{n}),\xi_{1,h} )_{\!H}, \label{eq:pg1} \\      \eps^2(\nabla\phi_h^{n+1},\nabla\xi_{2,h} )_{\!H} &+  ( \Psi'(\phi_h^{n+1},\phi_h^n),\xi_{2,h} )_{\!H} + \kappa( \nu_h^{n+1},\xi_{2,h})_{\!H} = ( \mu_h^{n+1}, \xi_{2,h})_{\!H}, \label{eq:pg2}\\
        ( \nabla \nu_h^{n+1} ,\nabla \xi_{3,h}  )_{\!H} & = ( \phi^{n+1}_h-\bar\phi^{n+1}_h,\xi_{3,h} )_{\!H}, \label{eq:pg3}
	\end{align}
	for every $(\xi_{1,h},\xi_{2,h},\xi_{3,h})\in\Vh\times\Vh\times\Vhm$ and every $0\leq n\leq n_T-1$.
\end{problem}

\begin{theorem} \label{thm:scheme}
Let \cref{As:Pos} and \cref{As:Num} hold. Furthermore, let $h > 0$ and $\tau\leq C_0$ with $C_0>0$  solely depending on the parameters and external forces. Then, for any $\phi_{h,0} \in \Vh$, \cref{prob:okscheme} admits a unique solution $(\phi_h,\mu_h,\nu_h)$. Moreover, any such solution conserves the mass balance and energy-dissipation balance, that is, it holds for every $0 \leq m < n \leq n_T$
\begin{align}
  (\phi_h^{n},1)_{\!H} &= (\phi_h^m,1)_{\!H} + \tau\sum_{k=m}^{n-1}(f(\phi_h^k),1)_{\!H}, \label{thm1:mass:phi}  \\
   \mathcal{E}(\phi_h^{n}) &+ \tau\sum_{k=m}^{n-1}\norm{\sqrt{m(\phi_h^k)}\nabla\mu_h^{k+1}}^2_{H} \leq \mathcal{E}(\phi_h^m) + \tau\sum_{k=m}^{n-1} (f(\phi_h^k),\mu_h^{k+1})_{\!H} . \label{thm1:mass:energy}
 \end{align}
\end{theorem}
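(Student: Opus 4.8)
The plan is to treat the three asserted properties separately: I establish the two balance laws \cref{thm1:mass:phi} and \cref{thm1:mass:energy} first, since they hold for \emph{any} solution of \cref{prob:okscheme} and only require judicious test functions, and then settle existence and uniqueness at each time level by a fixed-point argument and a monotonicity argument, respectively. Throughout I fix $n$ and regard $\phi_h^n$ as given data, so that $m(\phi_h^n)$, $\Psi_2'(\phi_h^n)$ and $f(\phi_h^n)$ are all \emph{known} and the only genuine nonlinearity in the unknown $\phi_h^{n+1}$ is the monotone map $\Psi_1'$. For the mass balance I take $\xi_{1,h}=1$ in \cref{eq:pg1}; the mobility term drops because $\nabla 1=0$, leaving $(\dtau\phi_h,1)_H=(f(\phi_h^n),1)_H$, and summing over $k=m,\dots,n-1$ telescopes to \cref{thm1:mass:phi}. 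In particular this fixes $\bar\phi_h^{n+1}$ independently of the solution, a fact I reuse for uniqueness.

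For the energy balance I test \cref{eq:pg1} with $\xi_{1,h}=\mu_h^{n+1}$ and \cref{eq:pg2} with $\xi_{2,h}=\dtau\phi_h$, and eliminate the common term $(\dtau\phi_h,\mu_h^{n+1})_H$ between them, thereby exposing the dissipation $\norm{\sqrt{m(\phi_h^n)}\nabla\mu_h^{n+1}}_H^2$. Three elementary identities then convert the remaining left-hand terms into increments of $\mathcal{E}$ plus nonnegative remainders: the polarization identity $(a,a-b)=\tfrac12(\norm{a}^2-\norm{b}^2+\norm{a-b}^2)$ applied to $\eps^2(\nabla\phi_h^{n+1},\nabla d^{n+1}\phi_h)_H$; the convex–concave inequalities $\Psi_1'(\phi_h^{n+1})(\phi_h^{n+1}-\phi_h^n)\geq\Psi_1(\phi_h^{n+1})-\Psi_1(\phi_h^n)$ and $\Psi_2'(\phi_h^n)(\phi_h^{n+1}-\phi_h^n)\geq\Psi_2(\phi_h^{n+1})-\Psi_2(\phi_h^n)$ coming from \cref{Ass:Num:Pot}; and, for the nonlocal part, the observation that \cref{eq:pg3} gives $\nu_h^{n+1}=(-\Delta_h)^{-1}(\phi_h^{n+1}-\bar\phi_h^{n+1})$, so that after discarding the constant part (annihilated against the mean-free $\nu_h^{n+1}$) the term $\kappa(\nu_h^{n+1},d^{n+1}\phi_h)_H$ becomes $\tfrac\kappa2$ times the $\norm{\cdot}_{H^{-1}_h(\Omega)}^2$-increment of $\phi_h-\bar\phi_h$ plus a nonnegative remainder. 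Discarding the three remainders, multiplying by $\tau$ and telescoping yields \cref{thm1:mass:energy}.

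For existence at the step $n\to n+1$ I eliminate $\nu_h^{n+1}$ and $\mu_h^{n+1}$ to reduce \cref{eq:pg1}–\cref{eq:pg3} to a single nonlinear equation for $\phi_h^{n+1}$ on the finite-dimensional space $\Vh$, and invoke a corollary of Brouwer's theorem (a continuous field on $\R^N$ whose inner product with the argument is nonnegative on a large sphere vanishes inside the ball). The required coercivity is exactly the one-step version of \cref{thm1:mass:energy}: it controls $\mathcal{E}(\phi_h^{n+1})$ and $\tau\norm{\sqrt{m(\phi_h^n)}\nabla\mu_h^{n+1}}_H^2$ against $\mathcal{E}(\phi_h^n)$ and the forcing work $\tau(f(\phi_h^n),\mu_h^{n+1})_H$. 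Bounding this last term is the crux and is where $\tau\leq C_0$ enters: splitting $\mu_h^{n+1}$ into its mean and mean-free parts, the gradient part is absorbed into the dissipation by the lower bound $m\geq m_0$ from \cref{Ass:Num:Mob} and Young's inequality, while the mean $\bar\mu_h^{n+1}=|\Omega|^{-1}(\Psi_1'(\phi_h^{n+1})+\Psi_2'(\phi_h^n),1)_H$ involves the unknown through $\Psi_1'(\phi_h^{n+1})$; using the growth control of $\Psi_1'$ by $\Psi_1$ (via \cref{Ass:Num:Pot}) this produces a term $\lesssim\tau\,\norm{f(\phi_h^n)}_H\,(1+\int_\Omega\Psi(\phi_h^{n+1}))$ that can be absorbed into the unit-coefficient potential energy inside $\mathcal{E}(\phi_h^{n+1})$ precisely when $\tau$ is smaller than a constant depending only on the data and parameters.

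Uniqueness is then unconditional given the monotone structure. If $(\phi^1,\mu^1,\nu^1)$ and $(\phi^2,\mu^2,\nu^2)$ solve the step with the same $\phi_h^n$, their differences $\delta\phi,\delta\mu,\delta\nu$ satisfy the \emph{linear} system obtained from \cref{eq:pg1}–\cref{eq:pg3}, with $\Psi_1'(\phi^1)-\Psi_1'(\phi^2)$ the only nonlinear remnant; moreover $\delta\phi$ is mean-free because \cref{thm1:mass:phi} fixes the mass. Testing the difference of \cref{eq:pg1} with the (admissible) $(-\Delta_{h,m})^{-1}\delta\phi$ gives $(\delta\mu,\delta\phi)_H=-\tfrac1\tau\norm{\delta\phi}_{H^{-1}_{h,m}(\Omega)}^2\leq 0$, whereas testing the differences of \cref{eq:pg2} with $\delta\phi$ and of \cref{eq:pg3} with $\delta\nu$, together with monotonicity $(\Psi_1'(\phi^1)-\Psi_1'(\phi^2),\delta\phi)_H\geq 0$, gives $(\delta\mu,\delta\phi)_H=\eps^2\norm{\nabla\delta\phi}_H^2+\kappa\norm{\nabla\delta\nu}_H^2+(\Psi_1'(\phi^1)-\Psi_1'(\phi^2),\delta\phi)_H\geq 0$. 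Hence every term vanishes, forcing $\delta\phi=0$, then $\delta\nu=0$, and finally $\delta\mu=0$ from \cref{eq:pg2}. I expect the existence step to be the main obstacle, and specifically the delicate absorption of the forcing work against the potential energy, which both forces the restriction $\tau\leq C_0$ and requires the growth hypotheses of \cref{As:Num} to be used at full strength.
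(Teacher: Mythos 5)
Your handling of the mass balance, the energy--dissipation balance, the a priori bounds and the uniqueness argument coincides with the paper's proof essentially step by step: the telescoping test with $\xi_{1,h}=1$, the elimination of $(\dtau\phi_h,\mu_h^{n+1})_H$ between \cref{eq:pg1} and \cref{eq:pg2}, the polarization identity, the convex--concave inequalities for $\Psi_1,\Psi_2$, the identification of $\kappa(\nu_h^{n+1},d^{n+1}\phi_h)_H$ with the $H^{-1}_h$-increment via \cref{eq:pg3}, the splitting of $\mu_h^{n+1}$ into mean and mean-free parts with $\bar\mu_h^{n+1}$ controlled through $\Psi'$ tested against $1$ (this is exactly where the paper also extracts the restriction $\tau\leq C_0$ via a discrete Gronwall argument), and the uniqueness test functions $((-\Delta_{h,m})^{-1}\delta\phi,\delta\phi,\delta\nu)$ together with the monotonicity of $\Psi_1'$ are all precisely what the paper does.

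The one genuine gap is in your existence step. You propose to eliminate $\mu_h^{n+1}$ and $\nu_h^{n+1}$, reduce to a single equation $F(\phi_h^{n+1})=0$ on $\Vh$, and apply the Brouwer corollary, asserting that the required coercivity of the pairing of $F$ with its argument ``is exactly the one-step version of \cref{thm1:mass:energy}''. It is not: the one-step energy estimate arises from testing \cref{eq:pg1} with $\mu_h^{n+1}$ and \cref{eq:pg2} with $d^{n+1}\phi_h$, whereas the Brouwer corollary for the reduced map requires a sign for $(F(\phi),\phi)_H$. Computing the latter produces, among others, the cross term $\tau(m(\phi_h^n)\nabla M(\phi),\nabla\phi)_H$, where $M(\phi)$ contains the discrete bilaplacian contribution $-\eps^2\Delta_h\phi$ and the projected $\Psi_1'(\phi)$; for non-constant $m(\phi_h^n)$ these terms have no sign and are not dominated on a large sphere, so the hypothesis of the corollary is not verified. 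The paper avoids this by keeping the full system $J(x)=0$ and identifying the argument with the affine test vector $x=(\tau\mu_h^{n+1}+\tau\beta\phi_h^{n+1},\,d^{n+1}\phi_h,\,\kappa(-\Delta_h)^{-1}d^{n+1}(\phi_h-\bar\phi_h))$, whose pairing with $J$ reproduces the energy identity; note in particular the extra component $\tau\beta\phi_h^{n+1}$, which supplies the $L^2$-coercivity in $\phi_h^{n+1}$ that $\mathcal{E}$ alone cannot give (inside the Brouwer argument you may not invoke the mass balance to fix $\bar\phi_h^{n+1}$, since that identity holds for solutions but not for arbitrary points on the sphere). To close your argument you should either adopt the paper's choice of test vector on the full system, or recast the time step as the minimization of a coercive functional over the affine subspace of $\Vh$ with prescribed mean.
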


To enforce the mean-value condition of $\Vhm$, we extend the system by a Lagrange multiplier that imposes the integral mean freedom of $\nu_h^{n+1}$ and $\xi_{3,h}$.

\begin{proof} We prove first the mass and energy-dissipation balances. Afterward, we derive a priori estimates that allow us to show the existence of a solution. Lastly, we show the uniqueness of said solution. \medskip

\noindent\textbf{Step 1: Mass balance.} The balance of mass for $\phi_h$, see \eqref{thm1:mass:phi}, follows directly by inserting the test function $\xi_{1,h}=1\in\Vh$ in  \eqref{eq:pg1}. Thus, we obtain
\begin{align*}
(\phi_h^{n}-\phi_h^{m}, 1)_{\!H} = \sum_{k=m}^{n-1}(\phi_h^{k+1}-\phi_h^{k}, 1)_{\!H} = \tau\sum_{k=m}^{n-1}( d_{\tau}^{k}\phi_h, 1)_{\!H}   
&= \tau\sum_{k=m}^{n-1} (f(\phi_h^{k}),1)_{\!H}.
\end{align*}

\noindent\textbf{Step 2: Energy-dissipation balance.} Regarding the desired energy-dissipation balance, see \cref{thm1:mass:energy}, we consider first a single time-step
\begin{align*}
 \mathcal{E}(\phi_h^{n+1})-\mathcal{E}(\phi_h^n) &= \eps^2(\nabla\phi_h^{n+1},\nabla d^{n+1}\phi_h)_{\!H} + (\Psi'(\phi_h^{n+1},\phi_h^n),d^{n+1}\phi_h)_{\!H} 
 \\ &\quad + \kappa(\nabla (-\Delta_h)^{-1}(\phi_h^{n+1}-\bar\phi_h^{n+1}),\nabla (-\Delta_h)^{-1}d^{n+1}(\phi_h-\bar\phi_h))_{\!H}- \frac{\eps^2}{2}\norm{\nabla d^{n+1}\phi_h}_{\!H}^2 \\
 &\quad  - \frac{\kappa}{2}\norm{d^{n+1}(\phi_h-\bar\phi_h)}_{H^{-1}_h(\Omega)}^2 + (\Psi(\phi_h^{n+1}) - \Psi(\phi_h^{n}) - \Psi'(\phi_h^{n+1},\phi_h^n)d^{n+1}\phi_h,1)_{\!H} \\
 & = (a) + \ldots + (f).
\end{align*}
Let us start with the third term on the right-hand side, namely $(c)$. We insert $\xi_{3,h}=\kappa(-\Delta_h)^{-1}d^{n+1}(\phi_h^{n+1}-\bar\phi_h^{n+1})\in \Vhm$ into \cref{eq:pg3} and find
\begin{align*}
(c) =  \kappa(\phi_h^{n+1}-\bar\phi_h^{n+1},(-\Delta_h)^{-1}d^{n+1}(\phi_h-\bar\phi_h))_{\!H} 
&=  \kappa(\nabla\nu_h^{n+1},\nabla (-\Delta_h)^{-1}d^{n+1}(\phi_h^{n+1}-\bar\phi_h^{n+1}))_{\!H}  \\
& = \kappa(\nu_h^{n+1},d^{n+1}(\phi_h^{n+1}-\bar\phi_h^{n+1}))_{\!H} \\
& = \kappa(\nu_h^{n+1},d^{n+1}\phi_h^{n+1})_{\!H}, 
\end{align*}
where the last equality is due to the mean-freedom of $\nu_h^{n+1}$, that is $\nu_h^{n+1}\in\Vhm$.
Using $(a)$, $(b)$ and $(c)$ by inserting the test function $\xi_{2,h}=d^{n+1}\phi_h\in\Vh$ into \cref{eq:pg2}, we find
\begin{align*}
 (a) + (b) + (c) = (\mu_h^{n+1},d^{n+1}\phi_h)_{\!H}.   
\end{align*}
Finally, inserting $\xi_{1,h}=\tau\mu_h^{n+1}\in\Vh$ into into \cref{eq:pg1} yields
\begin{align*}
 (a) + (b) + (c) = -\tau(m(\phi_h^n)\nabla\mu_h^{n+1},\nabla\mu_h^{n+1})_{\!H} + \tau(f(\phi_h^{n}),\mu_h^{n+1})_{\!H}.  
\end{align*}
We note that the terms $(d)$, $(e)$, $(f)$, that is, the numerical dissipation terms, are non-negative by construction, i.e., it holds that
\begin{align*}
 (d) + (e) + (f) &\leq (\Psi(\phi_h^{n+1}) - \Psi(\phi_h^{n}) - \Psi'(\phi_h^{n+1},\phi_h^n)d^{n+1}\phi_h,1)_{\!H}     = \frac{1}{2}((\Psi_2''(\zeta_2)-\Psi_1''(\zeta_1))d^{n+1}\phi_h,d^{n+1}\phi_h)_{\!H} \leq 0,
\end{align*}
where we used convexity of $\Psi_2$ and concavity of $\Psi_1$.
Hence, in total, we obtain 
\begin{align}
 \mathcal{E}(\phi_h^{n+1}) &+ \tau\norm{\sqrt{m(\phi_h^n)}\nabla\mu_h^{n+1}}^2_{\!H} \leq \mathcal{E}(\phi_h^n) +  \tau(f(\phi_h^n),\mu_h^{n+1})_{\!H},   \label{eq:onestepE} 
\end{align}
which yields the result after summation over the relevant time steps,  \eqref{thm1:mass:energy} \medskip

\noindent\textbf{Step 3: A priori estimates.}
We will derive a priori estimates of discrete solutions by using the energy-dissipation balance \eqref{eq:onestepE}.
To obtain uniform bounds from \eqref{eq:onestepE}, we will have to estimate $ (f^n,\mu_h^{n+1})_{\!H}$ suitably. Indeed, we have by the H\"older and Young inequalities
\begin{align*}
 (f(\phi_h^n),\mu_h^{n+1})_{\!H} &= (f(\phi_h^n),\mu_h^{n+1}-\bar \mu_h^{n+1})_{\!H} + (f(\phi_h^n),\bar\mu_h^{n+1})_{\!H}  \leq \delta\norm{\nabla\mu_h^{n+1}}_{H}^2 + \delta|\bar\mu_h^{n+1}|^2 + C\norm{f(\phi_h^n)}_{H}^2.
\end{align*}
The mean-value of $\mu_h^{n+1}$ can be found using $\xi_{2,h}=1\in\Vh$ in \cref{eq:pg2}, which yields
\begin{align*}
\bar\mu_h^{n+1} = (\mu_h^{n+1},1)_{\!H} &= \eps^2(\nabla\phi_h^{n+1},\nabla 1)_{\!H} + (\Psi'(\phi_h^{n+1},\phi_h^n),1)_{\!H} + \kappa(\nu_h^{n+1},1)_{\!H}   = (\Psi'(\phi_h^{n+1},\phi_h^n),1)_{\!H},
\end{align*}
where we used $\nabla 1=0$ and $\nu_h^{n+1}\in\Vhm$. Further, using the growth condition of $\Psi'$, see \cref{Ass:Num:Pot}, yields
\begin{align}
 \mathcal{E}(\phi_h^{n+1}) &+ \tau (m_0-\delta)\norm{\nabla\mu_h^{n+1}}_{H}^2 \leq \mathcal{E}(\phi_h^{n}) + C\tau(\norm{\phi_h^{n+1}}_{V}^2 + \norm{\phi_h^{n}}_{V}^2). \label{eq:H1phi} 
\end{align}
In the next step, we insert $\xi_{1,h}=\beta\tau\phi_h^{n+1}\in\Vh$ into \cref{eq:pg1} and by the usual estimates we find
\begin{align}
 \frac{\beta}{2}\norm{\phi_h^{n+1}}_{H}^2 \leq  \frac{\beta}{2}\norm{\phi_h^{n}}_{H}^2 + \tau\delta\norm{\nabla\mu_h^{n+1}}_{H}^2 + C\tau\norm{\phi_h^{n+1}}_{V}^2 + C\tau\norm{f^n}^2_{H}. \label{eq:l2phi}
\end{align}
Combining \eqref{eq:H1phi} and \eqref{eq:l2phi} and choosing $\delta= \frac{m_0}{4}$ yields
\begin{align}
 \mathcal{E}(\phi_h^{n+1}) &+ \frac{\beta}{2}\norm{\phi_h^{n+1}}_{H}^2 + \frac{\tau m_0}{2}\norm{\nabla\mu_h^{n+1}}_{H}^2 \leq \mathcal{E}(\phi_h^{n}) + C\tau(\norm{\phi_h^{n+1}}_{V}^2 +  \norm{\phi_h^{n}}_{V}^2 + \norm{f^n}_{H}^2). \label{eq:rawapriori}
\end{align}
The left-hand side of \eqref{eq:rawapriori} can be estimated from below via
\begin{align*}
 (\text{LHS})_{\eqref{eq:rawapriori}} \geq C_1\norm{\phi_h^{n+1}}_V^2 + \norm{\Psi(\phi_h^{n+1})}_{L^1(\Omega)}  + \frac{\kappa}{2}\norm{\phi-\bar\phi}_{H^{-1}_h(\Omega)}^2 =: y^{n+1}. 
\end{align*}
Furthermore, the right-hand side of \eqref{eq:rawapriori} can be estimated from above by
\begin{align*}
 (\text{RHS})_{\eqref{eq:rawapriori}}  \leq  (C+C_2\tau)y^{n} + C_2\tau y^{n+1} + C\tau\norm{f^n}_{H}^2.
\end{align*}
Finally, we set $d^{n+1}=\tfrac{m_0}{2}\norm{\nabla\mu_h^{n+1}}_{H}^2$ and obtain
\begin{align*}
    y^{n+1} + \tau d^{n+1} \leq (1+C\tau)y^{n} + C\tau y^{n+1} + C\tau\norm{f^n}_{H}^2.
\end{align*}
An application of the discrete Gronwall lemma with $\tau\leq \frac{C}{2}$ implies the desired uniform bounds
\begin{align}
 \norm{\phi_h}_{L^\infty(V)}^2 + \norm{\phi_h^{n+1}-\bar\phi_h^{n+1}}_{L^\infty(H^{-1}_h(\Omega))}^2 + \norm{\mu_h}_{L^2(V)}^2 \leq C. \label{eq:apriori1}   
\end{align}
By definition of the discrete inverse Laplace $(-\Delta_h)^{-1}$ and \eqref{eq:pg3}, one finds $\nu_h^{k}=(-\Delta_h)^{-1}(\phi_h^{k}-\bar \phi_h^{k})$. Then, using $\nu_h^{k}\in\Vhm$, i.e. mean free, we directly find 
\begin{equation}
    \norm{\nu_h}_{L^\infty(V)}^2 \leq C. \medskip \label{eq:apriori2} 
\end{equation}

\noindent\textbf{Step 4: Existence of solutions.}
We consider the $n$-th time-step and assume that $\phi_h^{n-1}$ is already known. 
After choosing a basis for $\Vh\times\Vh\times\Vhm$, we rewrite the variational system \eqref{eq:pg1}--\eqref{eq:pg3} in \cref{prob:okscheme} as a nonlinear system in the form of 
$$J(x)=0 \text{ in } \mathbb{R}^{Z},$$ 
with $Z=\mathrm{dim(\Vh)}^2\times\mathrm{dim}(\Vhm)$. In fact, writing $( J(x),x)$ is equivalent to testing the corresponding variational identities with $$x=(\tau\mu_h^{n+1}+\tau\beta\phi_h^{n+1},d^{n+1}\phi_h,\kappa(-\Delta_h)^{-1}d^{n+1}(\phi_h^{n+1}-\bar\phi_h^{n+1})).$$
As a consequence of the latter, we thus obtain 
\begin{align*}
( J(x),x) &\geq y^{n+1} - Cy^n + \tau D^{n+1} - \tau C.
\end{align*}
Using the same arguments as we have used to derive the a priori bounds \eqref{eq:apriori1} and \eqref{eq:apriori2}, we directly obtain $$(J(x),x) \to \infty \text{ for } |x| \to \infty.$$ 
Thus, the existence of a solution follows from a corollary to Brouwer's fixed-point theorem, see \cite[Proposition~2.8]{Zeidler1}. \medskip

\noindent\textbf{Step 5: Uniqueness.}
For the sake of presentation, we will neglect the space discretization index $h$ in the remaining part of the proof. Again, we consider only one time step.
We assume that two different solutions $(\phi_1^{n+1},\mu_1^{n+1},\nu_1^{n+1})$ and $(\phi_2^{n+1},\mu_2^{n+1},\nu_2^{n+1})$ exist. We denote their differences by $\phi^{n+1},\mu^{n+1}$ and $\nu^{n+1}$. Taking the difference in the variational forms yields the following discrete formulations
\begin{align}
(\phi^{n+1},\xi_{1})_{\!H} & + \tau( m(\phi_1^n)\nabla\mu^{n+1},\nabla\xi_{1} )_{\!H} = 0, \label{eq:diffphi}\\
 \gamma( \nabla\phi^{n+1},\nabla\xi_{2} )_{\!H} &+  ( \Psi_{1}''(\zeta)\phi^{n+1},\xi_{2} )_{\!H} + \kappa(\nu^{n+1},\xi_{2})_{\!H} = ( \mu^{n+1}, \xi_{2})_{\!H},\label{eq:diffmu}\\
(\nabla\nu^{n+1},\nabla \xi_{3})_{\!H} & = (\phi^{n+1}-\bar\phi^{n+1},\xi_{3})_{\!H}, \label{eq:diffnu}
\end{align}
which holds for all $(\xi_{1},\xi_{2},\xi_{3})\in\Vh\times\Vh\times\Vhm$.
We note that $\zeta$ is a convex combination of $\phi_1^{n+1}$ and $\phi_2^{n+1}$.
Although $\phi_1^{n+1}$ and $\phi_2^{n+1}$ are not mean free, the difference $\phi^{n+1}$ is mean free, that is, inserting $\xi_{1}=1$ in \eqref{eq:diffphi}. Hence, we are allowed to apply the weighted inverse Laplacian on $\phi^{n+1}.$
Using $\xi_{1}=(-\Delta_{h,m}^{-1})\phi^{n+1}\in\Vh$ with $q_h=\phi_1^{n-1}$, $\xi_{2}=\phi^{n+1}\in\Vh$ and $\xi_{3}=\nu^{n+1}\in\Vhm$ as test functions in \eqref{eq:diffphi}, \eqref{eq:diffmu}, \eqref{eq:diffnu}, respectively, which yields
\begin{align*}
 \norm{\phi^{n+1}}_{H^{-1}_{h,m}(\Omega)}^2 &+ \tau\big(\norm{\nabla\phi^{n+1}}_{H}^2 + ( \Psi_{1}''(\zeta)\phi^{n+1},\phi^{n+1} )_{\!H} + \kappa\norm{\nabla\nu^{n+1}}_{H}^2\big) = 0. 
\end{align*}
Since the terms contained in the left-hand side are non-negative it follows that $\phi^{n+1}=0$. This directly shows that $\nu^{n+1}=\mu^{n+1}=0$, which yields a contradiction and implies the uniqueness result, as stated in \cref{thm:scheme}. This finishes the proof.
\end{proof}

\section{Numerical illustrations} \label{Sec:NumIll}

In this section, we apply the discretization of time and space as described in \cref{prob:okscheme} to some examples. We consider either  the two-dimensional square domain $\Omega=(0,1)^2$ or  the three-dimensional cube domain $\Omega=(0,1)^3$. We consider $h=\tau=10^{-2}$ for all two-dimensional simulations and $h=\tau=10^{-3}$ for all three-dimensional simulations.
At each time step, we solve this system with the Newton method. The procedure has been implemented in the high-performance multiphysics finite element software NGSolve \cite{schoberl2014c++} to obtain the numerical results which we present in this section.

In all simulations, we fix the interfacial width $\eps^2=10^{-3}$, the mobility function $m(\phi)=10^{-14} + \frac{1}{16}(1-\phi^2)^2$ and the potential function
 $\Psi(\phi) = \frac{1}{4}(\phi^2-1)^2$ with $\Psi_1(\phi) = \frac{1}{4}(\phi^4+1)$ and $\Psi_2(\phi)= - \frac{1}{2}\phi^2$.

\subsection{2D experiments}
In this subsection, we consider two exemplary settings and compare the effects of the repulsion strength $\kappa>0$ in the Ohta--Kawasaki system to the case $\kappa=0$, that is, the usual Cahn--Hilliard equation. First, we consider the case of no external force $f=0$, but variable mobility $m$ as chosen above.

\begin{experiment}\label{exp:1} $\phi_0(x)=\dfrac{\cos(2\pi x_1) \cos(2\pi x_2)}{100}$, \quad $f(\phi)=0$, \quad $\kappa\in\{0,10,100\}$.
\end{experiment}

The evolution of the phase-field \(\phi\) for varying values of the repulsion parameter \(\kappa \in \{0,10,100\}\) is illustrated in \cref{pic:exp1}. When \(\kappa = 10\), the system exhibits behavior closely aligned with the Cahn--Hilliard evolution (\(\kappa = 0\)), indicating that moderate repulsion has minimal impact on the general phase-separation dynamics. Notably, the primary distinction lies in the relaxation time of smaller bubbles, which is slightly extended compared to \(\kappa = 0\). In contrast, when \(\kappa = 100\), we observe a markedly different evolution, with phase separation occurring at a significantly slower rate. This high repulsion parameter delays the coalescence of phases, leading to incomplete separation at the chosen final time.
The plot on the left in \cref{pic:exp1struc} confirms that mass conservation is preserved to machine precision (approximately \(10^{-15}\)) across all cases, underscoring the robustness of the numerical scheme. Meanwhile, the energy evolution depicted on the right in \cref{pic:exp1struc} reveals that larger \(\kappa\) values noticeably dampen energy dissipation. This effect of \(\kappa\) suggests an inhibitory influence on phase separation, as greater repulsion requires more energy to overcome stabilizing effects. These observations align with the physical interpretation of \(\kappa\) as a regulator of repulsive interactions, where an increase in \(\kappa\) retards phase progression. \medskip

\begin{figure}[htbp!]
\begin{center}
\begin{tabular}{cM{.24\textwidth}M{.24\textwidth}M{.24\textwidth}}
			&$\kappa=0$&$\kappa=10$&$\kappa=100$ \\
$t=0$ &\includegraphics[trim={39.0cm 10.0cm 22.0cm 10.0cm},clip,scale=0.055]{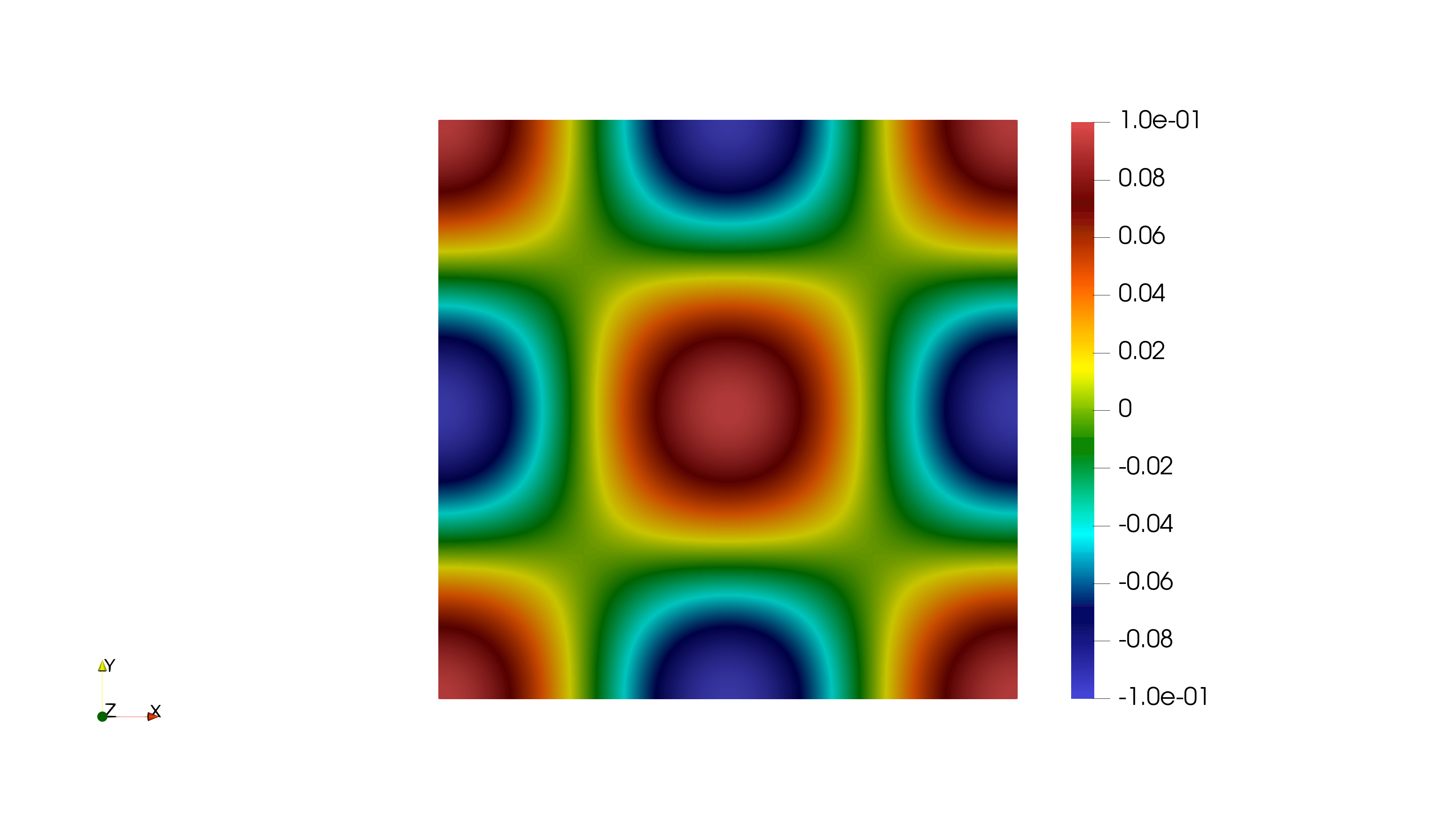}
&\includegraphics[trim={39.0cm 10.0cm 22.0cm 10.0cm},clip,scale=0.055]{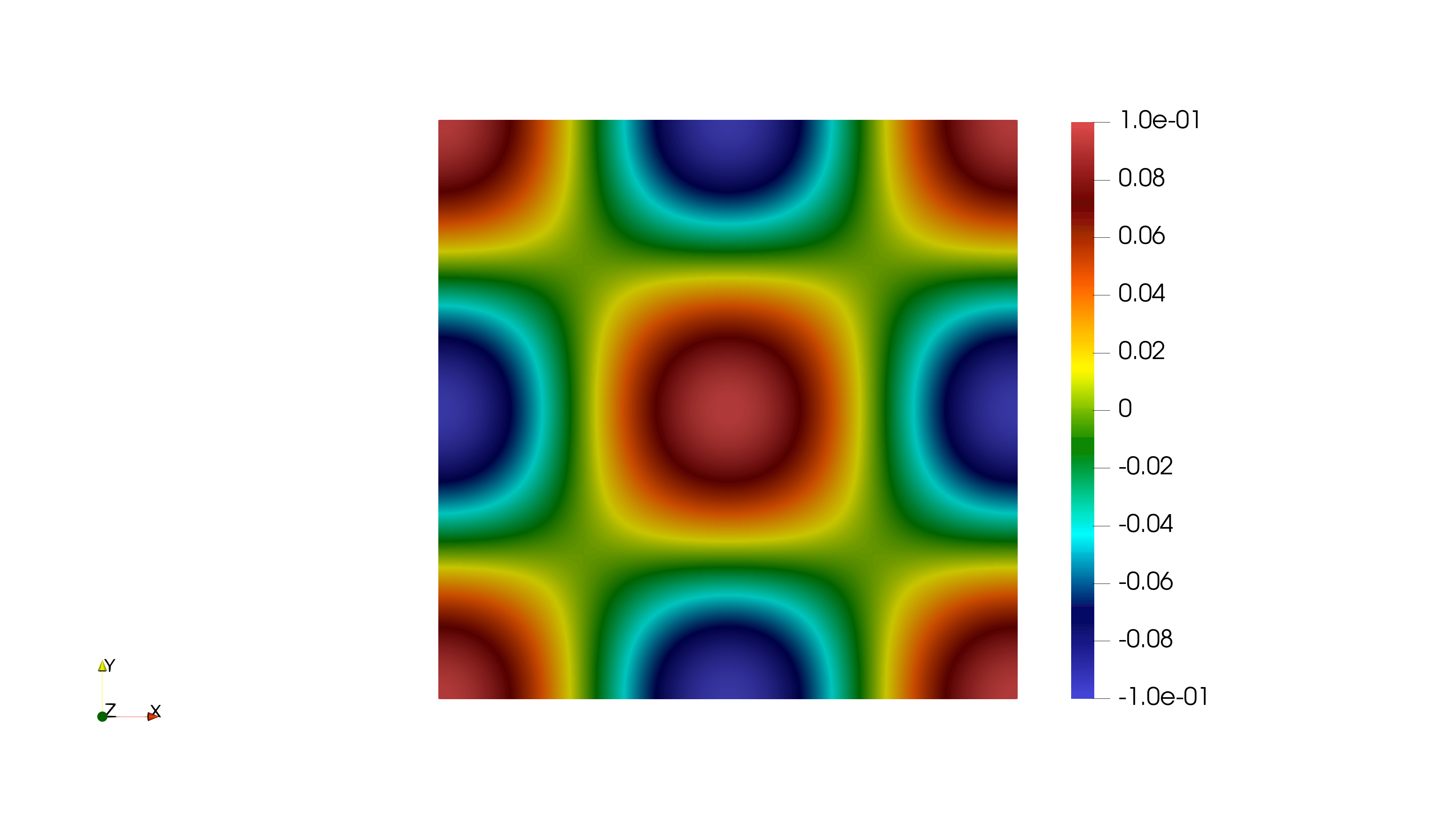}
&\includegraphics[trim={39.0cm 10.0cm 22.0cm 10.0cm},clip,scale=0.055]{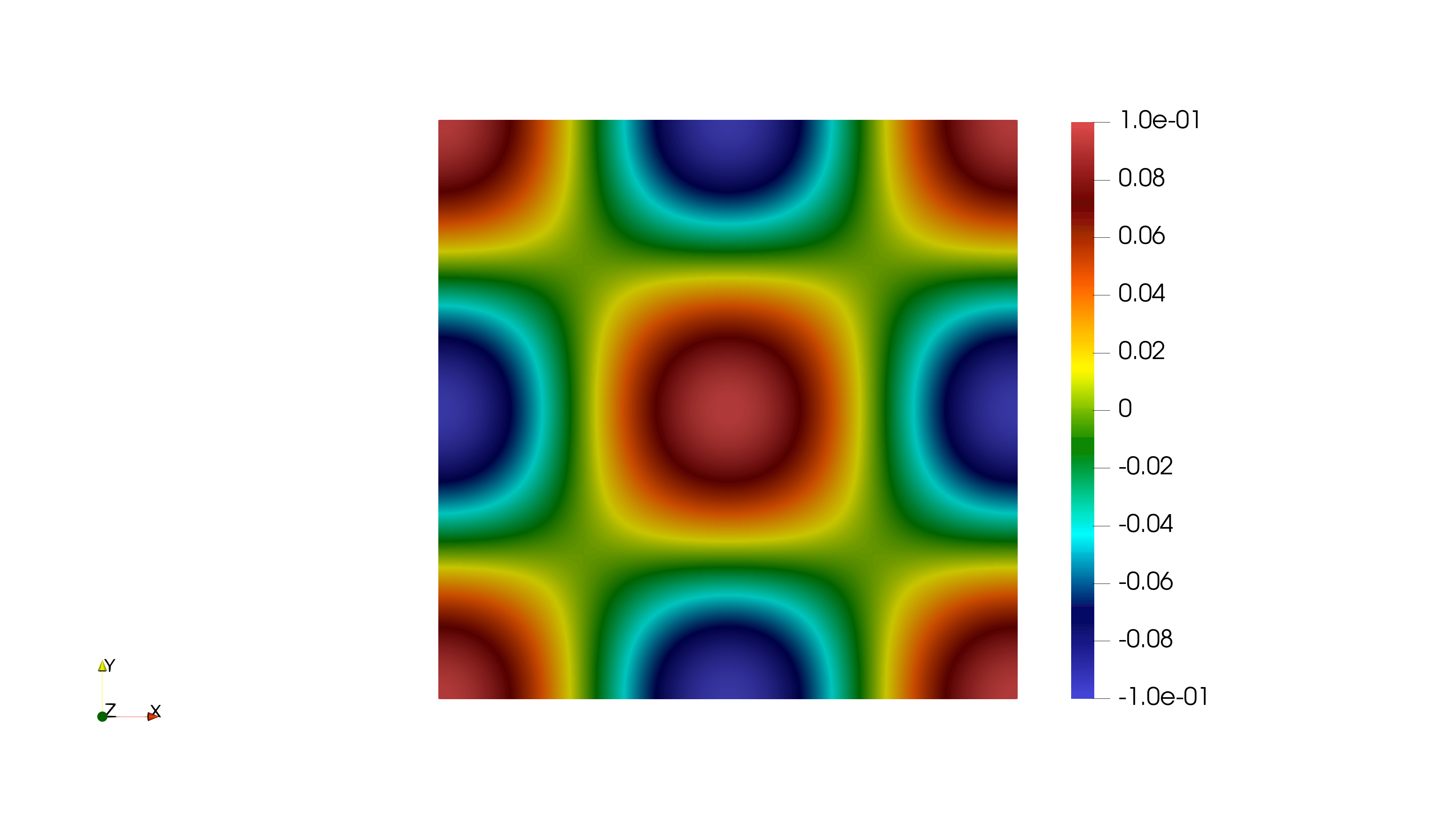}\\[-.1cm]
$t=1$ &\includegraphics[trim={39.0cm 10.0cm 22.0cm 10.0cm},clip,scale=0.055]{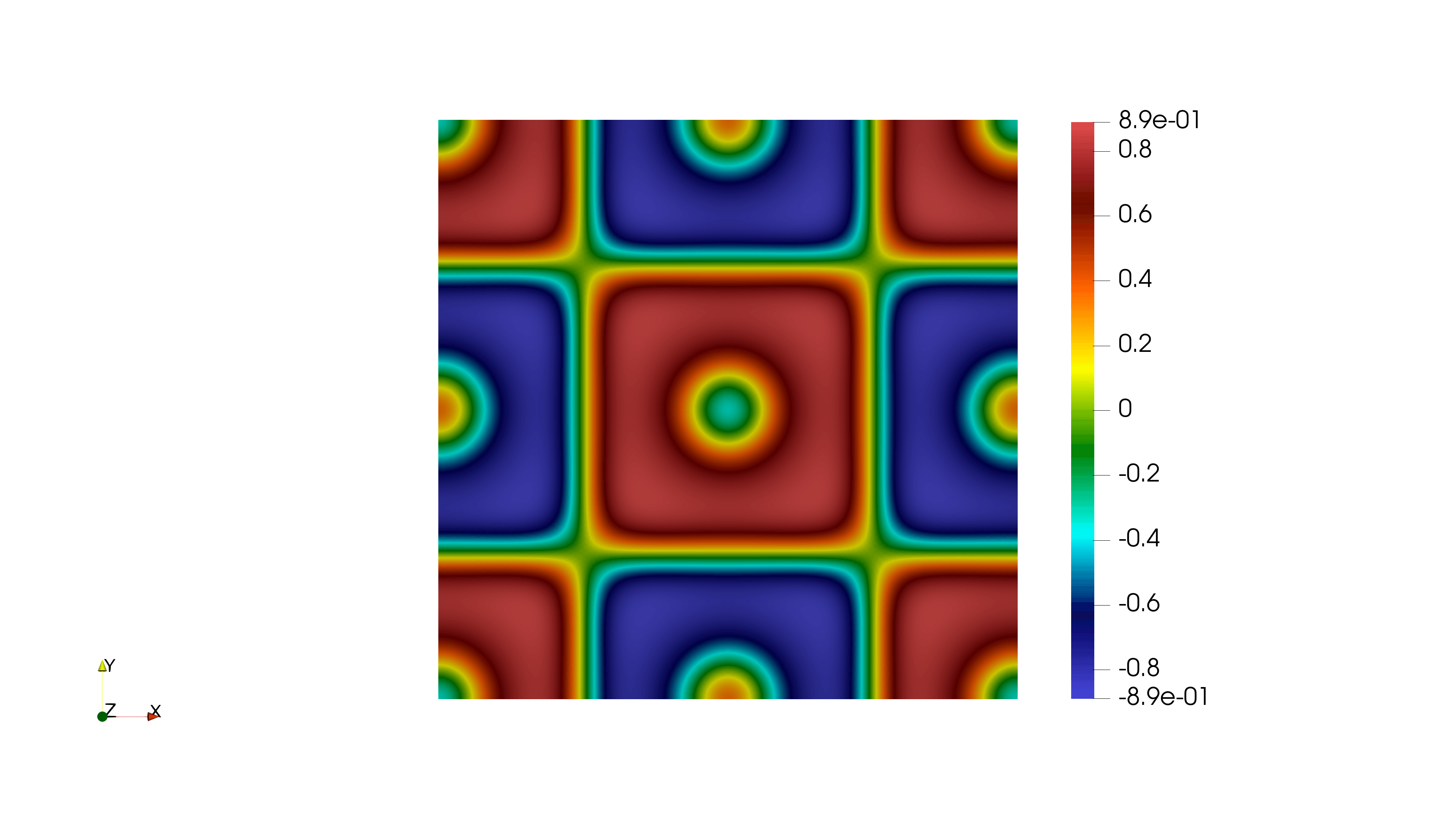}
&\includegraphics[trim={39.0cm 10.0cm 22.0cm 10.0cm},clip,scale=0.055]{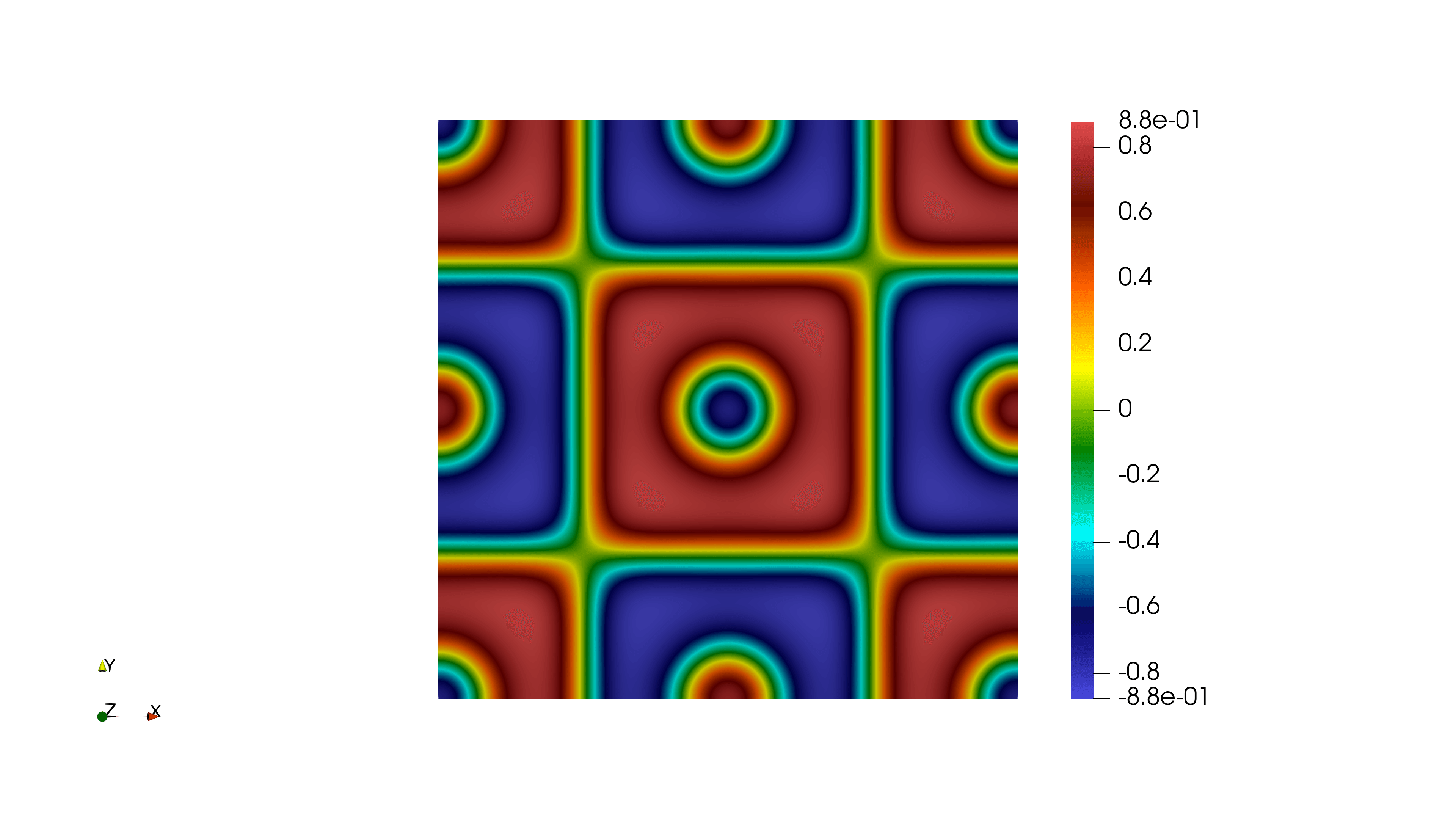}
&\includegraphics[trim={39.0cm 10.0cm 22.0cm 10.0cm},clip,scale=0.055]{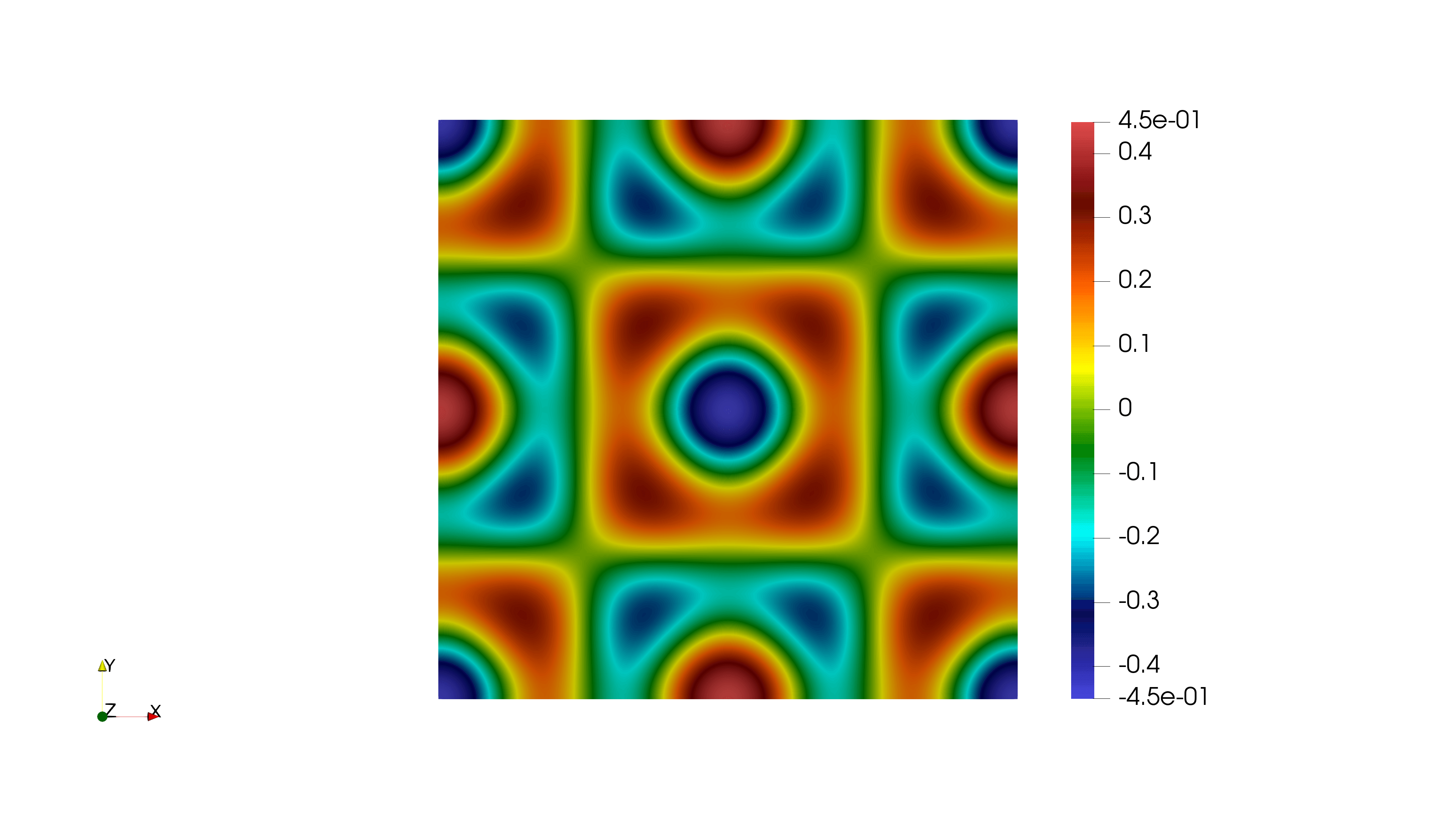}\\[-.1cm]
$t=2$ &\includegraphics[trim={39.0cm 10.0cm 22.0cm 10.0cm},clip,scale=0.055]{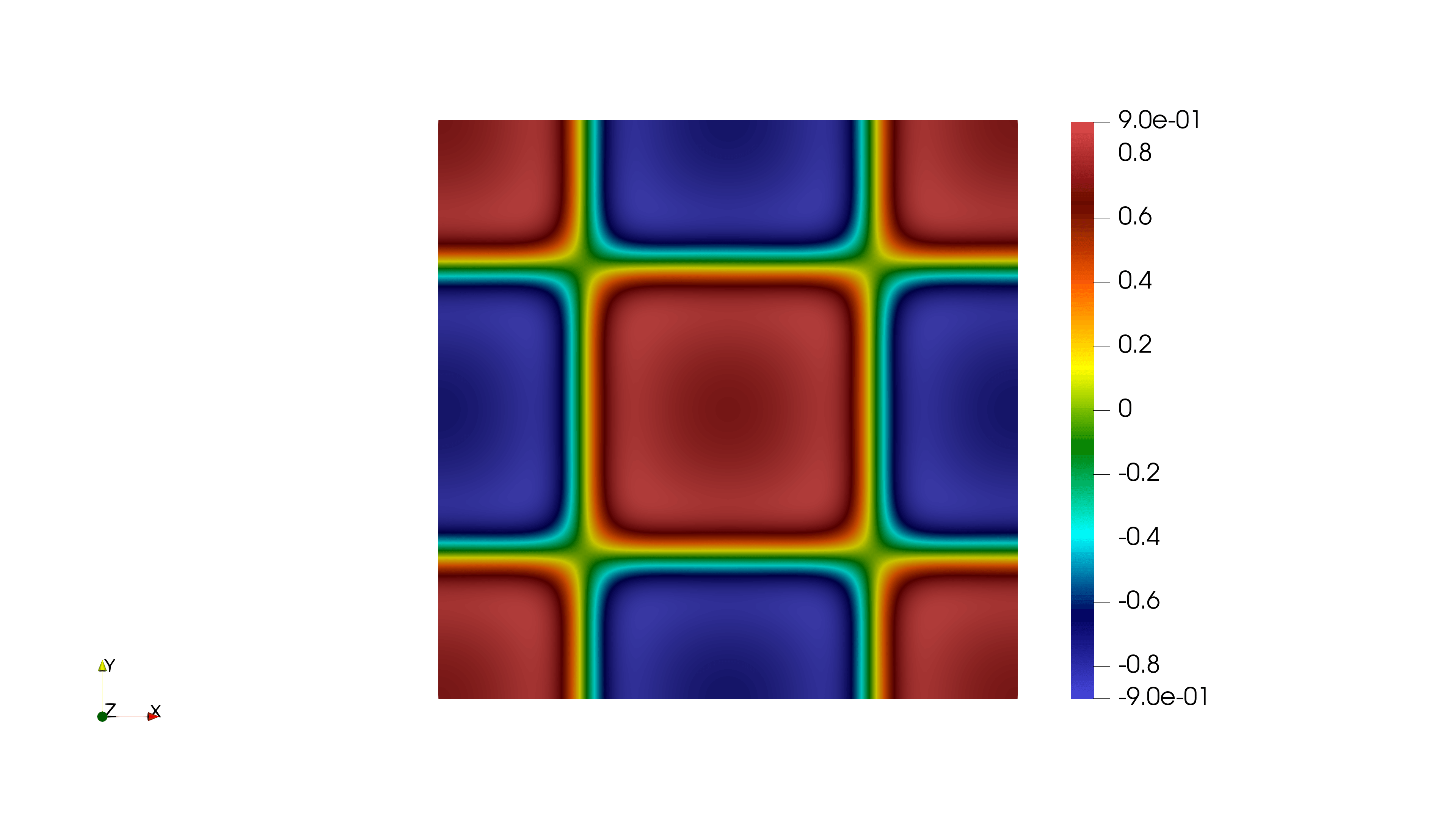}
&\includegraphics[trim={39.0cm 10.0cm 22.0cm 10.0cm},clip,scale=0.055]{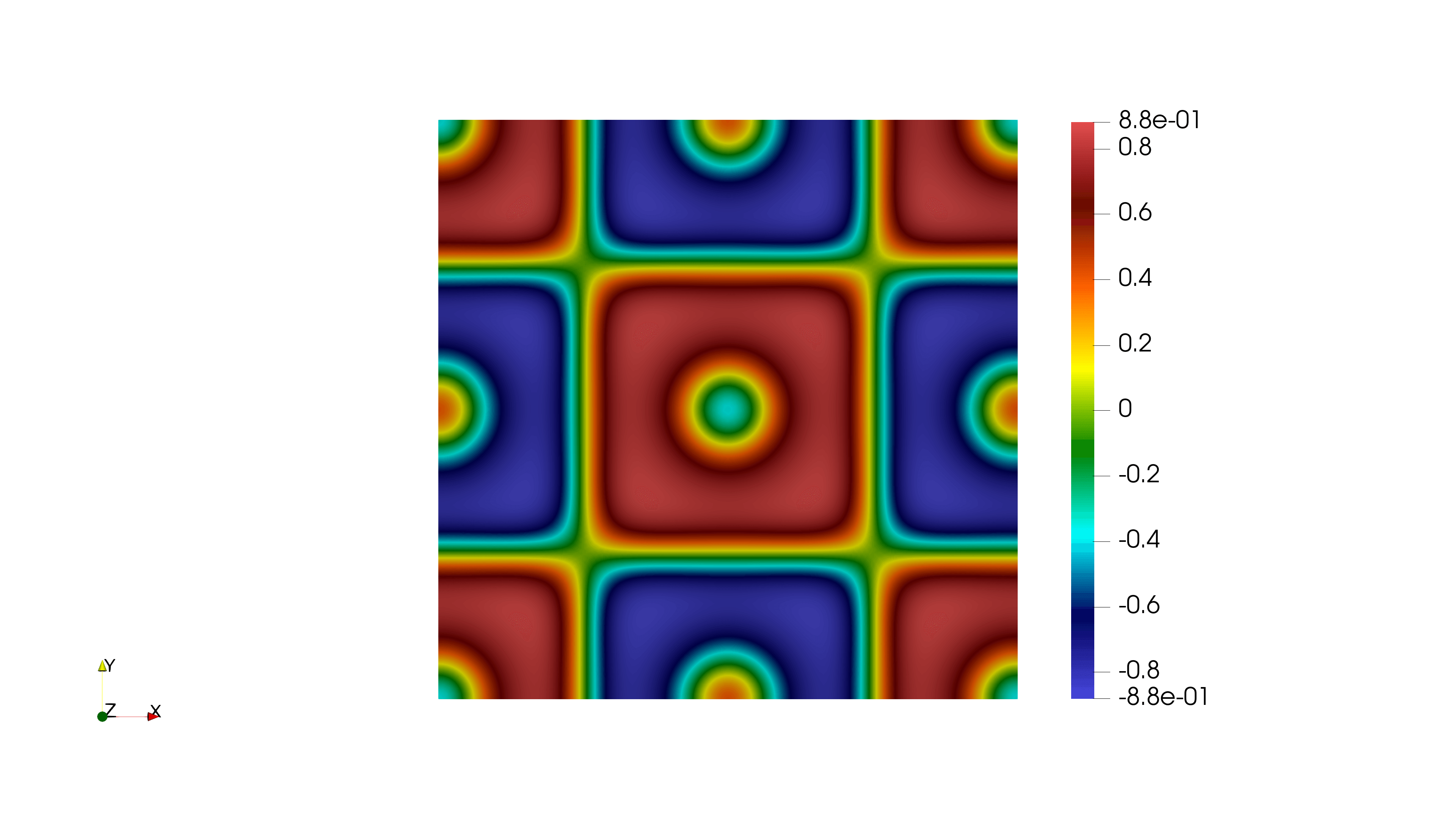}
&\includegraphics[trim={39.0cm 10.0cm 22.0cm 10.0cm},clip,scale=0.055]{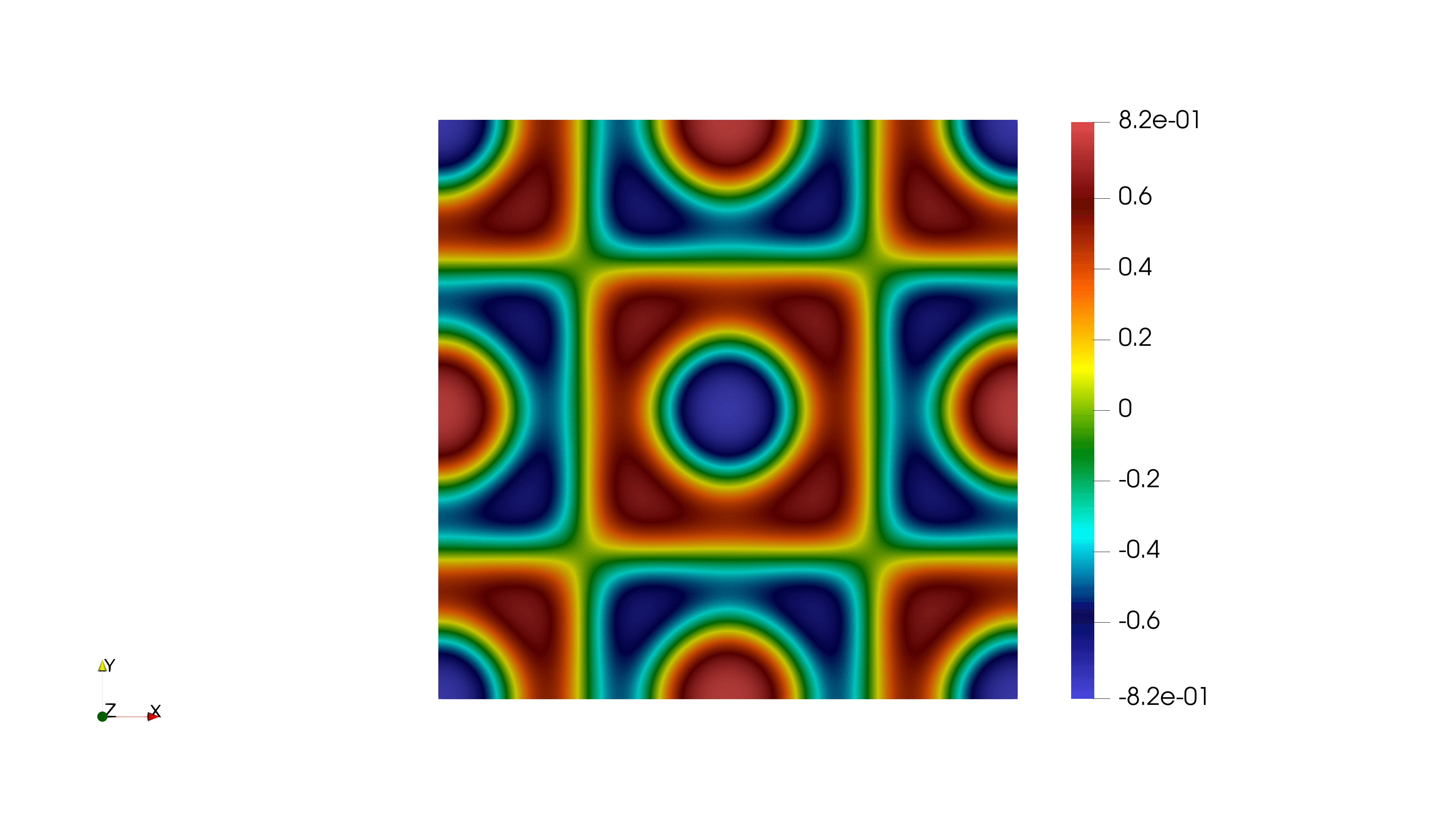}\\[-.1cm]
$t=5$ &\includegraphics[trim={39.0cm 10.0cm 22.0cm 10.0cm},clip,scale=0.055]{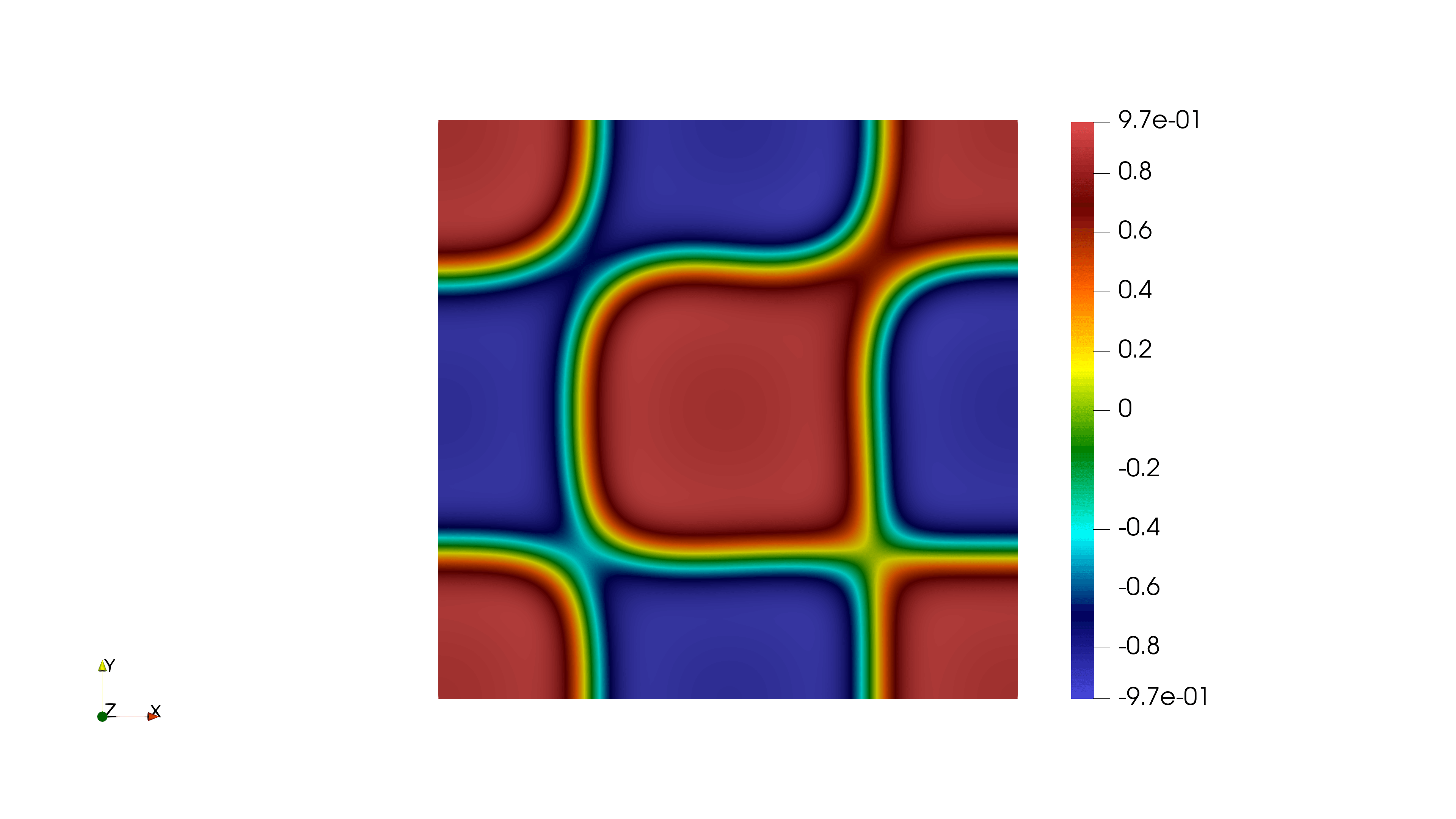}
&\includegraphics[trim={39.0cm 10.0cm 22.0cm 10.0cm},clip,scale=0.055]{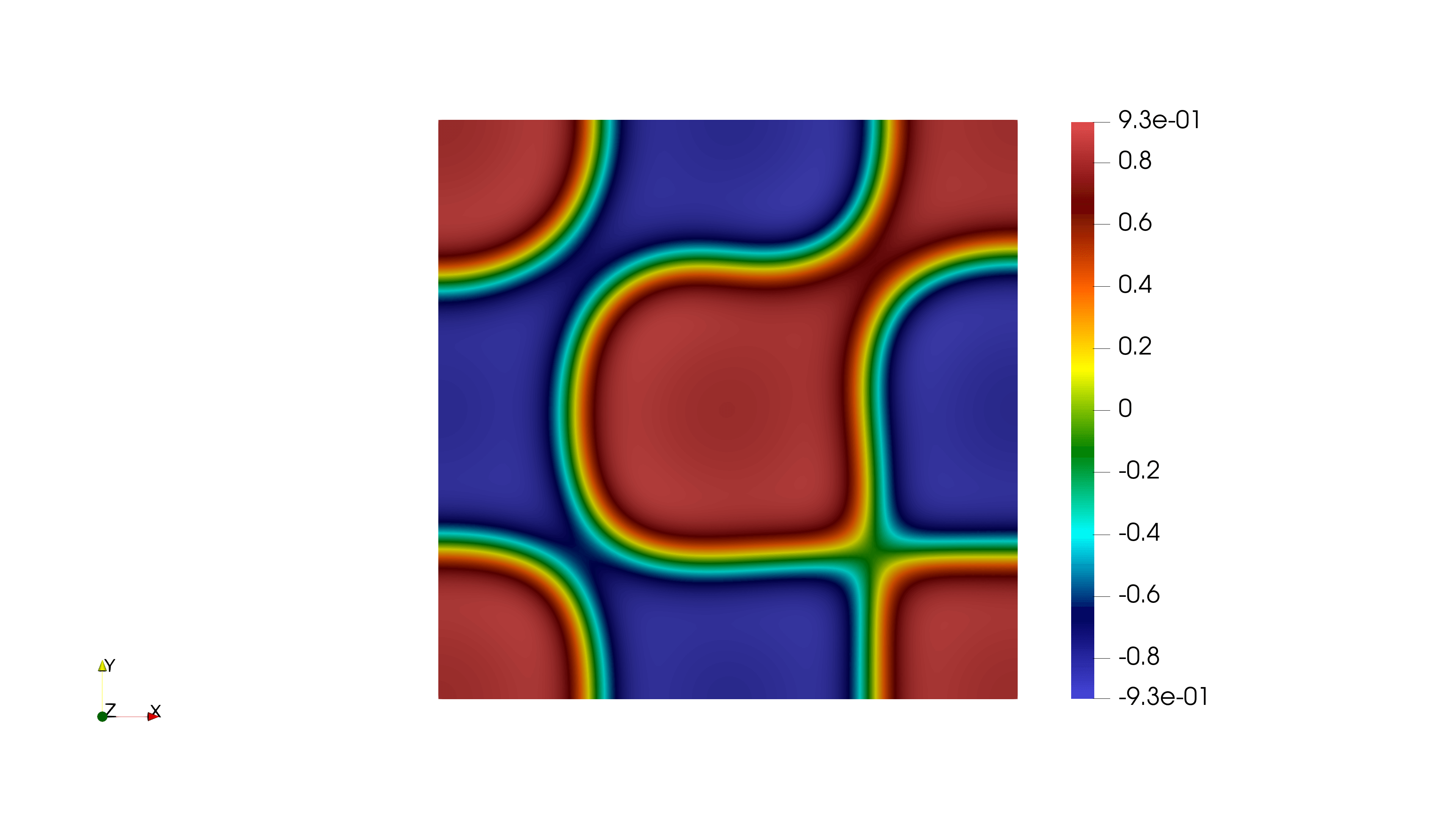}
&\includegraphics[trim={39.0cm 10.0cm 22.0cm 10.0cm},clip,scale=0.055]{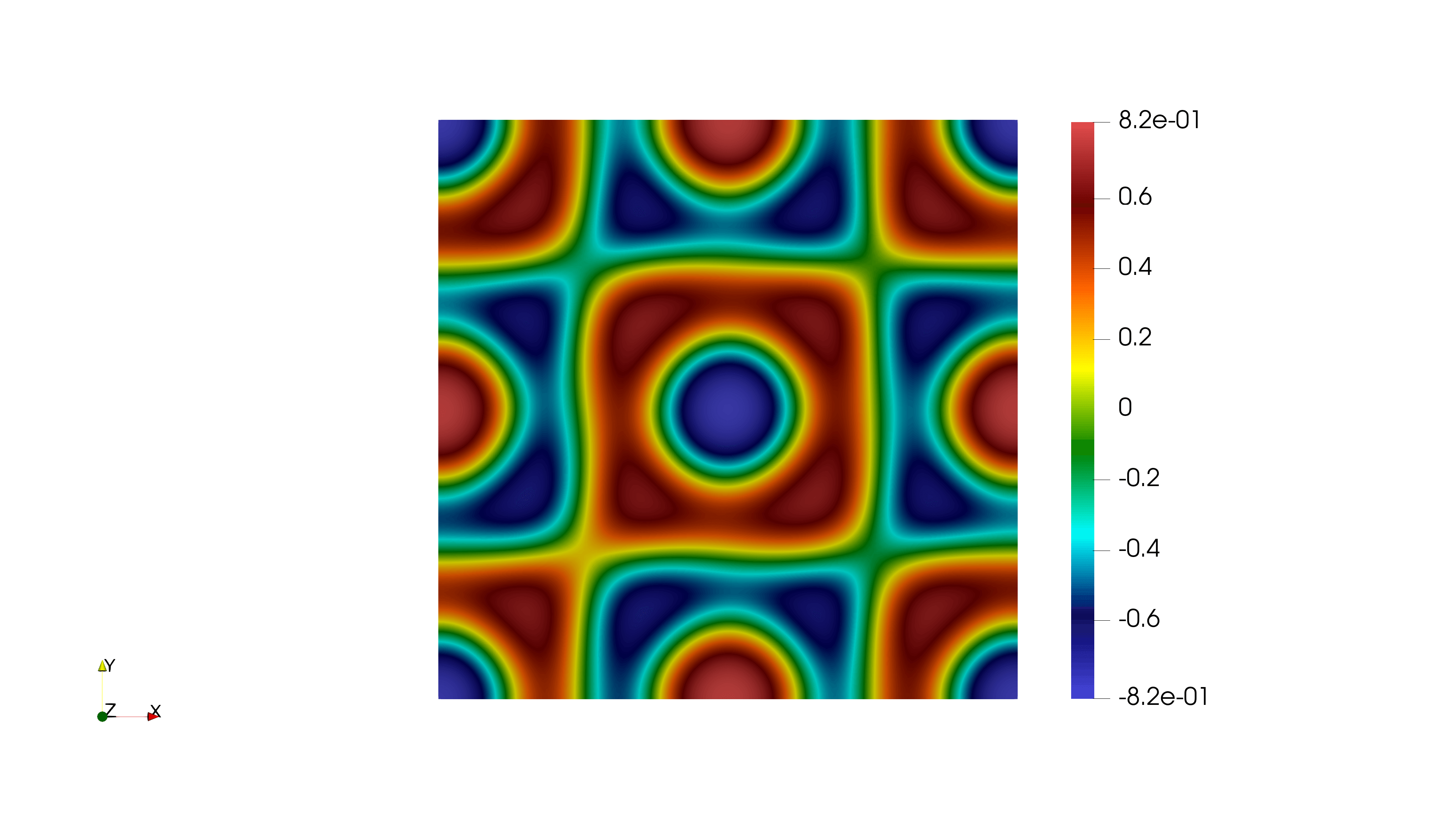}
\end{tabular}
\end{center}
\caption{\cref{exp:1} for the Ohta--Kawasaki model: Snapshots of the phase-field \(\phi\) for \(\kappa \in \{0, 10, 100\}\) (left to right) at times \(t \in \{0, 1, 2, 5\}\) (top to bottom).  \label{pic:exp1}}
\end{figure}

\begin{figure}[htbp!]
\centering
\includegraphics[trim={0.0cm 0.0cm 0.0cm 0.0cm},clip,scale=0.45]{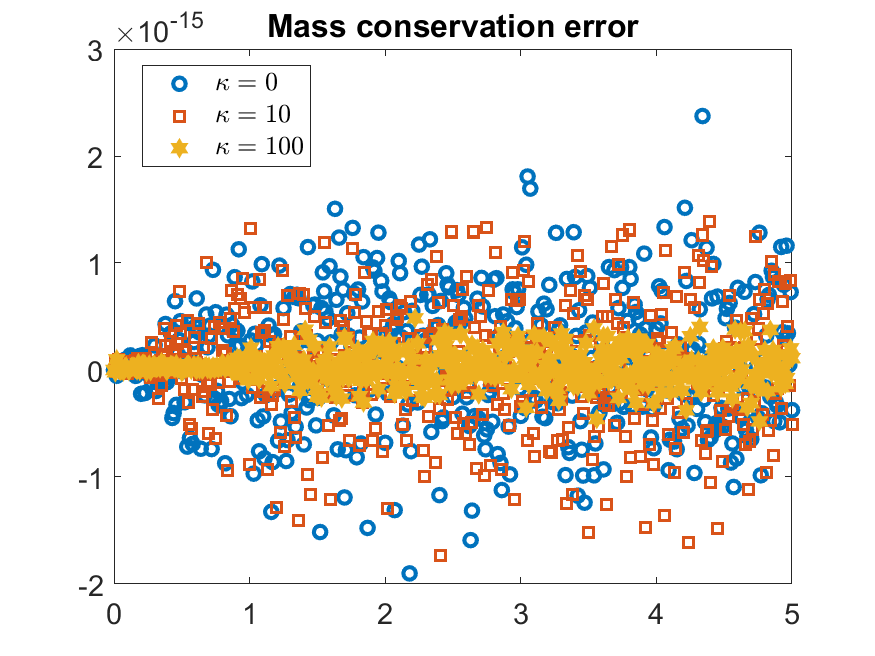}
\includegraphics[trim={0.0cm 0.0cm 0.0cm 0.0cm},clip,scale=0.45]{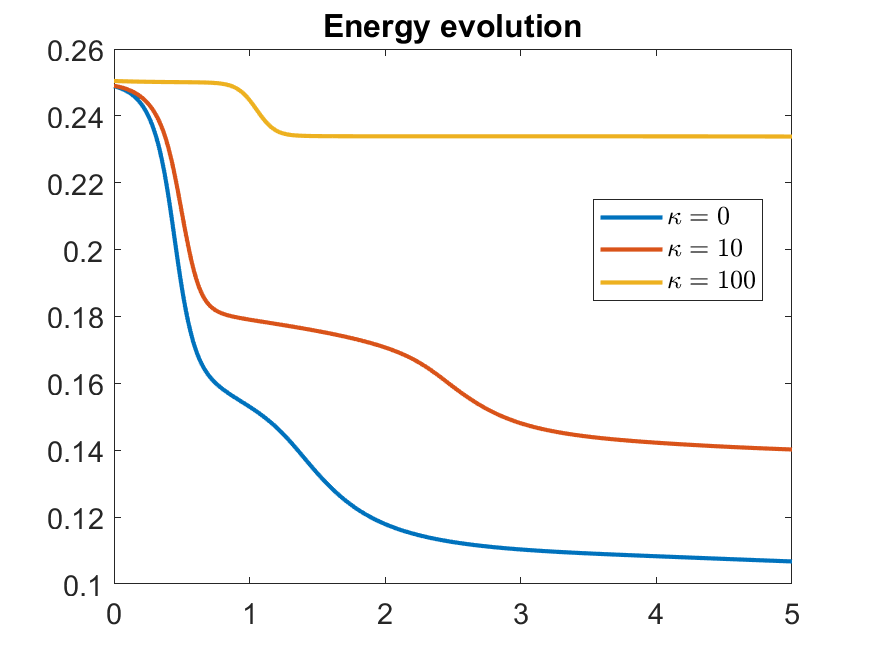}
\caption{\cref{exp:1} for the Ohta--Kawasaki model: Mass conservation error (left) and evolution of the energy $\mathcal{E}$ (right).  \label{pic:exp1struc}}
\end{figure}

In the second experiment, we consider the influence of a logistic growth term $f$ as follows:

\begin{experiment}\label{exp:2} 
$\phi_0(x)=\dfrac{\cos(2\pi x_1) \cos(2\pi x_2)}{100}$, \quad $f(\phi)=\dfrac{1}{10}\max\{0,1-\phi^2\}$, \quad $\kappa\in\{0,10,100\}$.
\end{experiment}

The evolution of the phase-field \(\phi\) is depicted in \cref{pic:exp2}. As in the previous experiment, the case \(\kappa=10\) behaves similarly to the Cahn--Hilliard evolution (\(\kappa=0\)), showing only slight differences in the relaxation dynamics. For \(\kappa=100\), however, the evolution deviates significantly, indicating that the repulsion parameter \(\kappa\) influences the number of energetically favorable bubbles. Similar findings have been reported in \cite{Xu2019} for an Allen--Cahn variant of the system, supporting the connection between \(\kappa\) and the stable bubble count.
In \cref{pic:exp2struc}, we display the evolution of mass and energy over time. By construction, the mass of the phase-field increases and we observe a correlation between \(\kappa\) and the growth rate. This outcome aligns with expectations, since slower phase separation processes yield values in regimes where logistic growth is prominent. Interestingly, despite the presence of an external force, the energy decreases over time, suggesting that the external influence does not preclude energy dissipation but may instead alter the phase separation dynamics. This trend emphasizes the role of \(\kappa\) in modulating the stability of the system and the phase distribution over time. \medskip

\begin{figure}[htbp!]
\begin{center}
\begin{tabular}{cM{.24\textwidth}M{.24\textwidth}M{.24\textwidth}}
			&$\kappa=0$&$\kappa=10$&$\kappa=100$ \\
$t=0$ &\includegraphics[trim={39.0cm 10.0cm 22.0cm 10.0cm},clip,scale=0.055]{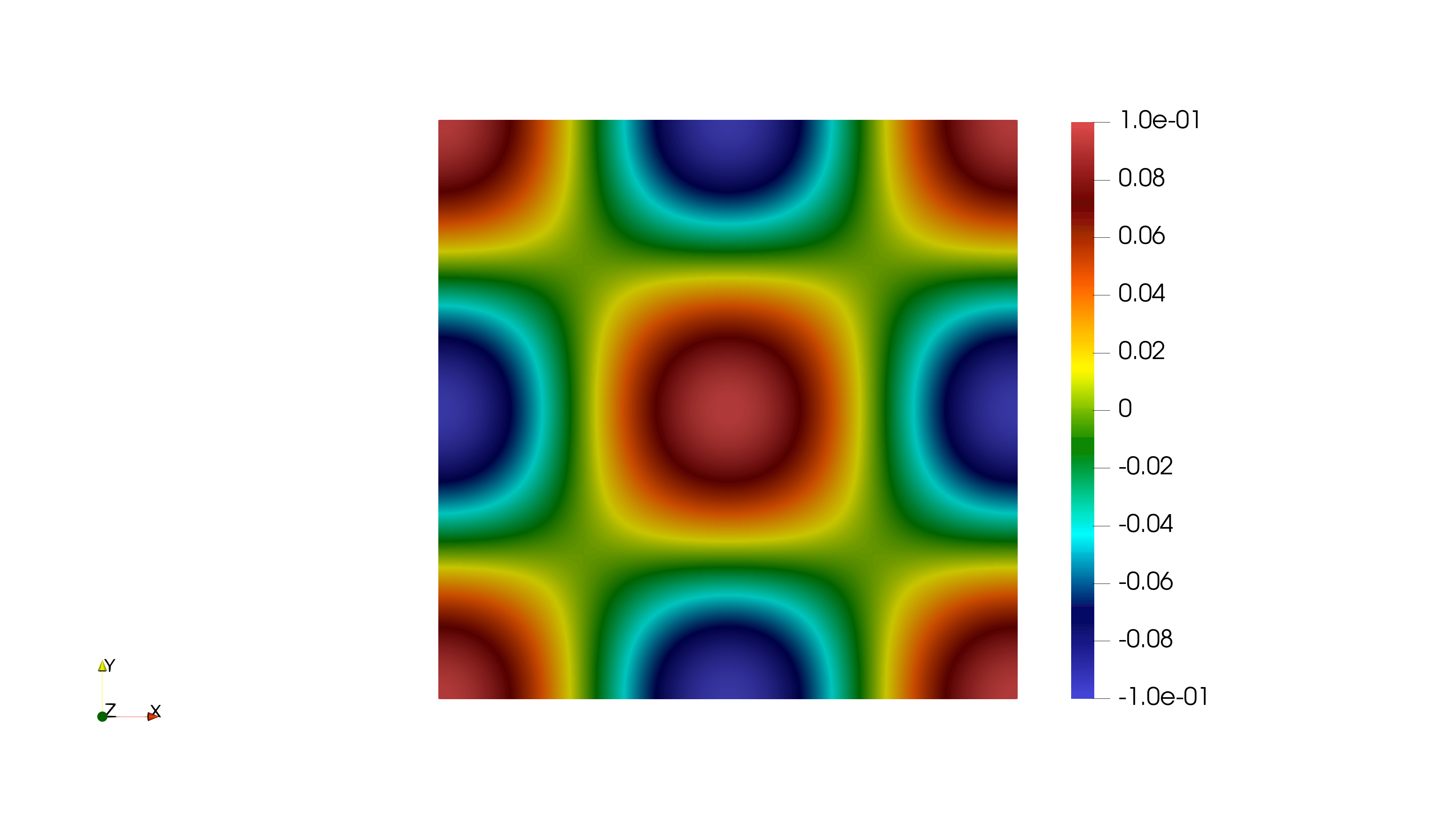}
&\includegraphics[trim={39.0cm 10.0cm 22.0cm 10.0cm},clip,scale=0.055]{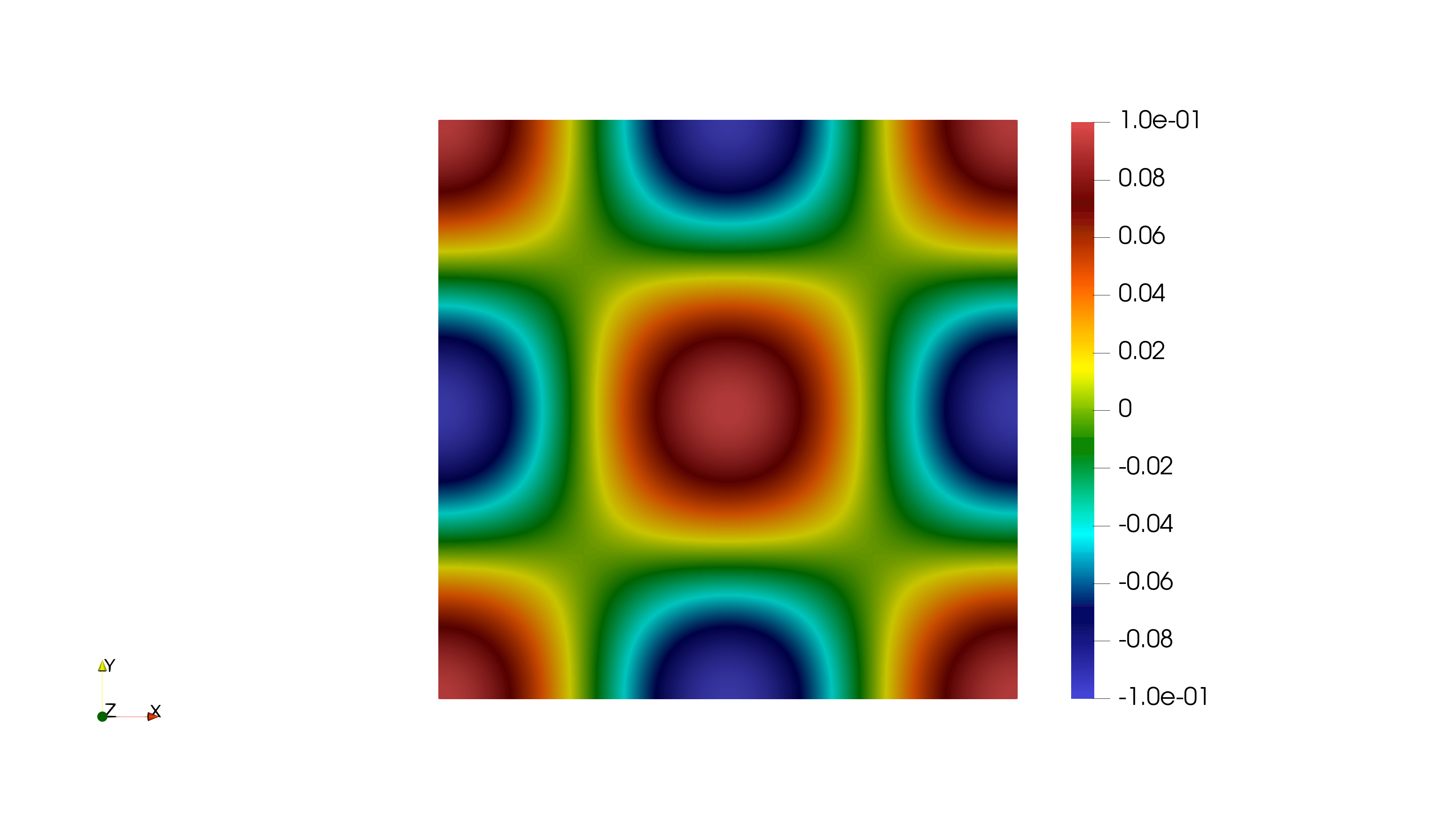}
&\includegraphics[trim={39.0cm 10.0cm 22.0cm 10.0cm},clip,scale=0.055]{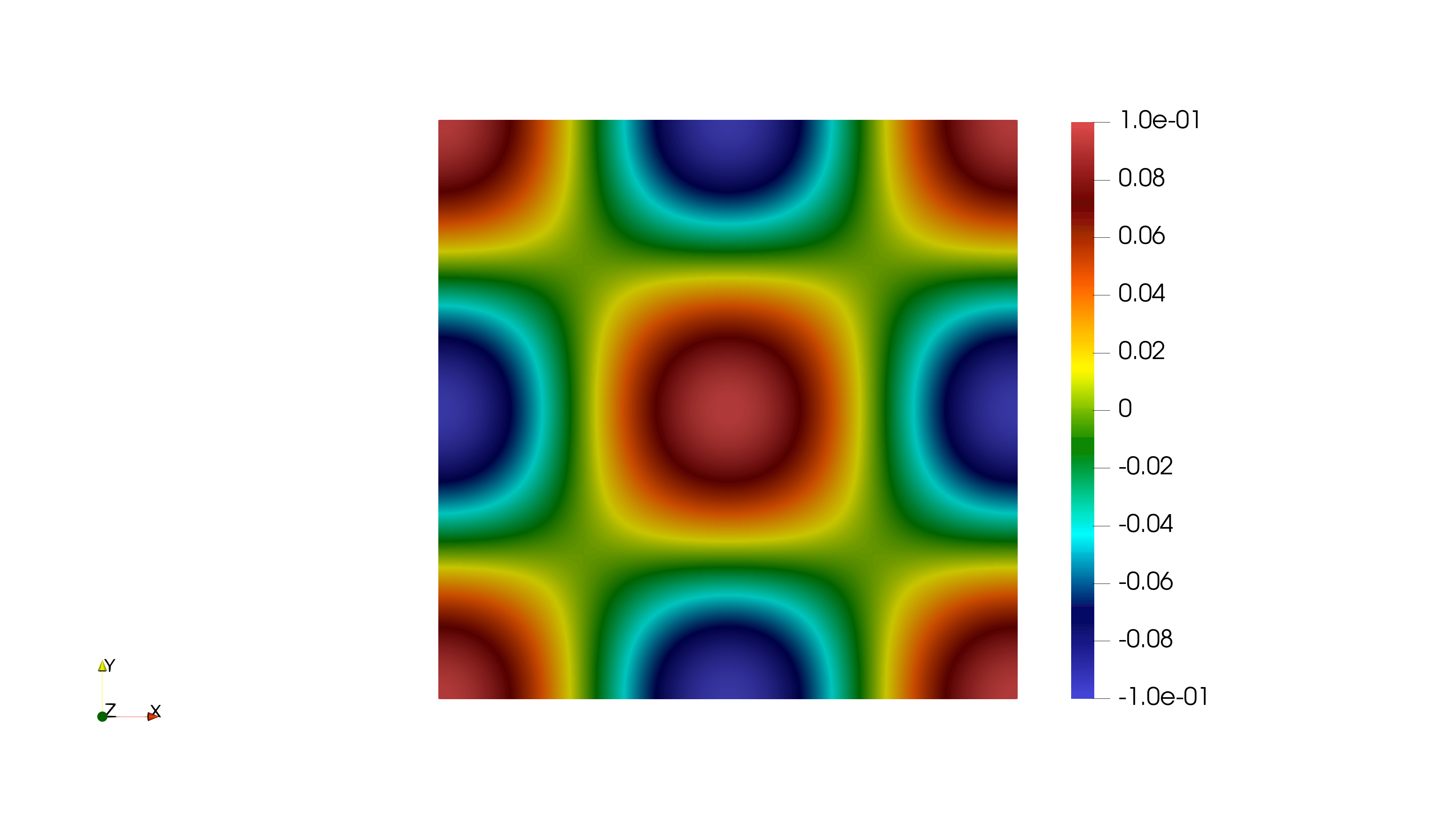}\\[-.1cm]
$t=1$ &\includegraphics[trim={39.0cm 10.0cm 22.0cm 10.0cm},clip,scale=0.055]{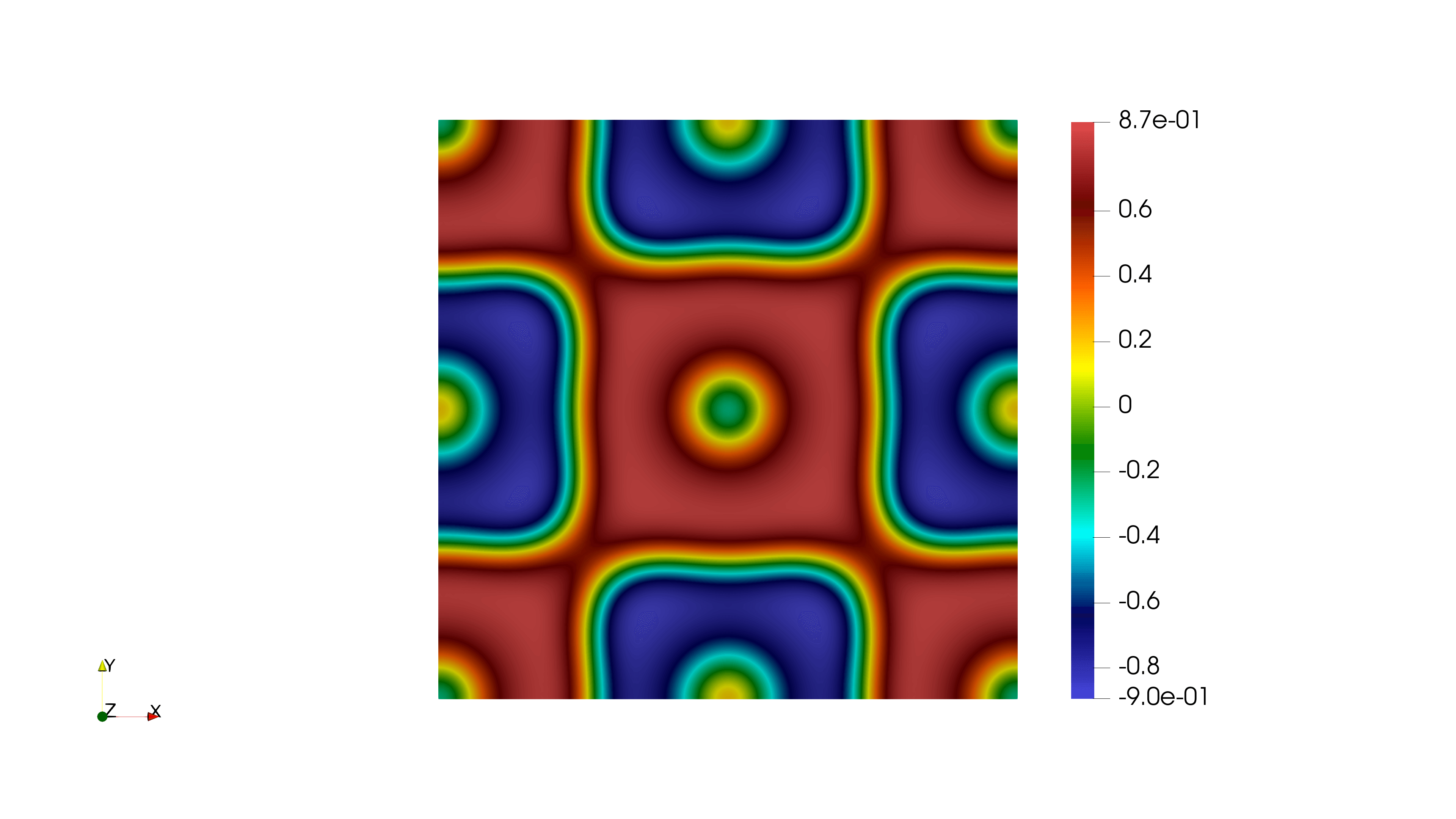}
&\includegraphics[trim={39.0cm 10.0cm 22.0cm 10.0cm},clip,scale=0.055]{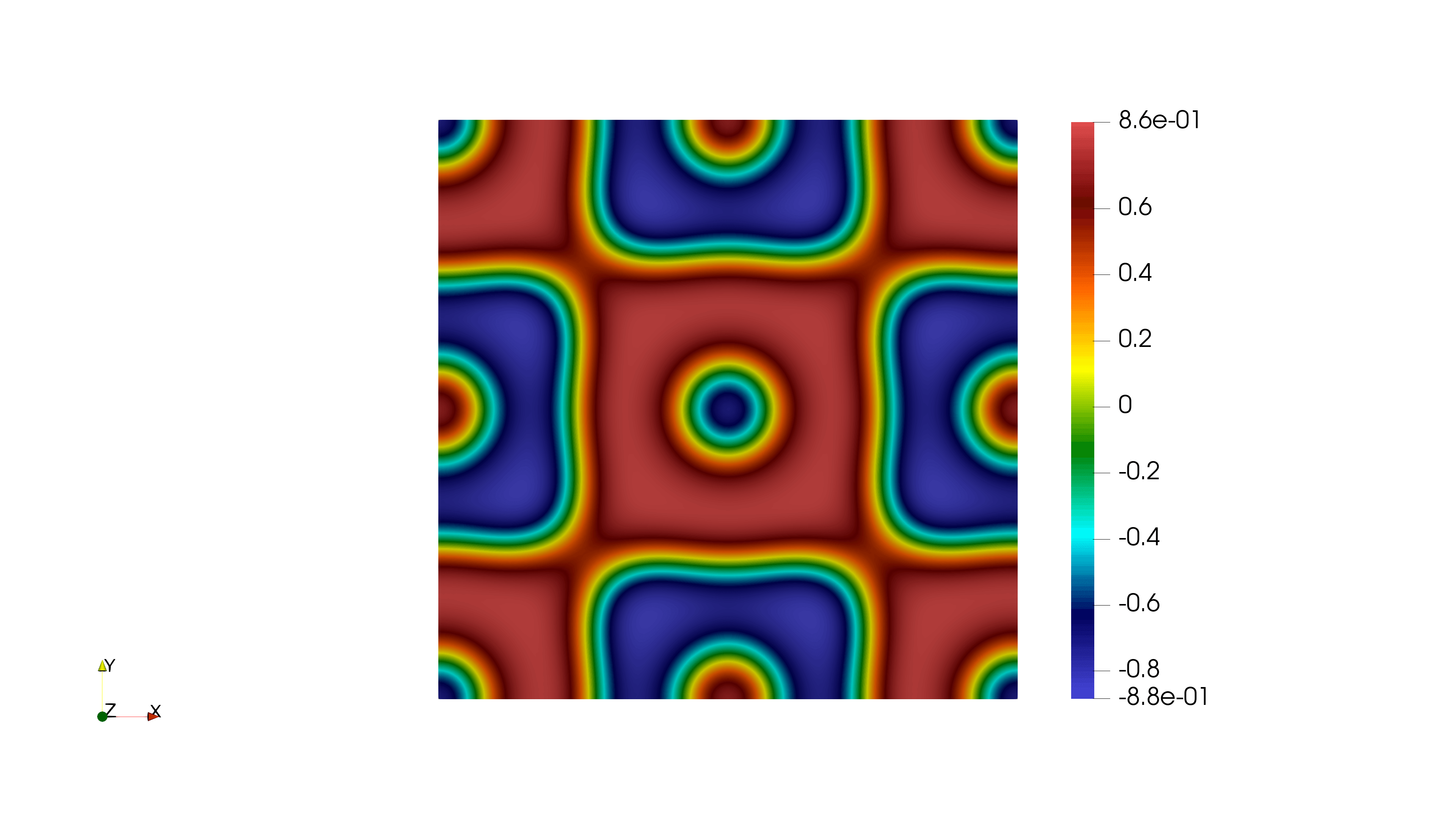}
&\includegraphics[trim={39.0cm 10.0cm 22.0cm 10.0cm},clip,scale=0.055]{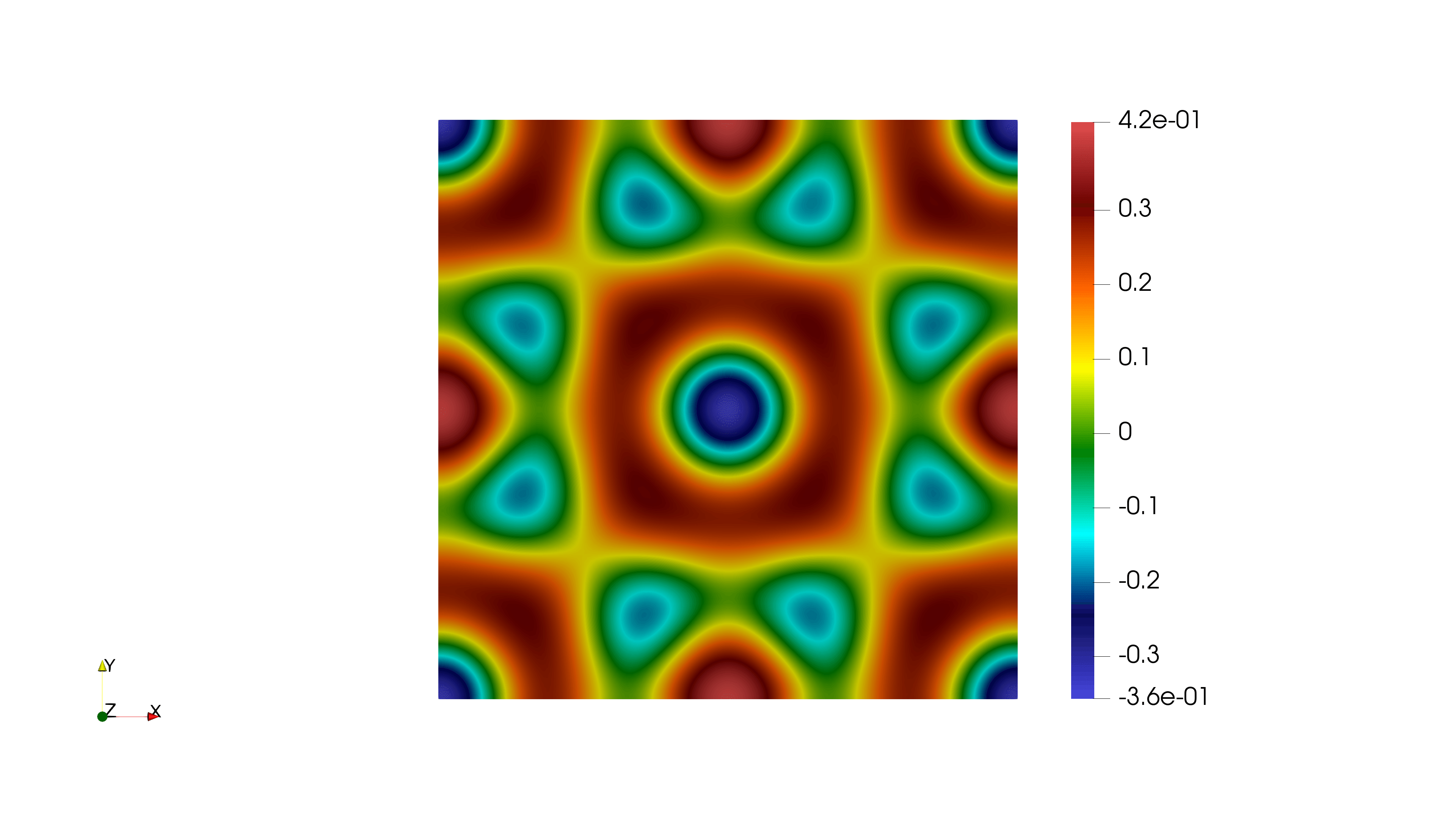}\\[-.1cm]
$t=2$ &\includegraphics[trim={39.0cm 10.0cm 22.0cm 10.0cm},clip,scale=0.055]{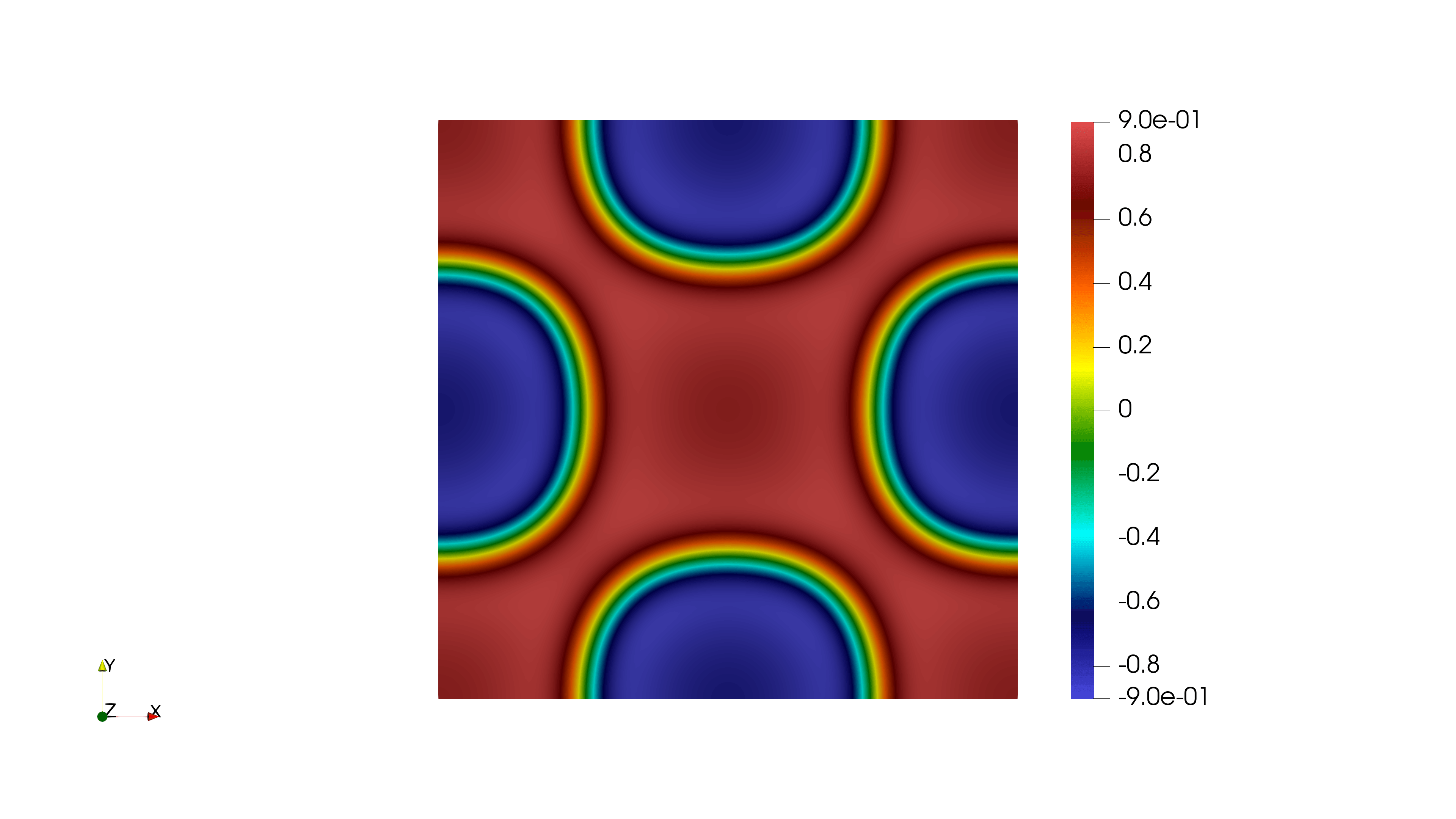}
&\includegraphics[trim={39.0cm 10.0cm 22.0cm 10.0cm},clip,scale=0.055]{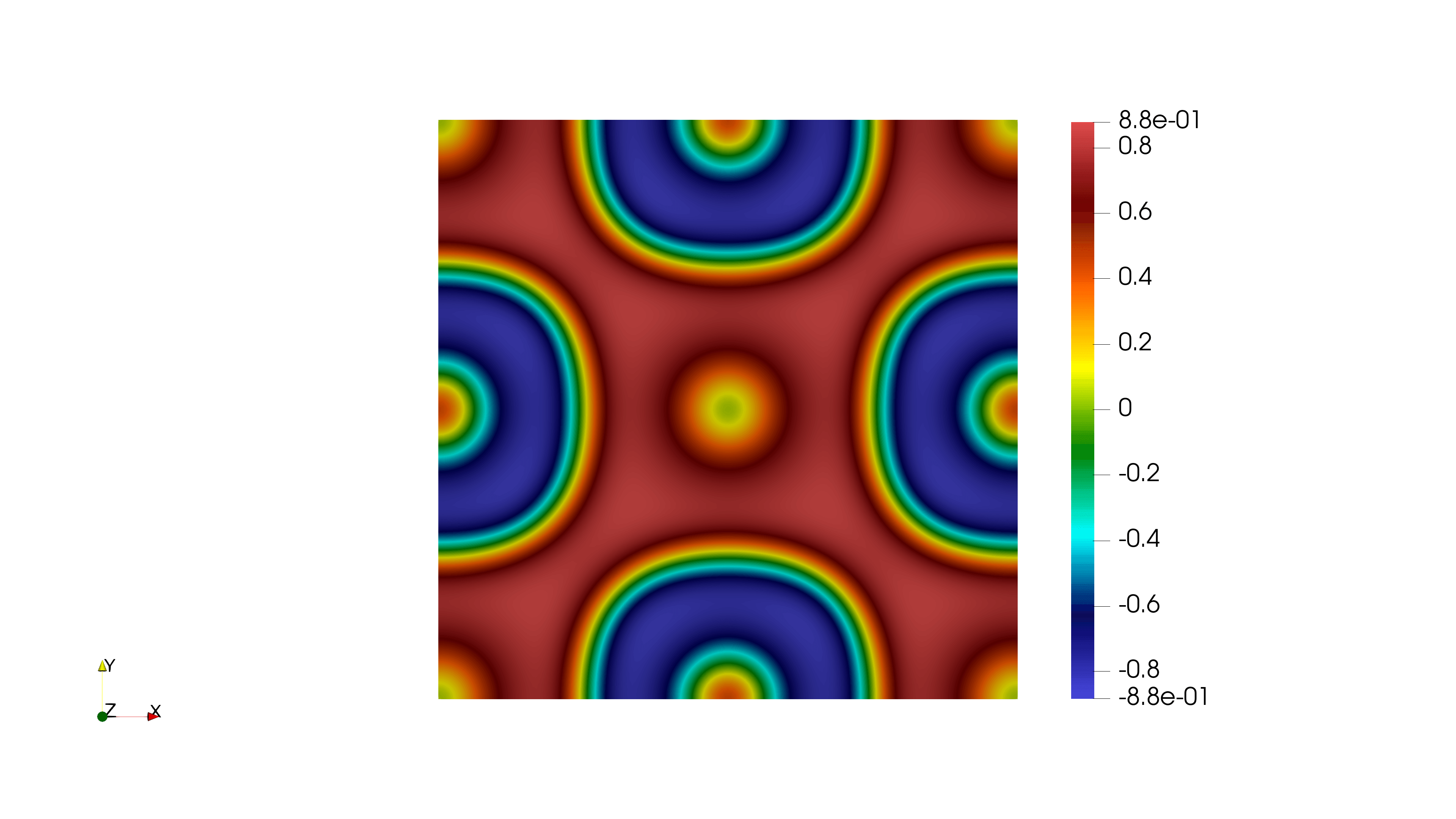}
&\includegraphics[trim={39.0cm 10.0cm 22.0cm 10.0cm},clip,scale=0.055]{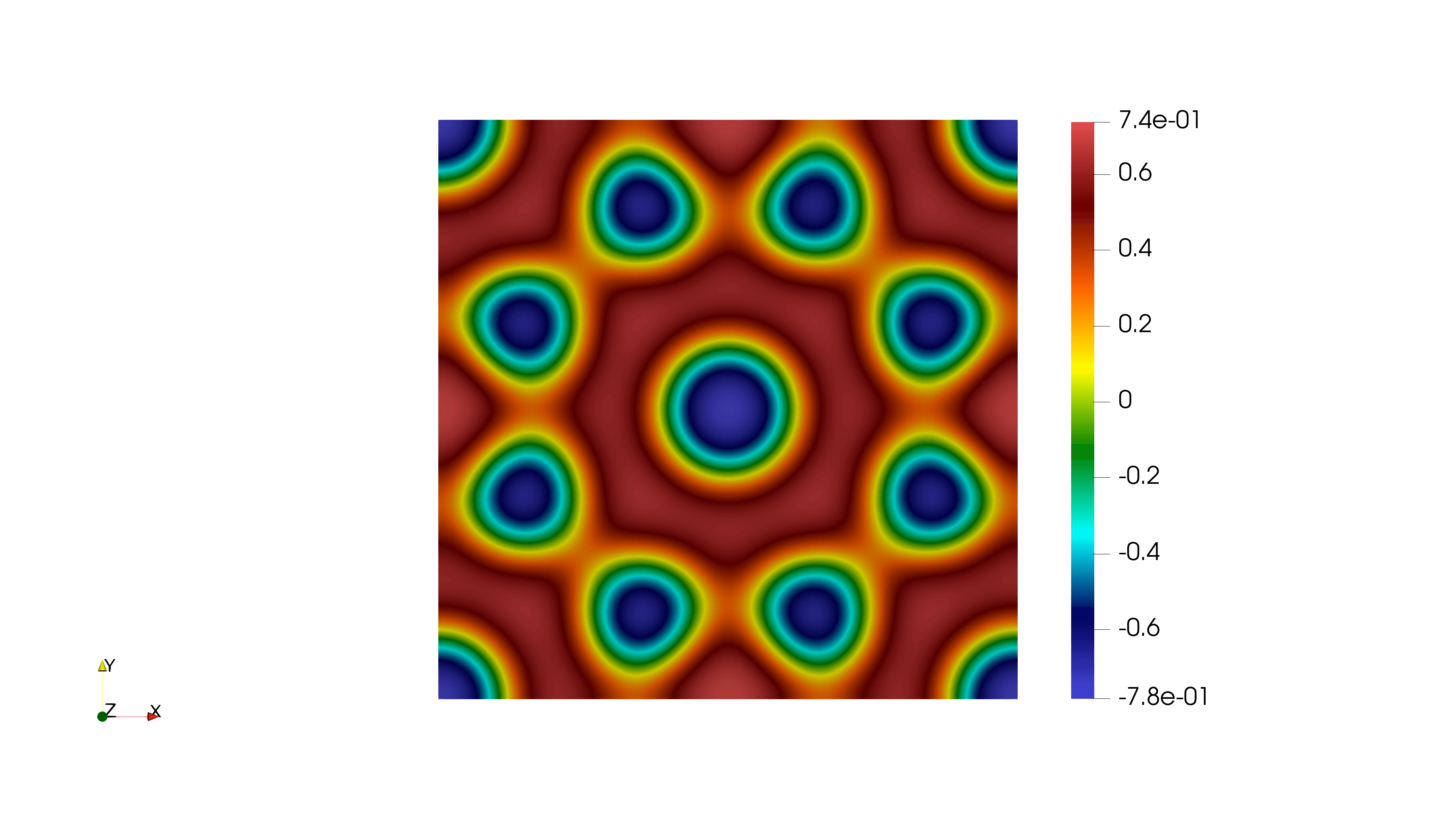}\\[-.1cm]
$t=5$ &\includegraphics[trim={39.0cm 10.0cm 22.0cm 10.0cm},clip,scale=0.055]{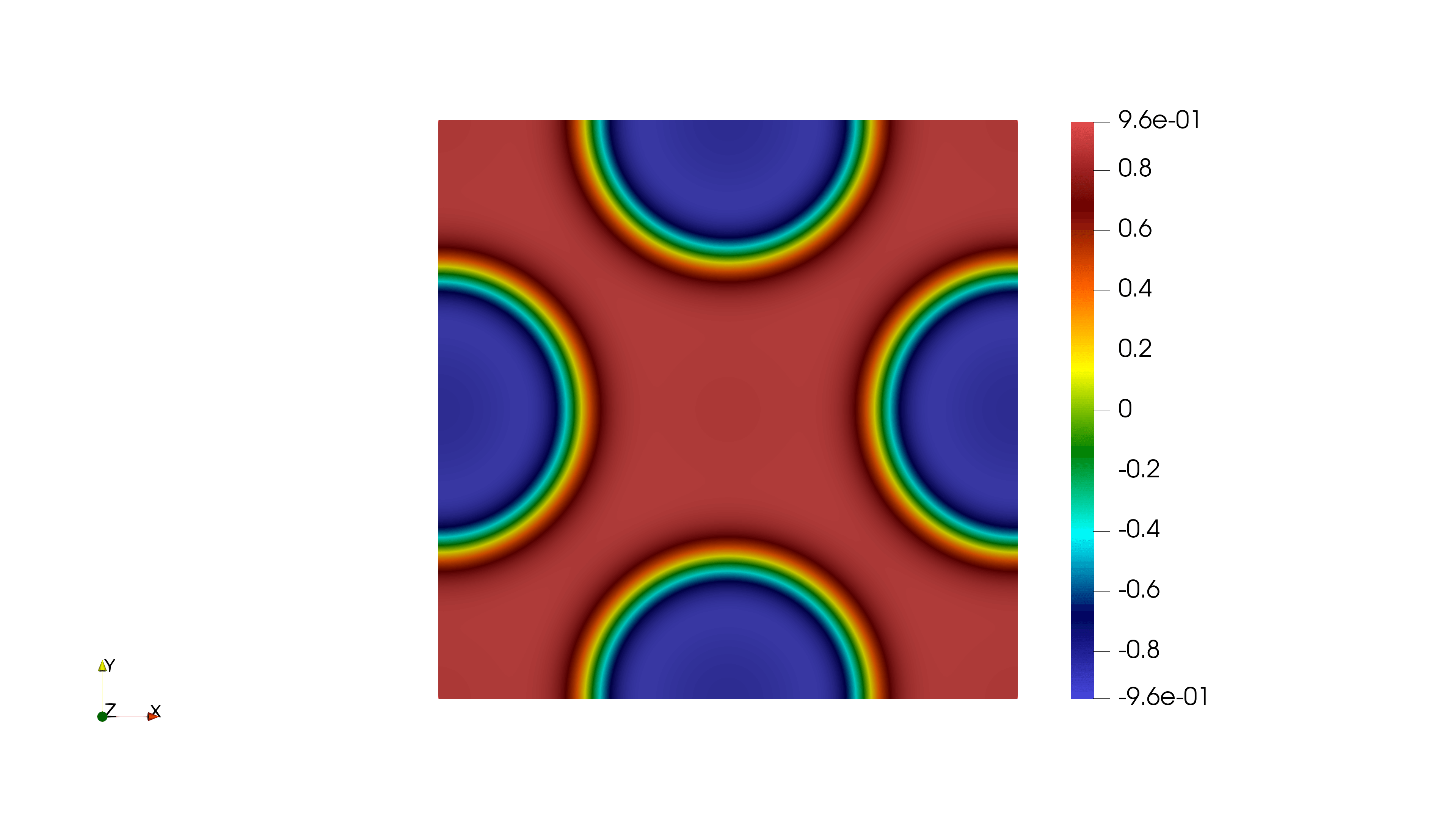}
&\includegraphics[trim={39.0cm 10.0cm 22.0cm 10.0cm},clip,scale=0.055]{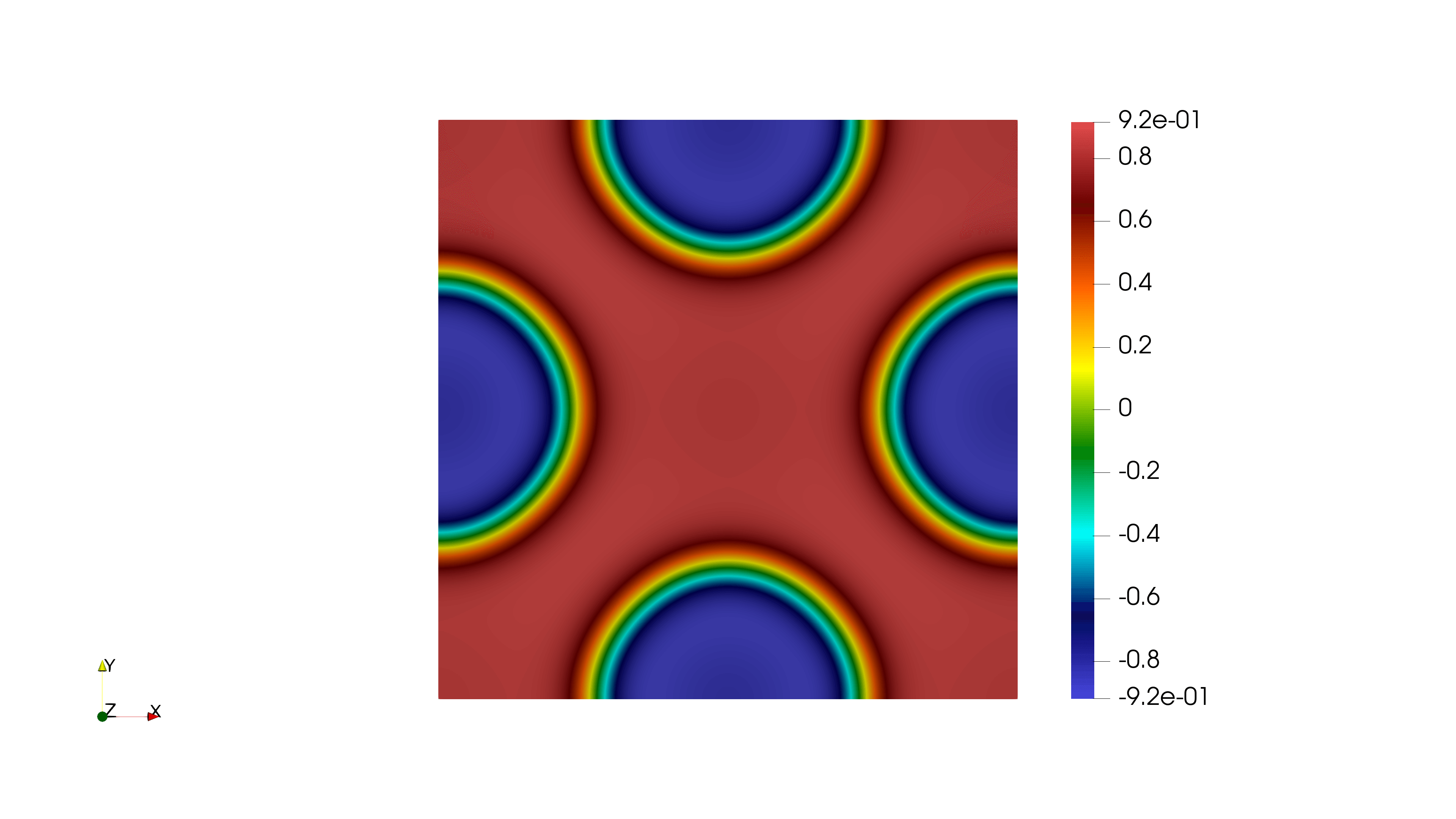}
&\includegraphics[trim={39.0cm 10.0cm 22.0cm 10.0cm},clip,scale=0.055]{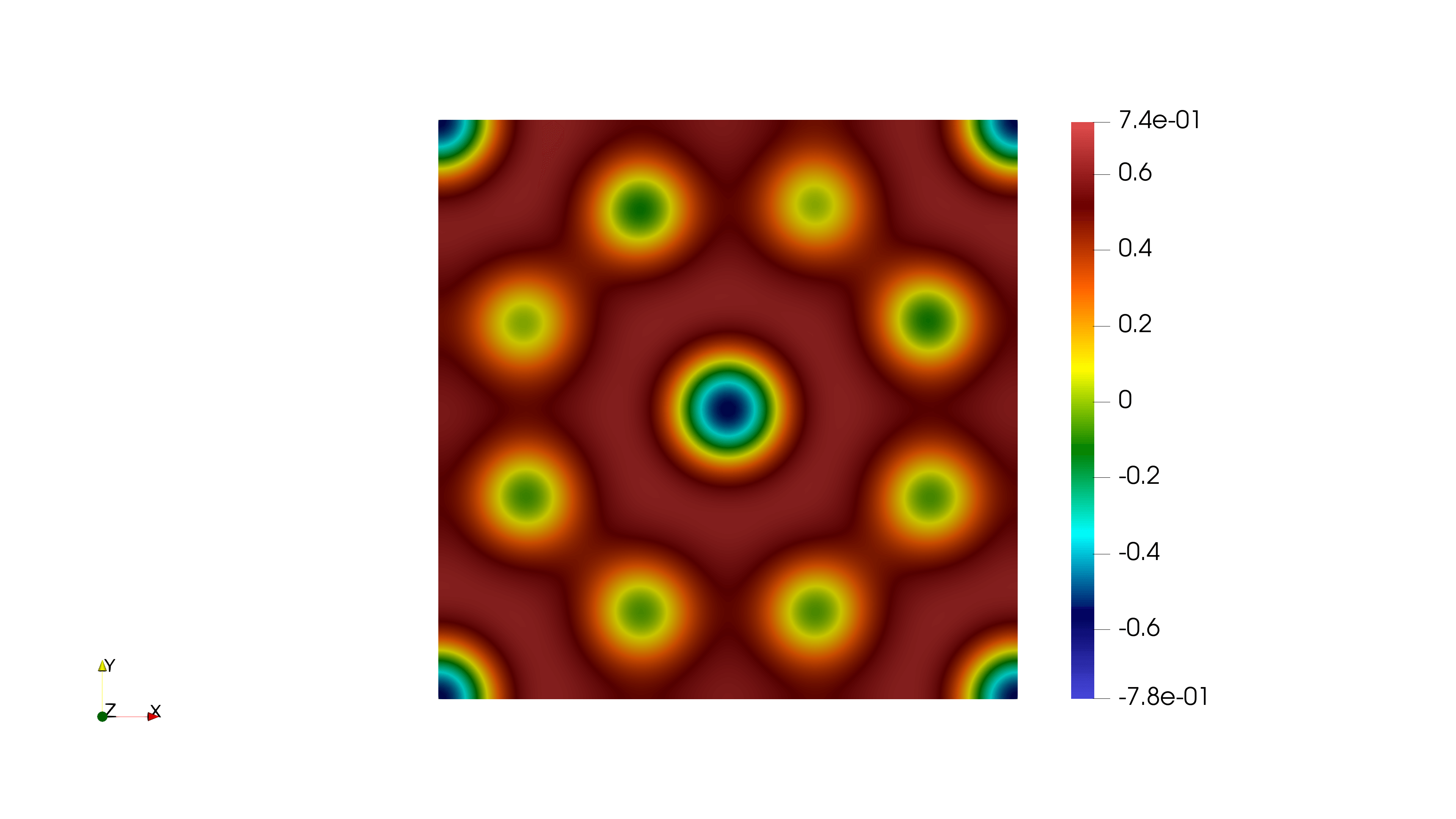}
\end{tabular}
\end{center}
\caption{\cref{exp:2} for the Ohta--Kawasaki model: Snapshots of the phase-field $\phi$ for $\kappa\in\{0,10,100\}$ (left to right) at times $t\in\{0,1,2,5\}$ (top to bottom).  \label{pic:exp2}}
\end{figure}

\begin{figure}[htbp!]
\centering
\includegraphics[trim={0.0cm 0.0cm 0.0cm 0.0cm},clip,scale=0.45]{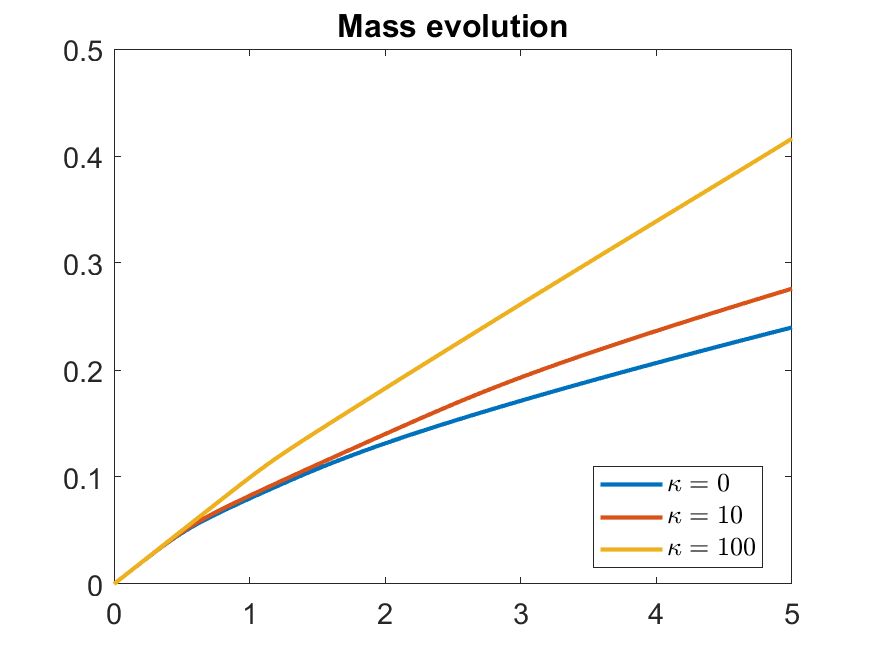}
\includegraphics[trim={0.0cm 0.0cm 0.0cm 0.0cm},clip,scale=0.45]{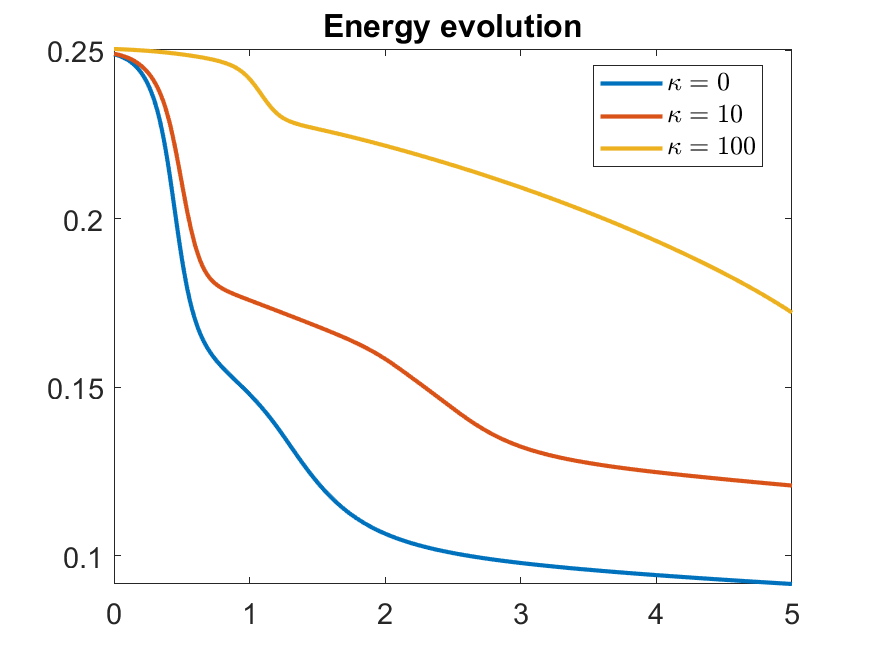}
\caption{Experiment \ref{exp:2} for the Ohta--Kawasaki model: Evolution of total mass (left) and the energy $\mathcal{E}$ (right).  \label{pic:exp2struc}}
\end{figure}

\noindent\textbf{Convergence test.}
In the case $\kappa=100$ in \cref{exp:2}, we performed a convergence test up to $T=1$. Since no exact solution is available, we compute the error using the finest space resolution as a reference solution. The error quantities we consider are given as:
\begin{align*}
   \text{err}(\phi,h_k)&:= \max_{n\in\Itau}\norm{\phi^n_{h_k}-\phi^n_{\text{ref}}}_V^2 , \\
   \text{err}(\mu,h_k)&:=\tau\sum_{n=1}^{n_T}\norm{\mu^n_{h_k}-\mu^n_{\text{ref}}}_0^2, \\
   \text{err}(\nu,h_k)&:=\max_{n\in\Itau}\norm{\nu^n_{h_k}-\nu^n_{\text{ref}}}_V^2.
\end{align*}
To this end, we consider mesh refinements with the mesh sizes $h_k \approx 2^{-(k+1)}$ for $k=0,\ldots,7$ and fix $h_{\text{ref}}\approx2^{-8}$. Regarding the size of the time step, we choose $\tau=10^{-3}$. The errors and approximate convergence rates are presented in \cref{tab:rates1}, demonstrating optimal first-order convergence in the standard energy norms. These results validate the accuracy and consistency of the numerical scheme with the theoretical expectations of this model.

\begin{table}[htbp!]
\centering
\small
\caption{Error and rates for \cref{exp:2} in the case $\kappa=100$.\label{tab:rates1}} 
\begin{tabular}{c||c|c|c|c|c|c}
$ k $ & $\text{err}(\phi,h_k)$ &  eoc & $\text{err}(\mu,h_k)$ & eoc & $\text{err}(\eta,h_k)$ &  eoc   \\
\hline\hline
1 & $5.68\cdot 10^{+1}$ & --       &  $2.54\cdot 10^{-1}$ & --       & $3.89\cdot 10^{-4}$ & --  \\
2 & $1.06\cdot 10^{+2}$ & -0.91    &  $3.84\cdot 10^{-1}$ & -0.60    & $5.81\cdot 10^{-4}$ & -0.58 \\
3 & $5.56\cdot 10^{+1}$ & \phm0.94 &  $2.29\cdot 10^{-1}$ & \phm0.74 & $2.91\cdot 10^{-4}$ & \phm1.00\\
4 & $1.02\cdot 10^{+1}$ & \phm2.44 &  $4.19\cdot 10^{-2}$ & \phm2.45 & $4.83\cdot 10^{-5}$ & \phm2.59  \\
5 & $1.69\cdot 10^{+0}$ & \phm2.59 &  $7.37\cdot 10^{-3}$ & \phm2.51 & $8.38\cdot 10^{-6}$ & \phm2.52  \\
6 & $3.97\cdot 10^{-1}$ & \phm2.09 &  $1.68\cdot 10^{-3}$ & \phm2.13 & $2.04\cdot 10^{-6}$ & \phm2.04  \\
7 & $9.87\cdot 10^{-2}$ & \phm2.01 &  $4.14\cdot 10^{-4}$ & \phm2.02 & $5.13\cdot 10^{-7}$ & \phm1.99  \\
\end{tabular}
\end{table}

\subsection{3D experiment}

In this subsection, we consider a three-dimensional example and compare the well-established Cahn--Hilliard equation to the Ohta--Kawasaki model.

\begin{experiment}\label{exp:3} 
$\phi_0(x)=-0.1 + \mathcal{U}(x,y,-10^{-3},10^{-3})$, \quad $f(\phi)=\dfrac{1}{10}\max\{0,1-\phi^2\}$, \quad $\kappa\in\{0,100\}$.
\end{experiment}

\begin{figure}[htbp!]
\begin{center}
\begin{tabular}{cM{.3\textwidth}M{.3\textwidth}c}
			&$\kappa=0$&$\kappa=100$& \\
$t=0.5$ &\includegraphics[trim={41.0cm 7.5cm 41.0cm 12.5cm},clip,scale=0.08]{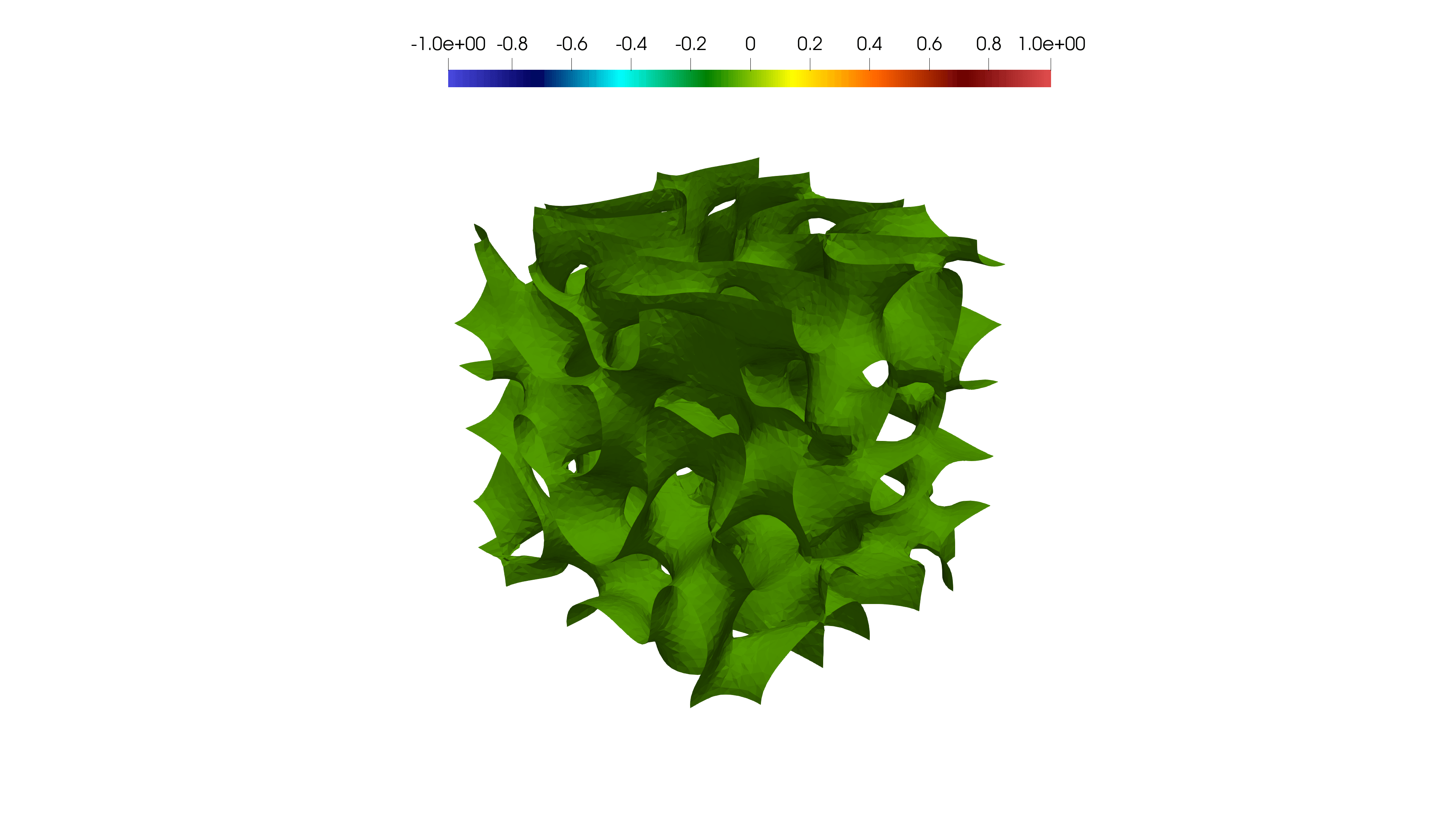}
&\includegraphics[trim={35.0cm 2.0cm 35.0cm 6.1cm},clip,scale=0.065]{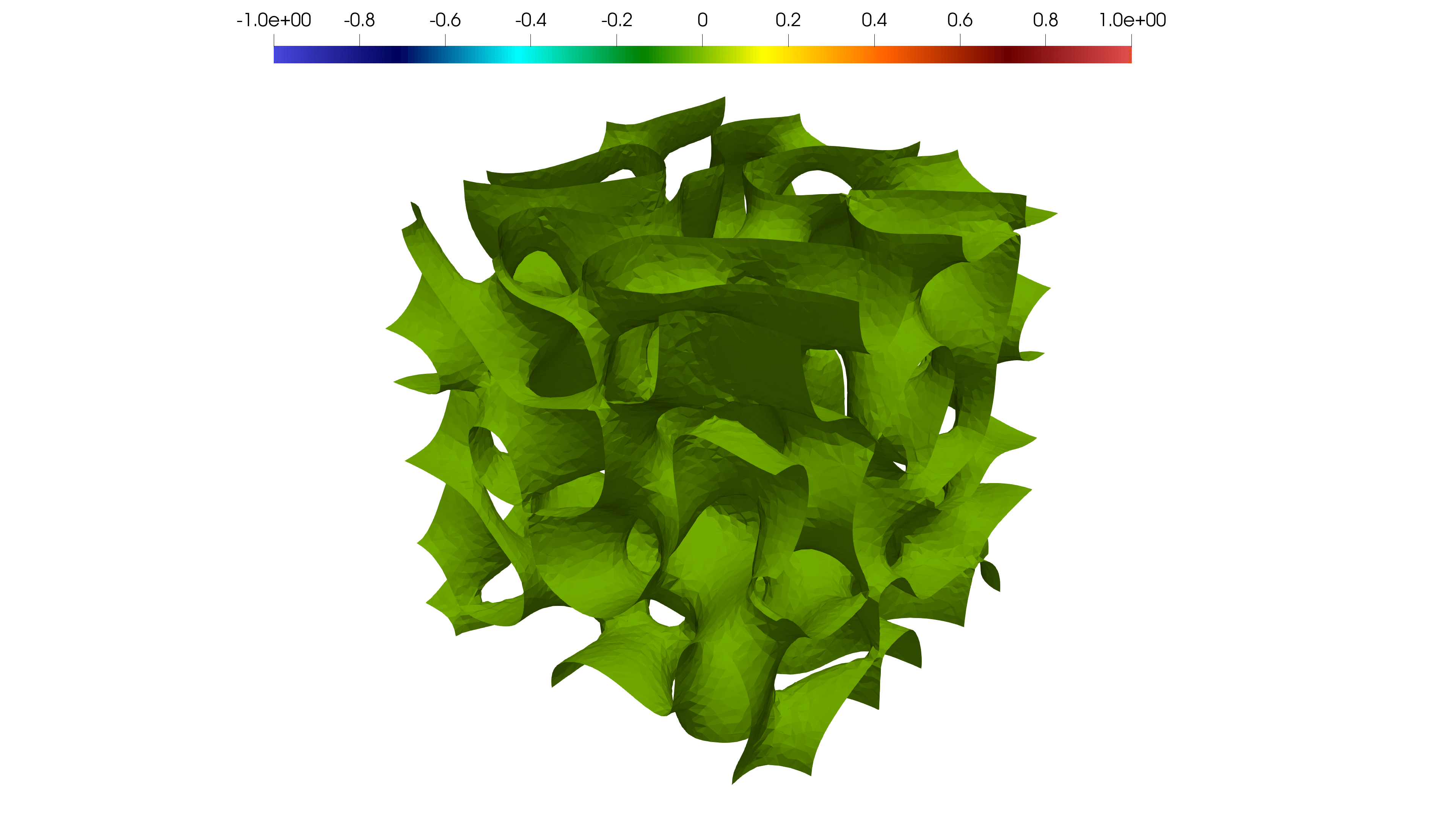} &$t=1$\\[.1cm]
$t=1$ &\includegraphics[trim={41.0cm 7.5cm 41.0cm 12.5cm},clip,scale=0.08]{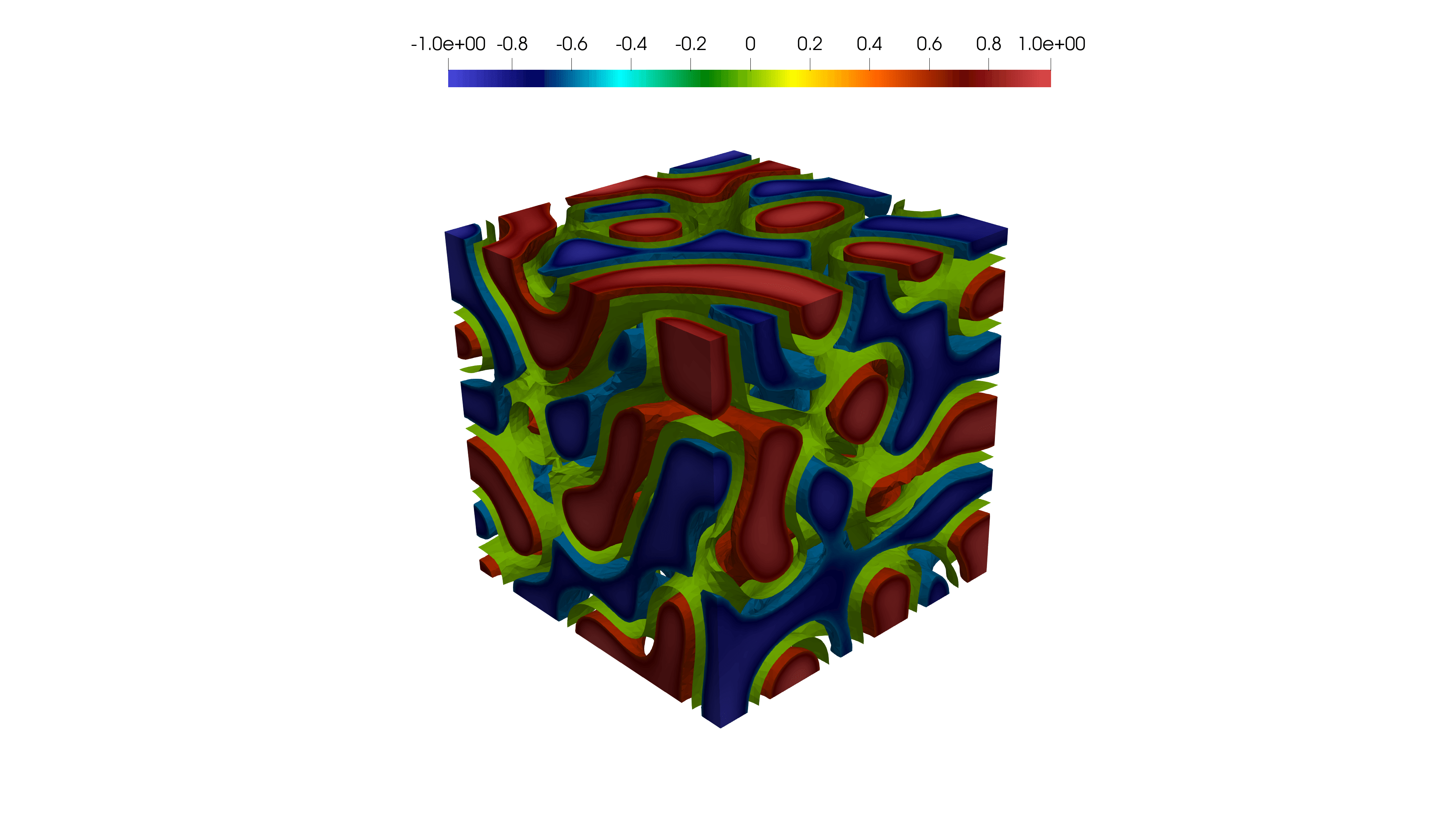}
&\includegraphics[trim={35.0cm 2.0cm 35.0cm 6.1cm},clip,scale=0.065]{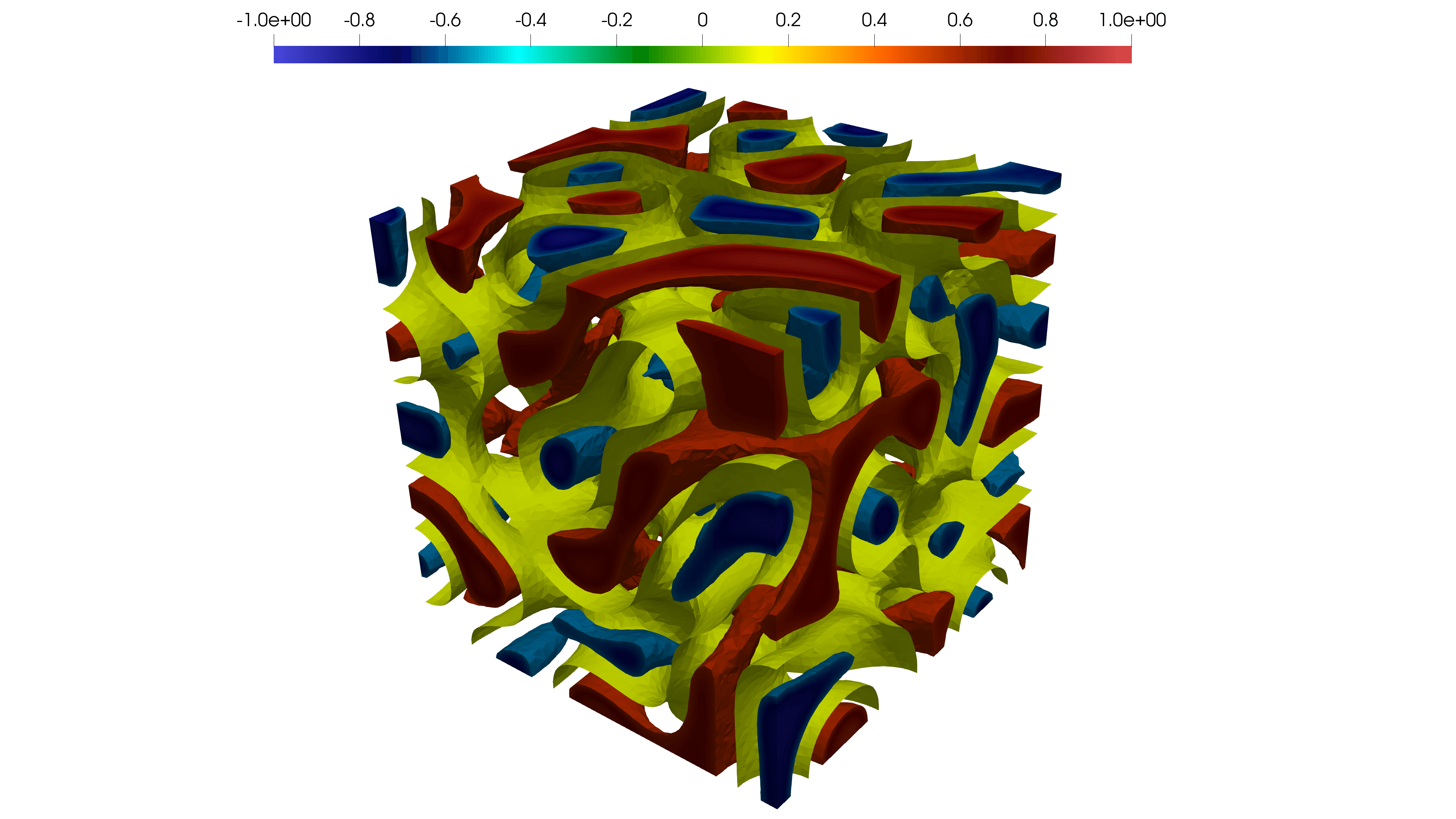} &$t=2$\\[.1cm]
$t=3$ &\includegraphics[trim={41.0cm 7.5cm 41.0cm 12.5cm},clip,scale=0.08]{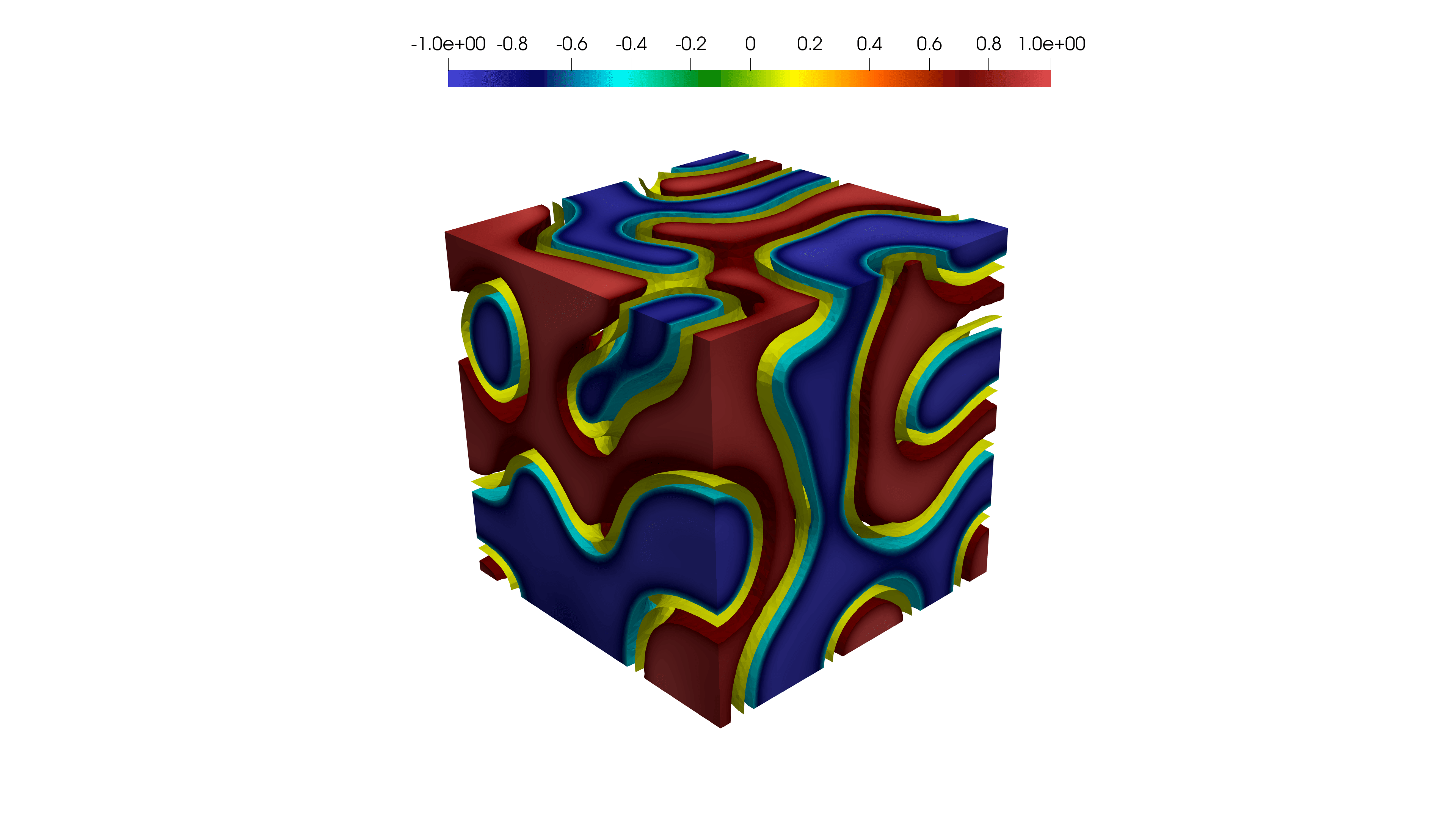}
&\includegraphics[trim={35.0cm 2.0cm 35.0cm 6.1cm},clip,scale=0.065]{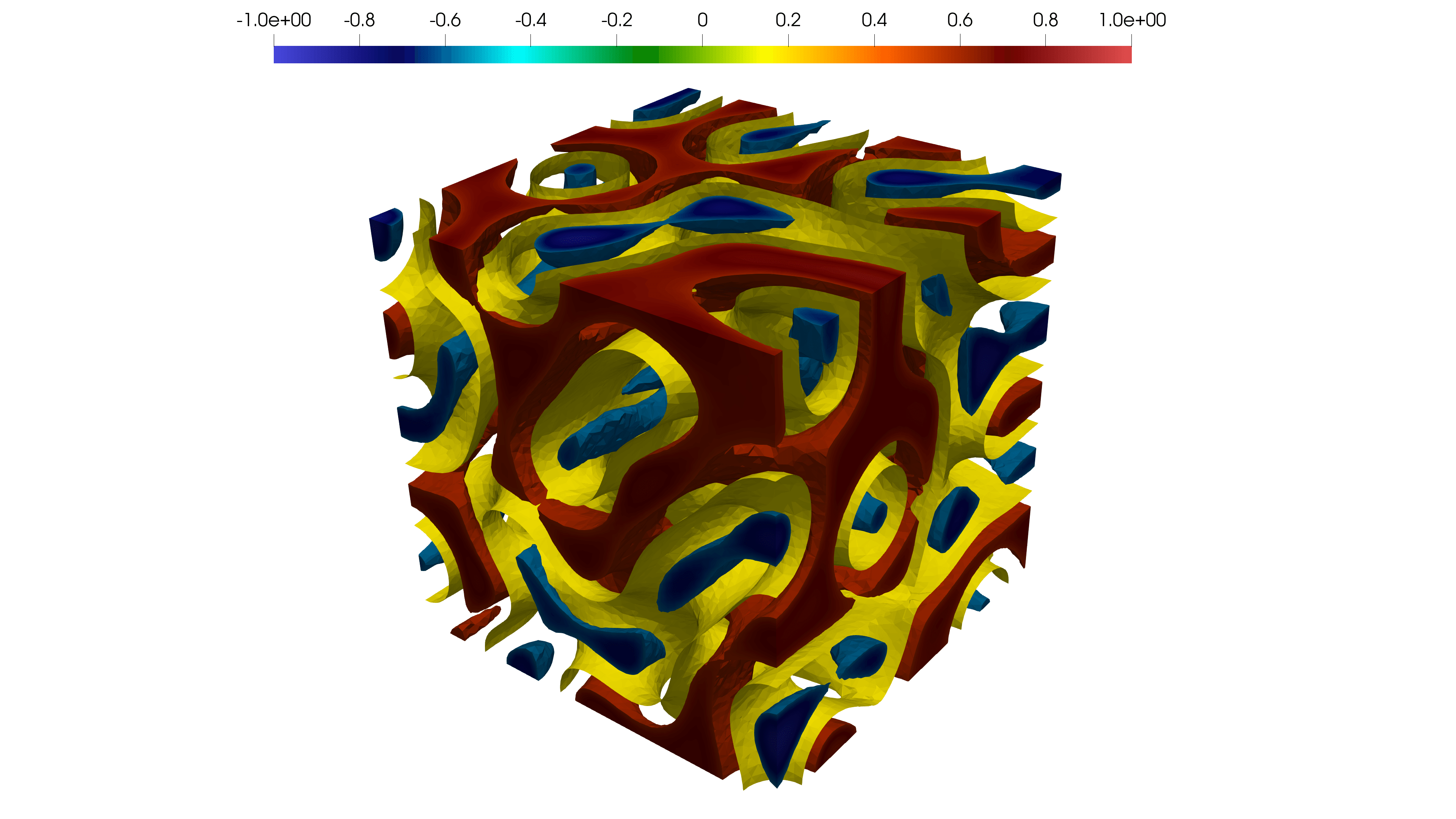} &$t=3$\\[.1cm]
$t=5$ &\includegraphics[trim={39.0cm 5.5cm 39.0cm 10.5cm},clip,scale=0.075]{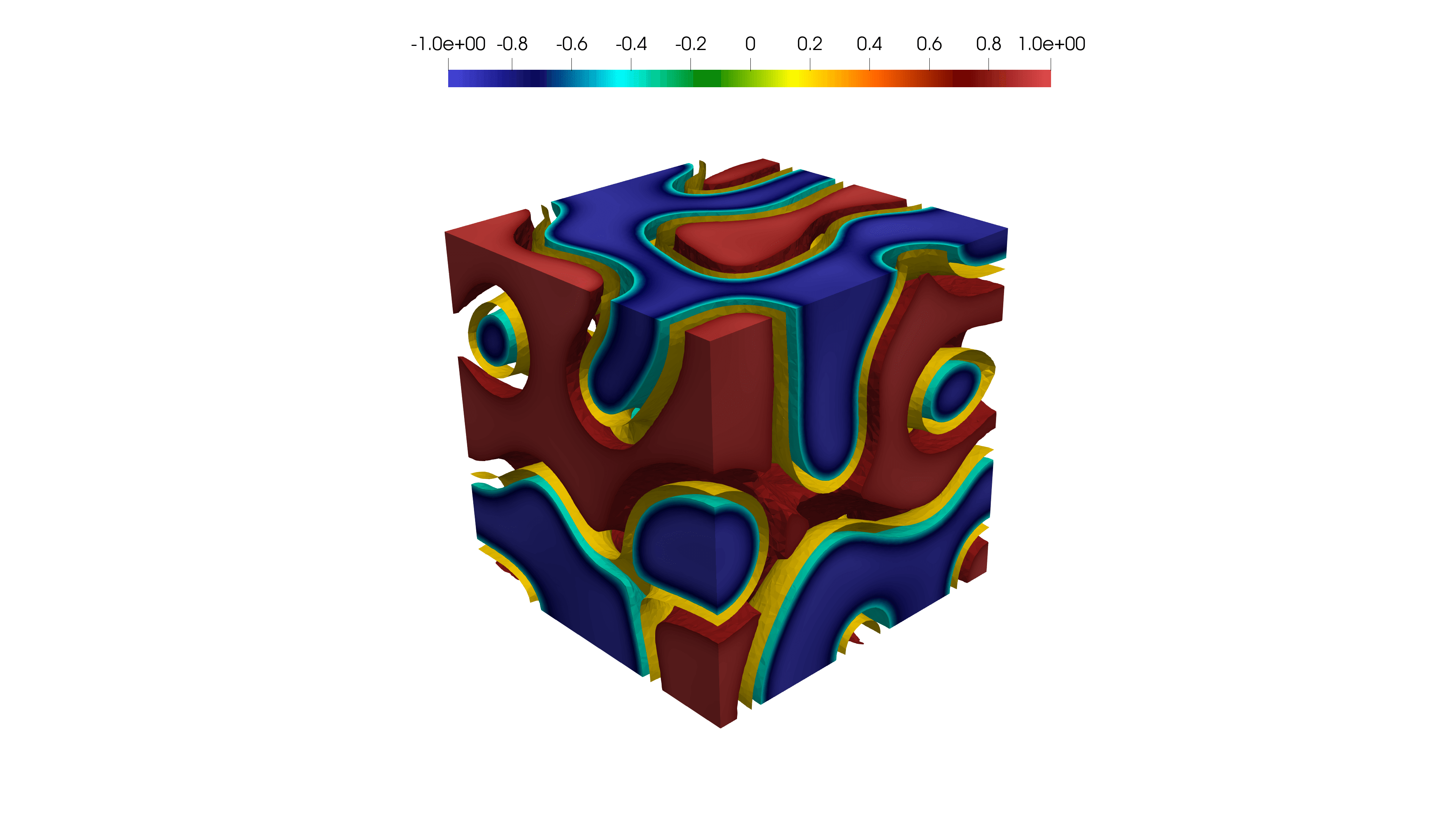}
&\includegraphics[trim={35.0cm 2.0cm 35.0cm 6.1cm},clip,scale=0.065]{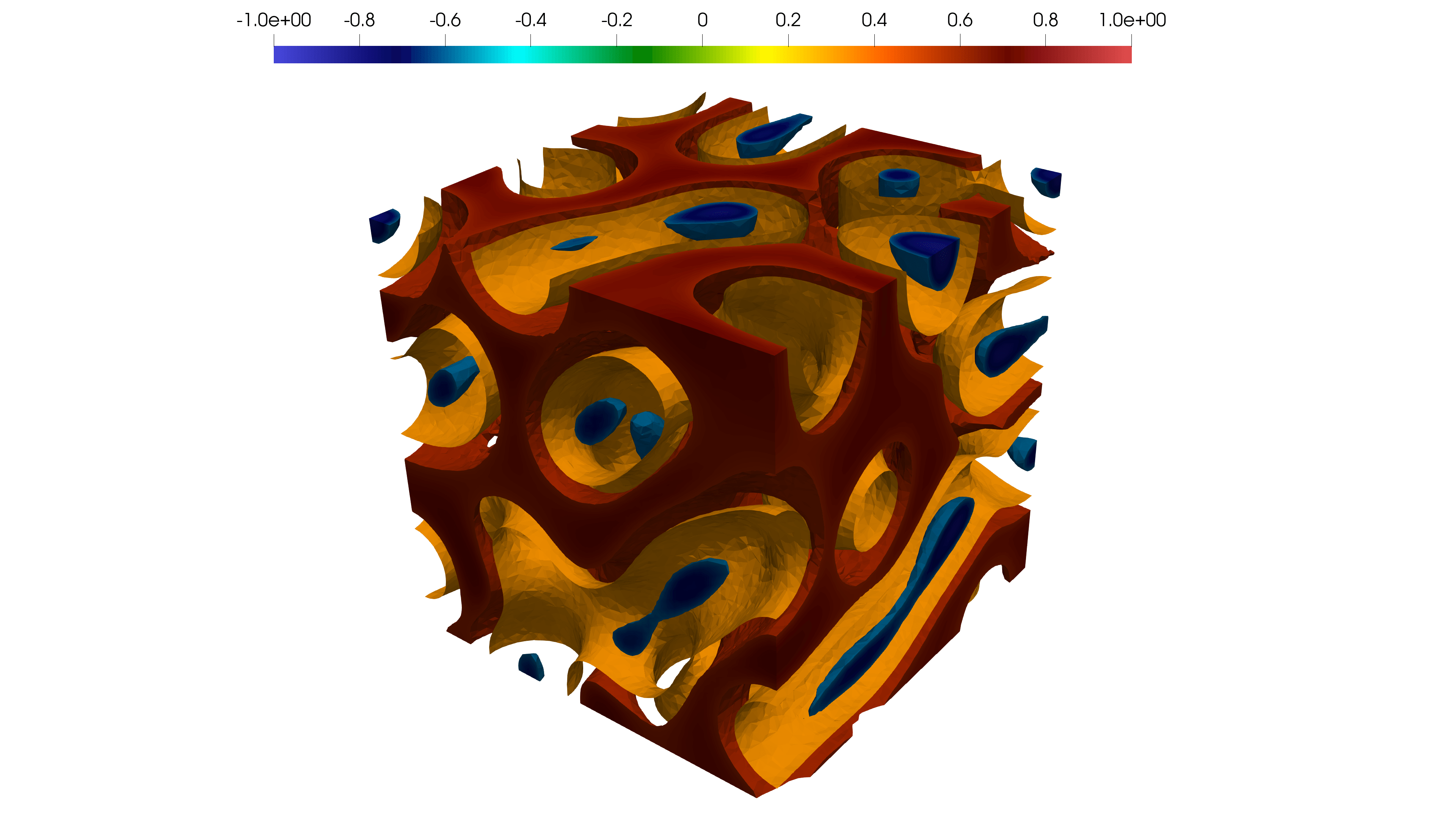} &$t=5$
\end{tabular}
\includegraphics[trim={22.0cm 70.0cm 27.0cm 0cm},clip,scale=0.15]{pics/3D_100.0100.png} 
\end{center}
\caption{\cref{exp:3} for the Ohta--Kawasaki model: Snapshots of the phase-field $\phi$ for $\kappa\in\{0,100\}$. Left: $\kappa=0$ at times $t\in\{0.5,1,3,5\}$. Right: $\kappa=100$ at the times $t\in\{1,2,3,5\}$.  \label{pic:exp3}}
\end{figure}

The temporal evolution of the phase-field \(\phi\) is depicted in \cref{pic:exp3} using isosurfaces located at \(\phi \in \{-0.6, \bar{\phi}, 0.6\}\). For the case \(\kappa=0\), we observe typical phase separation driven by the Cahn--Hilliard dynamics. The mean value \(\bar{\phi}\) increases from \(-0.1\) to approximately \(0.2\), as shown in the plot on the left in \cref{pic:exp3struc}. In the Ohta--Kawasaki system with a repulsion strength of \(\kappa=100\), the mean phase-field value \(\bar{\phi}\) increases further, from \(\bar{\phi} \approx -0.1\) to around \(0.3\) over time. The phase separation process remains notably slow and even after \(t=5\), the separation has not progressed to the same extent as observed for \(\kappa=0\) at \(t=3\).
As illustrated in the right plot of \cref{pic:exp3struc}, the energy in both cases initially increases, reaching a peak at \(t \approx 0.5\) for \(\kappa=0\) and at \(t \approx 1.3\) for \(\kappa=100\). Thereafter, the energy begins to decay over time. In the Cahn--Hilliard case, this decay is rapid and almost instantaneous following the initial increase driven by interface minimization. In contrast, for \(\kappa = 100\), the energy decay is nearly linear, and even by \(t=5\), the energy level has not yet matched that of the Cahn--Hilliard case. 

\begin{figure}[htbp!]
\centering
\includegraphics[trim={0.0cm 0.0cm 0.0cm 0.0cm},clip,scale=0.45]{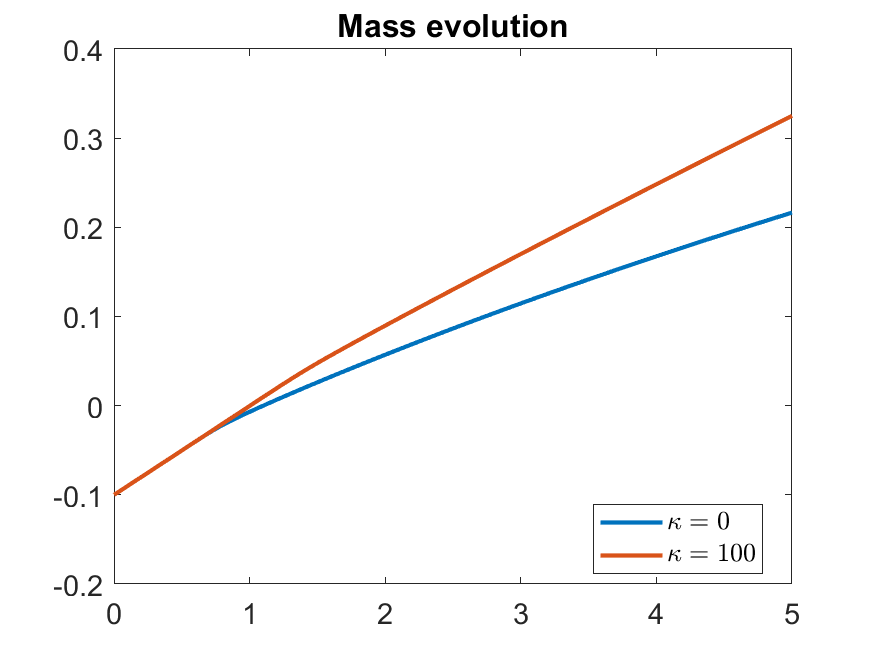}
\includegraphics[trim={0.0cm 0.0cm 0.0cm 0.0cm},clip,scale=0.45]{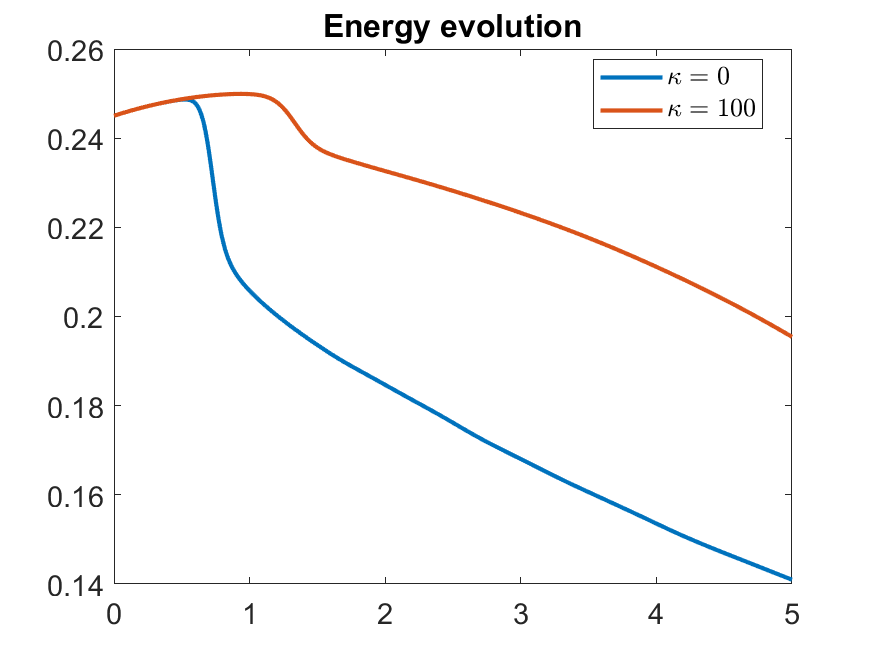}
\caption{\cref{exp:3} for the Ohta--Kawasaki model: Evolution of total mass (left) and the energy $\mathcal{E}$ (right).} \label{pic:exp3struc}
\end{figure}
\section{Conclusion} \label{Sec:Concl}

In this study, we investigated the Ohta--Kawasaki model for the microphase separation of diblock copolymers, focusing particularly on the effects of the repulsion parameter \(\kappa\) and the inclusion of degenerate mobility and external force. We proved the existence of a weak solution in the case of degenerate mobility and external force. This was done by an approximation scheme by lifting the mobility up, allowing us to first consider the case of non-degenerating mobility.  Our numerical findings reveal that \(\kappa\) plays a critical role in the dynamics and structure of phase separation, directly influencing both the rate and pattern of the resulting phases. For small values of \(\kappa\), the behavior of the system closely resembles that of the classical Cahn--Hilliard model, with rapid phase separation and a swift energy decay as the system minimizes interfacial energy. In contrast, larger values of \(\kappa\) slow down the phase separation process significantly, dampening energy dissipation and promoting a stable structure with a higher number of smaller phases, suggesting that \(\kappa\) effectively regulates the stability and morphology of separated phases.


\section*{Acknowledgements}
\noindent M.~Fritz is supported by the state of Upper Austria. A.Brunk~gratefully acknowledges the support of the German Science Foundation (DFG) via TRR~146 (project~C3) and SPP2256 Project Number 441153493 and the support by the Gutenberg Research College, JGU Mainz.

\bibliographystyle{abbrv}
\bibliography{literature.bib}

\end{document}